
\documentclass[reqno,a4paper,11pt]{amsart}

\usepackage{amsmath,amssymb,amsthm,mathrsfs,comment,enumerate,color}
\usepackage{bbm}

\numberwithin{equation}{section}

\newcommand{\1}{\mathbbm {1}}
\newcommand{\id}{\operatorname{Id}}
\newcommand{\dom}{\operatorname{dom}}

\newcommand{\ud}{\mathrm{d}}

\newcommand{\A}{\mathcal{A}}
\newcommand{\D}{\mathcal{D}}

\renewcommand{\mathbb}{\mathbbm}   
\newcommand{\N}{\mathbb{N}}
\newcommand{\R}{\mathbb{R}}

\newcommand{\E}{\mathbb{E}}

\newtheorem{defin}{Definition}[section]
\newtheorem{prop}[defin]{Proposition}
\newtheorem{theo}[defin]{Theorem}
\newtheorem{coroll}[defin]{Corollary}
\newtheorem{lemma}[defin]{Lemma}
\theoremstyle{definition}

\newtheorem{example}[defin]{Example}

\renewcommand{\le}{\leqslant}
\renewcommand{\ge}{\geqslant}
\renewcommand{\phi}{\varphi}

\newcommand{\scapro}[2]{\langle #1,#2\rangle}       

\renewcommand{\L}{{\mathcal L}}
\newcommand{\M}{\widehat{\mathcal M}}
\newcommand{\norm}[1]{\left\lVert #1 \right\rVert}   
\newcommand{\abs}[1]{\left\lvert #1 \right\rvert}    
\DeclareMathOperator{\cs}{\mathfrak Z}

\title[Cylindrical fractional Brownian motion]{Cylindrical fractional Brownian motion\\ in Banach spaces}

\author[E. Issoglio]{Elena Issoglio}
\address[E. Issoglio and M. Riedle]{Department of Mathematics\\
King's College\\
London WC2R 2LS\\
United Kingdom }
\email[E. Issoglio]{elena.issoglio@kcl.ac.uk}
\author[M. Riedle]{Markus Riedle}
\email[M. Riedle]{markus.riedle@kcl.ac.uk}
\thanks{The second named author acknowledges the EPSRC grant EP/I036990/1}

\keywords{Cylindrical fractional Brownian motion, stochastic integration in Banach spaces, stochastic partial differential equations, fractional Ornstein-Uhlenbeck process, $\gamma$-radonifying, cylindrical measures}

\subjclass[2010]{Primary  60G22; Secondary 60H05, 60H15, 28C20}

\begin{document}

\begin{abstract}
In this article we introduce  cylindrical fractional Brownian motions in Banach spaces and develop the related stochastic integration theory. Here a cylindrical fractional Brownian motion is understood in the classical framework of cylindrical random variables and cylindrical measures. The developed stochastic integral for deterministic operator valued integrands is based on a series representation of the cylindrical fractional Brownian motion, which is analogous to the Karhunen-Lo{\`e}ve expansion for genuine stochastic processes. In the last part we apply our results to study  the abstract stochastic Cauchy problem in a Banach space driven by cylindrical fractional Brownian motion.
\end{abstract}

\maketitle

\section{Introduction}

In the past decades, a wide variety of infinite dimensional stochastic equations have been studied, due to their broad range of applications in physics, biology, neuroscience and in numerous other areas.
A comprehensive study of stochastic evolution equations in Hilbert spaces driven by cylindrical Wiener processes, based on a semigroup approach, can be found in the monograph of Da Prato and Zabczyk \cite{da-prato_zabczyk92}. Various extensions and modifications have been studied, such as different types of noises as well as generalisations to Banach spaces. For the latter see for example Brze\'{z}niak \cite{brzezniak97} and van Neerven et al.\ \cite{van-neerven_et.al.07, JanLutz2005}.

Fractional Brownian motion (fBm) has become very popular in recent years as
driving noise in stochastic equations, in particular as an alternative to the classical Wiener noise. This is mainly due to  properties of fBms, such as long-term dependence, which leads to a \emph{memory} effect, and \emph{self-similarity}, features which show great potential for applications, for example in hydrology, telecommunication traffic, queueing theory and mathematical finance.
Since  fBms are not  semi-martingales, It\^o-type calculus cannot be applied. Several different stochastic integrals with respect to real valued fBm have been introduced in the literature, e.g. Wiener integrals for deterministic integrands, Skorohod integrals using Malliavin calculus techniques, pathwise integrals using generalised Stieltjes integrals or integrals based on rough path theory. For more details see e.g. \cite{biagini_et.al.08, mishura08, nualart06fBm} and references therein.


The purpose of this paper is to begin a systematic study of cylindrical fractional Brownian motion in Banach spaces and, starting from this, to build up a related stochastic calculus in Banach spaces with respect to cylindrical fBm.  Our approach is based on cylindrical measures and cylindrical random variables which enables us to develop a theory that does not require a  Hilbert space structure of the underlying  space because the cylindrical fBm is defined through  finite dimensional projections. 
We can characterise the cylindrical fBm by a series representation, which can be considered as the analogue of the Karhunen-Lo{\`e}ve expansion in the classical situation of genuine stochastic processes. This representation is exploited to define the stochastic integral of deterministic, operator valued integrands with respect to a cylindrical fBm. The stochastic integral is defined as a stochastic version of a \emph{Pettis} integral, as accomplished in van Neerven and Weis \cite{JanLutz2005} for Wiener processes and in  Riedle and van Gaans \cite{riedle_vanGaans09} for L\'evy processes. As  the integrand is deterministic, the integral process is Gaussian  and therefore it is characterised by its covariance operator.

We apply our theory to a class of parabolic stochastic equations in Banach spaces of the form
\[
\ud Y(t)= AY(t)\,\ud t +C\,\ud B(t),
\]
where  $B$ is a cylindrical fBm in a separable Banach space $U$, $A$ is a generator  of a strongly continuous semigroup in a separable Banach space $V$ and $C$ is a linear and continuous operator from $U$ to $V$.  We give necessary and sufficient conditions for the existence and uniqueness of a weak solution,  which is a genuine stochastic process in the Banach space $U$. For comparison, we apply our methods to an  example often considered in the literature and typically formulated in a Hilbert space setting.

The systematic approach adopted in this paper goes back to Kallianpur and Xiong \cite{KallianpurXiong} and to Metivier and Pellaumail  \cite{metivier_pellaumail80}, who treated the cylindrical Wiener case and the cylindrical martingale case, respectively. In this paper we consider an extension beyond the martingale case, since fractional Brownian motion is not a semi-martingale.
Our methodology, based on cylindrical measures and cylindrical random variables, has the advantage that it is \emph{intrinsic} in the sense that it does not require the construction of a larger space in which the cylindrical noise exists as a genuine stochastic process. Due to the connection between cylindrical measures and the theory of geometry of Banach spaces, our methodology  relates the study of fBm and stochastic differential equations driven by fBm to other areas of mathematics, such as operator theory, functional analysis and harmonic analysis, therefore providing a wider range of tools and techniques.

Our long-term aim is to study general stochastic equations in Banach spaces driven by cylindrical fBms, which involves stochastic integration for random integrands. We are inspired by the paper of van Neerven et al.\ \cite{van-neerven_et.al.07} in which they deal with the Wiener case. Here, the approach is based on a two-sided decoupling inequality which enables the authors to define the stochastic integral for random integrands by means of
the integral for deterministic integrands. The latter is introduced in  van Neerven and Weis  \cite{JanLutz2005}, and we hope that our present work will play an analogous role for equations driven by fractional Brownian motions.

Only a few works deal with fBm in Banach spaces and related stochastic integration theory. Brze\'{z}niak et al.\ \cite{brzezniak_et.al.12} consider abstract Cauchy problems in Banach spaces driven by cylindrical Liouville fBm. It is shown that for $H<1/2$ the theory for Liouville fBm is equivalent to the one for fBm, while for $H>1/2$ the space of integrable functions is slightly different. In our paper we extend their results related to the Cauchy problem, as we consider mild and weak  solutions and we  obtain necessary and sufficient conditions for the existence of a solution. In contrast to \cite{brzezniak_et.al.12}, we do not assume any further regularity for the diffusion operator $C$, and therefore we keep the irregular character of the cylindrical noise in the space where the equation is considered. Note however, that in some special cases the authors in \cite{brzezniak_et.al.12} get around this restriction by means of  interpolation techniques.
Furthermore, our approach enables us to guarantee the existence of a solution for $H>1/2$ without any further constraints, whereas the results in this case in  \cite{brzezniak_et.al.12} are restricted either
 to analytic semigroups or to Banach spaces of type larger than 1.
Another approach for the study of an evolution equation driven by a fractional Brownian motion is considered by Balan in \cite{balan11}. The author considers a stochastic heat equation with infinite dimensional fractional noise by using Malliavin calculus, but her approach is strictly limited to a Sobolev-space context.

In the special case of Hilbert spaces, quite some literature on stochastic evolution equations  with fBm noise can be found -- see amongst others Grecksch and Anh \cite{grecksch_anh99},  Duncan and coauthors in a series of papers \ \cite{ duncan_et.al.06, duncan_et.al.05, duncan_et.al.09, duncan_et.al.02}, Tindel et al.\ \cite{tindel_et.al.03}, Maslowski and Nualart \cite{maslowski_nualart03},  Gubinelli et al.\ \cite{gubinelli_et.al.06}.
When restricted to the Hilbert space case, our approach is to some extent similar to the one in \cite{duncan_et.al.06,  duncan_et.al.09, duncan_et.al.02}.
But our method has the advantage of providing not only sufficient, but also necessary conditions for the existence of a solution, which turns out to be both mild and weak.
Tindel et al.\ in \cite{tindel_et.al.03}, who also provide necessary and sufficient conditions for the existence of
a solution, derive their results under a spectral gap assumption on the semigroup, which is assumed to be self-adjoint. Our approach enables us to avoid such kind of restrictive assumptions.

The paper is structured as follows. Section \ref{sc: preliminaries} contains a brief overview of cylindrical measures and cylindrical processes in Banach spaces, with emphasis on cylindrical Gaussian processes. In Section \ref{sc: wiener integrals for hilbert space valued fcs} we recall the construction of the Wiener integral for real valued fBm
and its relation to fractional integral and derivative operators. In Section \ref{sc: Cylindrical fractional Brownian motion} we introduce  cylindrical fBms in  separable Banach spaces and provide a characterisation in terms of a series representation. We illustrate our
notion of fBm by several examples, such as \emph{anisotropic} fBm, i.e.\ spatially non-symmetric noise, and we give conditions under which such cylindrical noises are genuine fBms in the underlying space. Section \ref{sc: integration} is dedicated to the construction and the study of the  stochastic integral in a Banach space. In Section \ref{sc: cauchy
problem} we use this integral  to construct the fractional Ornstein-Uhlenbeck process as the mild and weak solution of a abstract stochastic Cauchy problem in a Banach space. Finally, in Section \ref{sc: application} we consider the special case of the stochastic heat equation with fractional noise in a Hilbert space and compare our results with the existing literature.


\section{Preliminaries}\label{sc: preliminaries}

Throughout this paper, $U$ indicates a separable Banach space over $\R$ with norm $\|\cdot\|_U$. The topological dual   of $U$ is denoted by $U^*$ and the algebraic one by $U'$. For $u^*\in U^*$ we indicate the dual pairing  by $\langle  u, u^* \rangle $. If $U$ is a Hilbert space we identify the dual space $U^\ast$ with $U$.  The Borel $\sigma$-algebra on a Banach space $ U$ is denoted by $\mathcal B(U)$. If $V$ is another Banach space then $\L(U,V)$ denotes the space of bounded, linear operators from $U$ to $V$ equipped with the operator norm topology.

For a measure space $(S,{\mathcal S},\mu)$ we denote by $L^p_\mu(S;U)$, $p\ge 0$, the space of equivalence classes of measurable functions $f:S\to U$ with $\int \norm{f(s)}_U^p\,\mu(\ud s)<\infty$. If $S\in {\mathcal B}(\R)$ and $\mu$ is the Lebesgue measure we use the notation $L^p(S;U)$.

Next we recall some notions about cylindrical measures and cylindrical random variables as it can be found in Badrikian \cite{Badrikian} or Schwartz \cite{Schwartz}.
 Let $\Gamma$ be a subset of $U^*$, $n\in \N $, $u_1^* ,\ldots , u_n^* \in \Gamma $ and $B \in \mathcal B( \R^n )$.
A set of the form
\[
\cs(u^*_1 ,\ldots , u^*_n ; B) := \{u \in U : ( \langle u, u^*_1 \rangle,\ldots, \langle u, u^*_n\rangle ) \in B\},
\]
is called a {\em cylindrical set}. We denote by $\mathcal Z(U,\Gamma)$ the set of all cylindrical sets in $U$ for a given $\Gamma$.
 It turns out this is an
 \emph{algebra}. Let  $\mathcal C(U,\Gamma)$ be the generated \emph{$\sigma$-algebra}. When $\Gamma= U^*$ the notation is
$\mathcal Z(U)$ and  $\mathcal C(U)$, respectively. If $U$ is separable
then both the Borel  $\sigma$-algebra $\mathcal B(U )$ and the cylindrical $\sigma$-algebra $\mathcal C(U )$ coincide. \\
 A  function $\mu:\mathcal Z(U) \to [0,\infty]$ is called  a
\emph{cylindrical measure} on $\mathcal Z(U)$ if for each finite subset $\Gamma\subseteq U^*$ the restriction of $\mu$ to
the $\sigma$-algebra $\mathcal C(U,\Gamma)$ is a measure. It is called \emph{finite} if $\mu(U)$ is finite and \emph{cylindrical probability measure} if  $\mu(U) =1$.

For every function $f:U\to{\mathbb C}$ which is measurable with respect to
${\mathcal Z}(U,\Gamma)$ for a finite subset $\Gamma\subseteq U^\ast$ the integral $\int
f(u)\,\mu(\ud u)$ is well defined as a complex valued Lebesgue integral if it
exists. In particular, the characteristic function $\phi_\mu:U^\ast\to{\mathbb C}$ of a
finite cylindrical measure $\mu$ is defined by
\begin{align*}
 \phi_{\mu}(u^\ast):=\int_U e^{\imath\, \scapro{u}{u^\ast}}\,\mu(\ud u)\qquad\text{for all }u^\ast\in
 U^\ast.
\end{align*}

Let $(\Omega, \A,P)$ be a probability space. A {\em cylindrical random variable $ Z$ in $U$} is a linear and continuous map
\[
   Z:U^*\to L_P^0(\Omega;\R),
\]
where $L_P^0(\Omega;\R)$ is equipped with the topology of convergence in probability. The characteristic function of a cylindrical random  variable $Z$ is defined by
\begin{align*}
 \phi_Z:U^\ast \to {\mathbb C}, \qquad \phi_Z(u^\ast)=E[\exp(\imath\, Zu^\ast)].
\end{align*}
A cylindrical process in $U$ is a family $( Z(t):\, t\ge 0)$ of cylindrical random variables in $U$.

Let $Z:U^*\to L_P^0(\Omega;\R)$ be a cylindrical random variable in $U$. If $\cs=\cs(u_1^\ast,\dots, u_n^\ast;B)$ is a cylindrical set for
$u^\ast_1,\dots, u^\ast_n\in U^\ast$ and $B\in {\mathcal B}(\R^n)$, we obtain a cylindrical probability measure $\mu$ by the prescription
\begin{align*}
  \mu(\cs):=P((Zu^\ast_1,\dots, Zu^\ast_n)\in B).
\end{align*}
We call $\mu$ the {\em cylindrical distribution of $Z$} and the
characteristic functions $\phi_\mu$ and $\phi_Z$ of $\mu$ and $Z$
coincide. Conversely, for every cylindrical measure $\mu$ on
${\mathcal Z}(U)$ there exist a probability space $(\Omega,\A,P)$ and a cylindrical random variable $Z:U^\ast\to L^0_P(\Omega;\R)$ such
that  $\mu$ is the cylindrical distribution of $Z$.

A cylindrical probability measure $\mu$ on $\mathcal Z(U)$ is called
 \emph{Gaussian} if the image measure $\mu \circ (u^*)^{-1}$ is a Gaussian measure on $\mathcal B(\R)$ for all $u^*\in U^*$.
The characteristic function $\varphi_\mu:U^*\to \mathbb C$ of a
  Gaussian cylindrical measure $\mu$ is of the form
\begin{equation}\label{equation charact fc cylin meas}
\varphi_\mu(u^*)= \exp\left( \imath\, m(u^*) - \tfrac{1}{2} s(u^*)\right)
\qquad\text{for all }u^\ast\in U^\ast,
\end{equation}
where the mappings $m:U^*\to \R$ and $s:U^*\to \R_+$ are given by
\[
m(u^*)=\int_U \langle u, u^*\rangle \, \mu(\ud u), \quad s(u^*)=\int_U \langle u, u^*\rangle^2 \,\mu(\ud u) - m(u^*)^2.
\]
Conversely, if  $\mu$  is a cylindrical measure with characteristic function of the form \eqref{equation charact fc cylin meas}
for a linear functional  $m:U^*\to \R$ and a quadratic form $s:U^*\to \R_+$, then $\mu$ is a Gaussian cylindrical measure.

For a  Gaussian cylindrical measure $\mu$
with characteristic function of the form \eqref{equation charact fc cylin meas} one defines the covariance operator $Q: U^*\to (U^*)'$   by
\[
\left(Q u^*\right)v^*= \int_U\langle u, u^*\rangle \langle u,v^*\rangle \,\mu(\ud u)-  m(u^*)m(v^*)\qquad\text{for all }u^\ast, v^\ast\in U^\ast.
\]
On the contrary to Gaussian Radon measures, the covariance operator might take values only in the algebraic dual of $U^*$, that is the linear map $Qu^\ast:U^\ast\to \R$ might be not continuous for some $u^\ast\in U^\ast$. However often we exclude this rather general situation by requiring at least that $Qu^\ast$ is norm continuous, that is $Q:U^\ast\to U^{\ast\ast}$. Note that in this situation the characteristic function $\phi_\mu$ of $\mu$ in \eqref{equation charact fc cylin meas} can be written as
\begin{align*}
\phi_\mu(u^*)= \exp\left( \imath\, m(u^*) - \tfrac{1}{2} \scapro{u^\ast}{Qu^\ast}\right)
\qquad\text{for all }u^\ast\in U^\ast.
\end{align*}
A cylindrical random variable $Z:U^\ast\to L_P^0(\Omega;\R)$ is called {\em Gaussian} if its cylindrical distribution is Gaussian.
 Since we require from the cylindrical random variable $Z$ to be  continuous it follows that its characteristic function $\phi_Z:U^\ast\to {\mathbb C}$ is continuous. The latter occurs if and only if the covariance operator $Q$ maps to $U^{\ast\ast}$.

\section{Wiener integrals for Hilbert space valued integrands}
\label{sc: wiener integrals for hilbert space valued fcs}
In the following we recall the construction of the Wiener integral with respect to a real valued fractional Brownian motion for integrands which are  Hilbert space valued deterministic functions. For real valued integrands the construction is accomplished for example in
\cite{biagini_et.al.08} and for Hilbert space valued integrands in
\cite{duncan_et.al.06,duncan_et.al.09,pasik-duncan_et.al.06}.

We begin with recalling the definition of a fractional Brownian motion (fBm)  and for later purpose, we introduce it in $\R^n$. A Gaussian process $( b(t):\, t\ge 0)$ in $\R^n$ is
a {\em fractional Brownian motion with Hurst parameter $H\in(0,1)$} if there exists a matrix $M\in\R^{n\times n}$ such that
\begin{align*}
   E\big[\scapro{\alpha}{b(s)}\big]=0,\qquad\quad
   E\big[\scapro{\alpha}{b(s)}\scapro{\beta}{b(t)}\big]= \scapro{M\alpha}{\beta} R(s,t)
\end{align*}
for all $s,t\ge 0$ and $\alpha,\beta \in\R^n$, where
\[
R(s,t):= \tfrac{1}{2}\left(s^{2H}+t^{2H}-|s-t|^{2H}\right)
\qquad\text{for all }s,t\ge 0.
\]
The matrix $M=(m_{i,j})_{i,j=1}^n$ is called the {\em covariance matrix} of
the fBm $\big((b_1(t),\dots, b_n(t)):\,t\ge 0)$  in $\R^n$ since it follows
\begin{align*}
  m_{i,j}=E\big[b_i(1)b_j(1)\big] \qquad\text{ for all $i,j=1,\dots, n$.}
\end{align*}
Thus, $M$ is a positive and symmetric matrix.
If $M=\id$ then $b$ is called {\em standard fractional Brownian motion}.
It follows from Kolmogorov's continuity theorem by the Garsia-Rodemich-Rumsey
inequality, that there exists a version of a fBm with H{\"o}lder continuous paths of
any order smaller than $H$.

We fix for the complete work the Hurst parameter and assume $H\in (0,1)\setminus\{\tfrac{1}{2}\}$.
The covariance function has an integral representation given by
\begin{equation}\label{equation integral representation of R(t,s)}
R(s,t)=\int_0^{s\wedge t}\kappa(s,u)\kappa(t,u) \, \ud u
\qquad\text{for all }s,t\ge 0,
\end{equation}
where the kernel $\kappa$ has different expressions depending on the Hurst parameter. If $H>\tfrac{1}{2}$ then
\[
\kappa(t,u)=b_H u^{1/2-H} \int_u^t (r-u)^{H-3/2} r^{H-1/2}\, \ud r
\qquad\text{for all }0\le u< t,
\]
where $b_H=(H(2H-1))^{1/2}(\beta(2-2H, H-1/2))^{-1/2}$ and $\beta$ denotes the Beta function.
If $H<\frac{1}{2}$, we have
\begin{align*}
\kappa(t,u)=& \,b_H \Big( \left(\tfrac{t}{u}\right)^{H-1/2}( {t}-{u})^{H-1/2}\\
 &- \left(H-\tfrac{1}{2}\right) u^{1/2-H} \int_u^t (r-u)^{H-1/2} r^{H-3/2}\, \ud r\Big)\qquad\text{for all }0\le u< t,
\end{align*}
where $b_H=[2H/((1-2H)\beta(1-2H, H+1/2))]^{1/2}$.

Let $X$ be a separable Hilbert space with scalar product $[\cdot, \cdot]$. A {\em simple} $X$-valued function $f:[0,T]\to X$ is of the form
\begin{align}\label{eq.simplevarphi}
f(t)=\sum_{i=0}^{n-1} x_i \1_{[t_i,t_{i+1})}(t)
\qquad \text{for all }t\in [0,T],
\end{align}
where $x_i\in X$, $0=t_0<t_1<\cdots< t_{n}=T$ and $n\in\N$. The space of all simple, $X$-valued functions is denoted by $\mathcal E$ and it is  equipped with an inner product defined by
\begin{equation}\label{equation def scalar product on E H calligraphic}
\left\langle \sum_{i=0}^{m-1} x_i \1_{[0,s_{i})} \, , \sum_{j=0}^{n-1} y_j \1_{[0,t_j)} \right \rangle_{\mathcal M}:= \sum_{i=0}^{m-1} \sum_{j=0}^{n-1}  [x_i ,y_j] R(s_i,t_j).
\end{equation}
Thus, $\mathcal E$ is a pre-Hilbert space. We denote
 the closure of $\mathcal E$ with respect to $\langle  \cdot, \cdot \rangle_{\mathcal M}$ by $\mathcal M$.

Let $(b(t):\,t\ge 0)$  be a real valued fractional Brownian motion with
Hurst parameter $H$. For a simple, $X$-valued function $f:[0,T]\to X$ of the form \eqref{eq.simplevarphi}  we define the {\em Wiener integral} by
\begin{equation*}
\int_0^T f\, \ud b  := \sum_{i=0}^{n-1} x_i \big( b (t_{i+1})-b (t_i)\big).
\end{equation*}
 The integral $\int f\, \ud b $  is a random variable in $L^2_P(\Omega;X)$ and the map $f\mapsto \int f\, \ud  b$ defines an isometry between $\mathcal E$  and $L^2_P(\Omega;X)$, since
\begin{equation}\label{equation isometry step function in Hilbert space}
\left \|\int_0^T f \,\ud  b  \right \|^2_{L^2_P}= \|f \|_{\mathcal M}^2.
\end{equation}
Consequently, we can extend the mapping $f\mapsto \int f\, \ud  b$ to the space ${\mathcal M}$ and
 the extension still satisfies the isometry \eqref{equation isometry step function in Hilbert space}.

There is an alternative prescription of the space ${\mathcal M}$ of possible integrands. For that purpose, we introduce the linear operator
$K^*: \mathcal E\to L^2([0,T];X)$,
which is defined for all $t\in [0,T]$ in case $H<\tfrac{1}{2}$ by
\begin{align*}
  (K^* f)(t)&:=f(t)\kappa(T,t)+\int_t^T (f(s)-f(t)) \frac{\partial\kappa}{\partial s}(s,t)\, \ud s,
\intertext{
and in case $H>\tfrac{1}{2}$  by}
  (K^* f)(t)&:=\int_t^T f(s) \frac{\partial\kappa}{\partial s}(s,t)\, \ud s .
\end{align*}
The integrals appearing on the right-hand side are both Bochner integrals. Since the operator $K^\ast$ satisfies
\begin{align}\label{eq.KastScaPro}
  \scapro{K^\ast f}{K^\ast g}_{L^2}=\scapro{f}{g}_{\mathcal M}
  \qquad\text{for all }f,\,g\in {\mathcal E},
\end{align}
it can be extended to an isometry $K^\ast$ between $\mathcal M$ and
 $L^2([0,T];X)$. Together with (\ref{equation isometry step function in Hilbert space}) we obtain
\begin{equation}\label{equation isometry wiener integral Hilbert}
 \left\|\int_0^T f\, \ud   b  \right \|_{L^2_P}^2=
 \|K^*f\|^2_{L^2} =\|f\|_{\mathcal  M }^2
 \qquad\text{for all }f\in\mathcal M.
 \end{equation}
The operator $K^\ast$ can be rewritten  using the notion of fractional integrals and derivatives. For this purpose, define for $\alpha>0$ the {\em fractional integral operator} $I^\alpha_{T-}:
L^2([0,T];X)\to L^2([0,T];X)$  by
\begin{align*}
\left(I^\alpha_{T-} f\right)(t):= \frac{1}{\Gamma(\alpha)} \int_t^T (s-t)^{\alpha-1} f (s) \, \ud s\qquad\text{for all } t\in [0,T].
\end{align*}
Young's inequality
guarantees that $I^\alpha_{T-} f \in L^2([0,T];X)$ and that the operator
$I^\alpha_{T-}$ is bounded on $L^2([0,T];X)$. We define the space
\begin{align*}
  H_{T-}^\alpha([0,T];X):=I^\alpha_{T-}(L^2([0,T];X))
  \end{align*}
and equip it with the norm
\begin{align*}
\norm{I^\alpha_{T-} f}_{H_{T-}^\alpha}:=\norm{f}_{L^2}
\qquad\text{for all } f\in L^2([0,T];X).
\end{align*}
It follows that the space
$H_{T-}^\alpha([0,T];X)$ is a Hilbert space and it is continuously embedded in $L^2([0,T];X)$.

For $\alpha\in(0,1)$ the {\em fractional differential operator}
$D^\alpha_{T-}: H_{T-}^\alpha([0,T];X)\to L^2([0,T];X)$ is defined
by
\begin{align*}
  (D^\alpha_{T-}  f)(t):=\frac{1}{\Gamma(1-\alpha)}
  \left(\frac{f(t)}{(T-t)^\alpha}+ \alpha\int_t^T
   \frac{f(t)-f(s)}{(s-t)^{\alpha+1}}\,\ud s\right)
\end{align*}
for all $t\in [0,T]$.
The fractional integral and differential operators obey the  inversion formulas
\begin{align*}
  I^\alpha_{T-}(D^\alpha_{T-} f)=f\qquad\text{for all }f\in H^\alpha_{T-}([0,T];X),
\end{align*}
and
\begin{align*}
  D^\alpha_{T-}(I^\alpha_{T-} f)=f\qquad\text{for all }f\in L^1([0,T];X).
\end{align*}


Let $p^{H-1/2}$ denote the function $p^{H-1/2}(t)=t^{H-1/2}$ for all $t\in [0,T]$. The operator $K^\ast$ can be rewritten
in the case $H>1/2$ as
\begin{align}
  (K^\ast f)(t)&=b_H \Gamma\left(H-\tfrac{1}{2}\right)t^{1/2-H}
  I_{T-}^{H-1/2}\left( p^{H-1/2} f \right)(t)\label{eq.KastH>0.5}
\intertext{for all $t\in[0,T]$ and in the case of $H<1/2$ in the form}
  (K^\ast f)(t)
  &=b_H\Gamma\left(H+\tfrac{1}{2}\right) t^{1/2-H} D_{T-}^{1/2-H}\left(p^{H-1/2} f \right)(t).
  \label{eq.KastH<0.5}
\end{align}
It can be seen from \eqref{eq.KastH>0.5} that $\mathcal M$ contains distribution for $H>\tfrac{1}{2}$. Thus it became standard to restrict the space ${\mathcal M}$ in this case, see for example \cite{biagini_et.al.08, duncan_et.al.09,pipiras_taqqu00}. It turns out that an appropriate choice is the function space
\[
\vert\mathcal M \vert:=\left\{f:[0,T]\to X:\,
\int_0^T\int_0^T \|f(s)\| \|f(t)\| \vert s-t\vert^{2H-2} \, \ud s \, \ud t < \infty\right\},
\]
equipped with the norm
\begin{align*}
\norm{f}_{\abs{\mathcal M}}^2:=
  H(2H-1) \int_0^T\int_0^T \|f(s)\| \|f(t)\| \vert s-t\vert^{2H-2} \, \ud s \, \ud t.
\end{align*}
The space $\abs{\mathcal M}$ is complete and it is  continuously embedded in $\mathcal M$. The proof of this fact is analogous to the real valued case, see e.~g.~\cite[Pro.2.1.13]{biagini_et.al.08}.
If  $H>\frac{1}{2}$ then the  covariance function $R$ is differentiable with
\begin{align*}
  \frac{\partial^2 R}{\partial s \partial t}(s,t)= H(2H-1) \abs{s-t}^{2H-2}
  \qquad\text{for all }s,t\ge 0,
\end{align*}
and we can rewrite \eqref{equation def scalar product on E H calligraphic} as
\begin{align}\label{eq.scalarMH>0.5}
  \scapro{f}{g}_{\mathcal M}=H(2H-1)\int_0^T\int_0^T [f(s),g(t)]\abs{s-t}^{2H-2}\,\ud s\,\ud t
\end{align}
for all simple functions $f,g \in \mathcal E$. Since $\mathcal E$ is dense in  $\abs{\mathcal M}$, equation \eqref{eq.scalarMH>0.5} is true for
all $f,g \in \vert\mathcal M \vert$, see \cite[Eq.(2.14)]{duncan_et.al.09}.

%
We summarise the two cases by defining
\begin{equation}
 \widehat{ \mathcal M }:=
\begin{cases}
 \mathcal M & \text{if } H\in (0,1/2),\\
 \vert\mathcal M \vert & \text{if } H\in (1/2,1).
\end{cases}
\end{equation}
Recall that $\M$ is  a Banach space and the operator $K^\ast$ satisfies
\begin{align}\label{eq.KisoMmod}
  \norm{K^\ast f}_{L^2}\le c \norm{f}_{\M}
  \qquad\text{for all }f\in \M
\end{align}
for a constant $c>0$. Inequality \eqref{eq.KisoMmod} follows
 from \eqref{equation isometry wiener integral Hilbert} and, if $H>\tfrac{1}{2}$, from
   the continuous embedding $\abs{\mathcal M} \hookrightarrow {\mathcal M}$. If $H<\tfrac{1}{2}$
   we can choose  $c=1$.

In the sequel,  we collect some properties of the spaces  $\mathcal M$ and $\abs{\mathcal M}$. Recall that the time interval $[0,T]$ is fixed. In our first
result the coincidence of the spaces are well known, whereas we are only aware that the
equivalence of the norms is stated in \cite{brzezniak_et.al.12} but without a proof.
\begin{prop}\label{prop.eqMandH}
  For $H<\tfrac{1}{2}$ the spaces $\mathcal M$ and $H_{T-}^{1/2-H}([0,T];X)$ coincide and
  the norms are equivalent.
\end{prop}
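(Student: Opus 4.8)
The plan is to exploit the isometry $\norm{f}_{\mathcal M}=\norm{K^\ast f}_{L^2}$ from \eqref{equation isometry wiener integral Hilbert} together with the explicit formula \eqref{eq.KastH<0.5}. Put $\alpha:=\tfrac12-H\in(0,\tfrac12)$, so that $p^{1/2-H}=p^{\alpha}$ and $p^{H-1/2}=p^{-\alpha}$, and \eqref{eq.KastH<0.5} becomes $K^\ast f=c_H\,p^{\alpha}D^\alpha_{T-}(p^{-\alpha}f)$ with $c_H=b_H\Gamma(H+\tfrac12)$. The elementary but crucial point is that $2\alpha=1-2H<1$; hence $p^{-\alpha}\in L^2([0,T])$ and $p^{\alpha}\in L^\infty([0,T])$, so multiplication by $p^{-\alpha}$ maps $L^2([0,T];X)$ continuously into $L^1([0,T];X)$, while multiplication by $p^{\alpha}$ is bounded on every $L^q([0,T];X)$.

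First I would check that the completion $\mathcal M$ embeds continuously into $L^1([0,T];X)$, so that its elements are genuine Bochner integrable $X$-valued functions. For $f\in\mathcal M$ we have $K^\ast f\in L^2([0,T];X)$, whence $D^\alpha_{T-}(p^{-\alpha}f)=c_H^{-1}\,p^{-\alpha}(K^\ast f)\in L^1([0,T];X)$, and the inversion formulas for $I^\alpha_{T-}$ and $D^\alpha_{T-}$ give $f=c_H^{-1}\,p^{\alpha}I^\alpha_{T-}\bigl(p^{-\alpha}(K^\ast f)\bigr)$. As $I^\alpha_{T-}$ is bounded on $L^1([0,T];X)$ by Young's inequality, this yields $\norm{f}_{L^1}\le C\norm{p^{-\alpha}}_{L^2([0,T])}\norm{K^\ast f}_{L^2}=C'\norm{f}_{\mathcal M}$. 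On the other hand, the material recalled above already provides the continuous embeddings $H_{T-}^{\alpha}([0,T];X)\hookrightarrow L^2([0,T];X)\hookrightarrow L^1([0,T];X)$.

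Next I would use that $\mathcal M$ and $H_{T-}^{\alpha}([0,T];X)$ coincide as subsets of $L^1([0,T];X)$: the inclusion $\mathcal E\subseteq H_{T-}^{\alpha}([0,T];X)$ is immediate once $D^\alpha_{T-}\1_{[a,b)}$ is computed explicitly and seen to lie in $L^2([0,T])$ precisely because $2\alpha<1$, while the two set inclusions reduce to the well-known real-valued statement (see e.g.\ \cite{pipiras_taqqu00}, and also \cite{brzezniak_et.al.12}), the passage to $X$-valued functions being routine since $K^\ast$, $I^\alpha_{T-}$ and $D^\alpha_{T-}$ act componentwise. Granting this, the proof is finished by the standard fact that two Banach spaces which are continuously embedded in a common Hausdorff topological vector space and coincide as sets carry equivalent norms; one simply applies the closed graph theorem to the identity map and to its inverse.

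The step that requires the most care is the inversion of $K^\ast$ in the second paragraph: the weights $p^{\pm\alpha}$ do not commute with the non-local operators $D^\alpha_{T-}$ and $I^\alpha_{T-}$, so the inversion formula has to be applied to $p^{-\alpha}f$ and not to $f$, and — relatedly — the inclusion $\mathcal M\subseteq H_{T-}^{\alpha}([0,T];X)$ is genuinely non-trivial. A more self-contained route that makes the mechanism transparent decomposes the conjugated operator $p^{\alpha}D^\alpha_{T-}p^{-\alpha}$ as $D^\alpha_{T-}+R_\alpha$, where $R_\alpha$ is the integral operator with kernel $\tfrac{\alpha}{\Gamma(1-\alpha)}\,\tfrac{s^{\alpha}-t^{\alpha}}{s^{\alpha}(s-t)^{\alpha+1}}\,\1_{\{0<t<s\}}$; although $R_\alpha$ is itself \emph{unbounded} on $L^2$ (test it on indicators of short intervals near $0$), the composition $R_\alpha I^\alpha_{T-}$ has a kernel homogeneous of degree $-1$ on $\{0<t<s\}$ and is therefore bounded on $L^2$ by the Hardy--Littlewood--P\'olya criterion, equivalently by Schur's test with weight $t\mapsto t^{-1/2}$. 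Since $K^\ast I^\alpha_{T-}=c_H(\id+R_\alpha I^\alpha_{T-})$ on $L^2([0,T];X)$, this already gives $\norm{f}_{\mathcal M}\le C\norm{f}_{H_{T-}^{\alpha}}$; for the reverse inequality one runs the symmetric computation with the opposite conjugation $p^{-\alpha}D^\alpha_{T-}p^{\alpha}$ and uses that $K^\ast$ is onto $L^2([0,T];X)$ for $H<\tfrac12$, which is what forces $\id+R_\alpha I^\alpha_{T-}$ to be an isomorphism of $L^2$.
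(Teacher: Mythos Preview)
Your proposal is correct and takes a genuinely different route from the paper. Both arguments accept the \emph{set} equality $\mathcal M=H_{T-}^{1/2-H}$ from the literature; from there the paper argues directly via the decomposition
\[
K^\ast f=a\,D_{T-}^{1/2-H}f+Rf,\qquad a=b_H\Gamma(H+\tfrac12),
\]
which it attributes to \cite{AlosMazetNualart}, asserting that $R:L^2\to L^2$ is bounded, and then combines this with the weighted Hardy--Littlewood embedding $\mathcal M\hookrightarrow L^{1/H}\hookrightarrow L^2$ to obtain both inequalities. Your first route instead embeds $\mathcal M$ and $H_{T-}^{1/2-H}$ continuously into the common ambient space $L^1([0,T];X)$ and concludes by the closed graph theorem; this is cleaner and needs no quantitative control of the remainder.

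Your alternative, more explicit route is illuminating because it uncovers a genuine subtlety: the remainder $R=c_H R_\alpha$ coming from \eqref{eq.KastH<0.5} is \emph{not} bounded on $L^2$---your test on $\1_{[0,\epsilon)}$ is correct, giving $(R_\alpha\1_{[0,\epsilon)})(t)\sim t^{-\alpha}$ on $(0,\epsilon)$ and hence norm ratio $\sim\epsilon^{-\alpha}\to\infty$. You circumvent this by showing that the composition $R_\alpha I^\alpha_{T-}$ has a kernel homogeneous of degree $-1$, hence is bounded on $L^2$ by the Hardy--Littlewood--P\'olya criterion; this yields $K^\ast I^\alpha_{T-}=c_H(\id+R_\alpha I^\alpha_{T-})$ bounded, i.e.\ $\norm{f}_{\mathcal M}\le C\norm{f}_{H_{T-}^{1/2-H}}$. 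The one place you should tighten is the reverse inequality: saying that surjectivity of $K^\ast$ ``forces'' $\id+R_\alpha I^\alpha_{T-}$ to be an isomorphism tacitly uses the set equality again (to make $I^\alpha_{T-}:L^2\to\mathcal M$ a bijection), after which the open mapping theorem finishes the job. State that explicitly and the argument is complete.
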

\begin{proof}
The fact that the spaces coincide is shown in  \cite[Pro.6]{AlosMazetNualart}.
The proof of the equivalence of the norms is based on the following relation,
which can be found in the proof of  \cite[Pro.6]{AlosMazetNualart}:
\begin{align}\label{eq.decompNualart}
  K^\ast f= a \left(D_{T-}^{1/2-H}f\right)+Rf  \qquad\text{for all }f\in {\mathcal M},
\end{align}
where $a:=b_H\Gamma(H+\tfrac{1}{2})$ and $R:L^2([0,T];X)\to L^2([0,T];X)$ is
a linear and continuous operator.
Since $H_{T-}^{1/2-H}([0,T];X)$ is continuously
embedded in $L^2([0,T];X)$ there exists a  constant $c>0$
such that for each $f\in \mathcal M$ we have
\begin{align*}
  \norm{f}_{\mathcal M}=\norm{K^\ast f}_{L^2}
  &\le a\norm{D_{T-}^{1/2-H}f}_{L^2}+ \norm{Rf}_{L^2}
  \le \left(a+ c\norm{R}\right)\norm{f}_{H_{T-}^{1/2-H}}.
\end{align*}
On the other hand,  the Hardy-Littlewood inequality in weighted spaces  guarantees that $\mathcal M$ is continuously embedded in $L^{1/H}([0,T];X)$. More specifically,
by choosing $p=2, \alpha=\tfrac{1}{2}-H, m=0, q=\tfrac{1}{H}, \mu=2\alpha, \nu=q\alpha$
in  \cite[Th.5.4]{samko_et.al.93}, 
we obtain for $f\in \mathcal M$
\begin{align}\label{eq.Hardy-Littlewood-weighted}
 \norm{f}_{L^q}
& =\left(\int_0^T \norm{f(t)}^q\,\ud t\right)^{1/q}\notag\\
&= \left(\int_0^T t^\nu\norm{t^{-\alpha} f(t)}^q\,\ud t\right)^{1/q}\notag\\
&= \left(\int_0^T t^\nu\norm{(I^\alpha_{T-} D^\alpha_{T-} p^{-\alpha} f)(t)}^q\,\ud t\right)^{1/q}\notag\\
&\le c\left(\int_0^T t^\mu\norm{(D^\alpha_{T-} p^{-\alpha} f)(t)}^p\,\ud t\right)^{1/p}\notag\\
&=c (b_H \Gamma(H+\tfrac{1}{2}))^{-1/p} \norm{K^\ast f}_{L^2}\notag\\
&=c (b_H \Gamma(H+\tfrac{1}{2}))^{-1/p}\norm{f}_{\mathcal M},
\end{align}
for a constant $c>0$.
Consequently, together with  the continuous
embedding of $L^{1/H}([0,T];X)$ in $L^2([0,T];X)$, it follows from
\eqref{eq.decompNualart} that each $f\in \mathcal M$ satisfies
\begin{align*}
 a \norm{f}_{H_{T-}^{1/2-H}}
  \le \norm{K^\ast f}_{L^2} + \norm{Rf}_{L^2}
  = \left( 1+ c (b_H \Gamma(H+\tfrac{1}{2}))^{-1/2} \norm{R}\right) \norm{f}_{\mathcal M},
\end{align*}
which completes the proof.
\end{proof}
\begin{prop}  \label{pro.extrefinM}
For every $t\in [0,T]$ there exists a constant $c_t>0$ such
that each $f\in\M$ obeys:
  \begin{itemize}
  \item[(a)] $\1_{[0,t]}f\in \M$ and
    $  \norm{\1_{[0,t]}f}_{\mathcal M}\le c_t \norm{f}_{\mathcal M}$.
  \item[(b)]  $\1_{[0,t]}f(t-\cdot)\in \M$ and $\norm{\1_{[0,t]}f(t-\cdot)}_{\mathcal M}=\norm{\1_{[0,t]}f}_{\mathcal M}$.
  \end{itemize}
\end{prop}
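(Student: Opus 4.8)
The plan is to split according to whether $H<\tfrac12$, in which case $\M=\mathcal M$, or $H>\tfrac12$, in which case $\M=\abs{\mathcal M}$. The case $H>\tfrac12$ follows directly from the double-integral formula for $\norm{\cdot}_{\abs{\mathcal M}}$. For (a), replacing the integration domain $[0,T]^2$ by $[0,t]^2$ can only decrease the nonnegative integrand, so $\1_{[0,t]}f\in\abs{\mathcal M}$ with $\norm{\1_{[0,t]}f}_{\abs{\mathcal M}}\le\norm{f}_{\abs{\mathcal M}}$, i.e.\ $c_t=1$ works. For (b), one writes out $\norm{\1_{[0,t]}f(t-\cdot)}_{\abs{\mathcal M}}^2$ and applies the substitution $u=t-s$, $v=t-r$: it preserves $\abs{s-r}=\abs{u-v}$ and maps $[0,t]^2$ onto itself, turning the integral into $\norm{\1_{[0,t]}f}_{\abs{\mathcal M}}^2$.

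Assume now $H<\tfrac12$ and put $\alpha:=\tfrac12-H\in(0,\tfrac12)$. For (a) the point is that pointwise multiplication by $\1_{[0,t]}$ is bounded on $\mathcal M$, and by Proposition~\ref{prop.eqMandH} this is equivalent to its boundedness on $H^\alpha_{T-}([0,T];X)=I^\alpha_{T-}(L^2([0,T];X))$. Given $f=I^\alpha_{T-}g$ with $g\in L^2$, a computation with $I^\alpha_{T-}$, $D^\alpha_{T-}$ and their inversion formulas shows that $\1_{[0,t]}f=I^\alpha_{T-}h$, where $h$ vanishes on $(t,T]$ and, for $s\in[0,t)$,
\begin{align*}
  h(s)=g(s)+\frac{\alpha}{\Gamma(1-\alpha)}\int_t^T\frac{f(r)}{(r-s)^{\alpha+1}}\,\ud r.
\end{align*}
Since $g\in L^2$ it therefore suffices to bound the second summand of $h$ in $L^2([0,t];X)$ by a constant multiple of $\norm{g}_{L^2}$; then $c_t$ is the norm of the multiplication operator on $H^\alpha_{T-}$. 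Substituting $f=I^\alpha_{T-}g$ into the second summand and interchanging the order of integration exhibits it as an integral operator applied to $g$, with an explicit kernel $\Phi$ on $[0,t)\times(t,T)$; the substitution $r-s=(v-s)w$ turns $\Phi(s,v)$ into $(v-s)^{-1}$ times an incomplete Beta integral in the variable $\theta=(t-s)/(v-s)$, and an elementary bound of the latter yields
\begin{align*}
  \Phi(s,v)\le C\,(t-s)^{-\alpha}(v-t)^{\alpha}(v-s)^{-1}\qquad\text{for }0\le s<t<v\le T.
\end{align*}

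The main difficulty is the $L^2$-boundedness of this operator. The crude bounds fail -- $\Phi$ is neither Hilbert--Schmidt nor bounded in the unweighted Schur sense -- so the clean route is the substitution $x=t-s$, $y=v-t$ (extending $g$ by $0$ outside $[t,T]$): the bounding kernel becomes $C\,x^{-\alpha}y^{\alpha}(x+y)^{-1}$, a nonnegative kernel on $(0,\infty)^2$ homogeneous of degree $-1$, and such an operator is bounded on $L^2(0,\infty)$ with norm $\int_0^\infty y^{\alpha-1/2}(1+y)^{-1}\,\ud y$ (a weighted form of the classical Hilbert inequality). This integral is finite precisely because $\alpha\in(0,\tfrac12)$, and restricting back to $s\in(0,t)$, $v\in(t,T)$ only improves the estimate; hence $h\in L^2$ with the required bound, which proves (a).

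For (b) with $H<\tfrac12$ the claim reduces to a reflection symmetry of $R$. If $f$ is a simple function supported on $[0,t]$ then so is $\1_{[0,t]}f(t-\cdot)$, and expanding both $\mathcal M$-norms by means of \eqref{equation def scalar product on E H calligraphic} reduces $\norm{\1_{[0,t]}f(t-\cdot)}_{\mathcal M}=\norm{f}_{\mathcal M}$ to the identity
\begin{align*}
  &R(b,d)-R(b,c)-R(a,d)+R(a,c)\\
  &\qquad=R(t-a,t-c)-R(t-a,t-d)-R(t-b,t-c)+R(t-b,t-d)
\end{align*}
for $0\le a,b,c,d\le t$, which is immediate from $R(x,y)=\tfrac12(x^{2H}+y^{2H}-\abs{x-y}^{2H})$ because the pure-power contributions cancel and $\abs{x-y}^{2H}$ is reflection invariant. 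For general $f\in\mathcal M$, approximate $f$ in $\mathcal M$ by simple functions $f_n$ having $t$ as a breakpoint; by (a) the functions $\1_{[0,t]}f_n$, which are simple and supported on $[0,t]$, converge to $\1_{[0,t]}f$ in $\mathcal M$, and by the identity above $(\1_{[0,t]}f_n(t-\cdot))_n$ is Cauchy in $\mathcal M$ with $\norm{\1_{[0,t]}f_n(t-\cdot)}_{\mathcal M}=\norm{\1_{[0,t]}f_n}_{\mathcal M}$. Its $\mathcal M$-limit then has norm $\norm{\1_{[0,t]}f}_{\mathcal M}$, and it coincides a.e.\ with $\1_{[0,t]}f(t-\cdot)$ by passing to the continuous embedding $\mathcal M\hookrightarrow L^2([0,T];X)$, along which reflection preserves a.e.\ convergence. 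This gives (b).
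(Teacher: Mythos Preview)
Your proof is correct, and for $H>\tfrac12$ it coincides with the paper's. For $H<\tfrac12$ you take a genuinely different route in both parts.

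For part (a) the paper simply invokes the known fact that multiplication by $\1_{[0,t]}$ is bounded on $H_{T-}^{1/2-H}([0,T];X)$, citing \cite[Th.13.9, Th.13.10, Re.13.3]{samko_et.al.93} and \cite[Le.2.1, Le.2.2]{brzezniak_et.al.12}, and then transfers this via the norm equivalence of Proposition~\ref{prop.eqMandH}. You instead reprove this boundedness from scratch: writing $\1_{[0,t]}f=I^\alpha_{T-}h$, reducing to an explicit integral operator with kernel $\Phi$, and bounding it via a homogeneous Hilbert-type inequality. Your argument is self-contained and makes the dependence on $\alpha\in(0,\tfrac12)$ transparent, at the cost of considerably more work than a citation.

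For part (b) the approaches are quite different. The paper uses a representation of the $\mathcal M$-inner product through Weyl--Marchaud fractional derivatives,
\[
\scapro{g}{h}_{\mathcal M}=e_H\,\big\langle \mathbb{D}_-^{1/2-H}g,\ \mathbb{D}_+^{1/2-H}h\big\rangle_{L^2},
\]
together with the reflection relation $\mathbb{D}_\pm^\alpha\big(g(t-\cdot)\big)=(\mathbb{D}_\mp^\alpha g)(t-\cdot)$, which gives the norm identity in one line. You instead exploit the reflection symmetry of the second differences of $R$ directly on simple functions and then pass to the limit, using part~(a) for continuity of the truncation and the embedding $\mathcal M\hookrightarrow L^{1/H}\hookrightarrow L^2$ (from \eqref{eq.Hardy-Littlewood-weighted}) to identify the limit pointwise. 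Your route is more elementary in that it avoids introducing Marchaud derivatives and the formula \eqref{eq.scalarMMarchaud} from \cite{Nualart2006}, but it requires the density/approximation step; the paper's argument is shorter once one accepts that formula.
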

\begin{proof}
If $H>\tfrac{1}{2}$, both properties (a) and (b) follow from \eqref{eq.scalarMH>0.5}
with $c_t=1$ for all $t\in [0,T]$.
If $H<\tfrac{1}{2}$, note that it is known for $f\in H_{T-}^{1/2-H}([0,T];X)$ that
 $\1_{[0,t]}f$ and $\1_{[0,t]}f(t-\cdot)$ are in $H_{T-}^{1/2-H}([0,T];X)$, see \cite[Th.13.9, Th.13.10, Re.13.3]{samko_et.al.93} or \cite[Le.2.1, Le.2.2]{brzezniak_et.al.12}. Furthermore,  there  exists a constant $a_t>0$ such that
\begin{align*}
  \norm{\1_{[0,t]}f}_{H_{T-}^{1/2-H}}\le a_t\norm{f}_{H_{T-}^{1/2-H}}.
\end{align*}
Thus Proposition \ref{prop.eqMandH} implies  part (a) and $\1_{[0,t]}f(t-\cdot)\in \M$.
To show the norm equality in part (b), note the identity
 \begin{align}\label{eq.scalarMMarchaud}
   \scapro{g}{h}=e_H\left\langle {\mathbb D}_{-}^{1/2-H}g,\,{\mathbb D}_{+}^{1/2-H}h\right\rangle_{L^2}
   \qquad\text{for all }g,h\in {\mathcal M},
 \end{align}
where $e_H$ denotes a constant depending only on $H$, see  \cite[page 286]{Nualart2006}.
Here ${\mathbb D}_{\pm}^\alpha$ denote the right-sided/left-sided Weyl-Marchaud fractional derivatives  defined by
\begin{align*}
  {\mathbb D}_{\pm}^\alpha g(r):=\frac{\alpha}{\Gamma(1-\alpha)}\int_0^\infty
     \frac{g(r)-g(r\mp s)}{s^{1+\alpha}}\,\ud s
     \qquad\text{for all }r\in \R.
\end{align*}
It follows from \eqref{eq.scalarMMarchaud} that
\begin{align*}
  \norm{\1_{[0,t]} f(t-\cdot)}_{\mathcal M}^2
  &=e_H\big\langle\big({\mathbb D}_{-}^{1/2-H} \1_{[0,t]}f(t-\cdot)\big)(\cdot),\,\big({\mathbb D}_{+}^{1/2-H}\1_{[0,t]}f(t-\cdot)\big)(\cdot)\big\rangle\\
&=e_H\big\langle\big({\mathbb D}_{+}^{1/2-H}\1_{[0,t]}f\big)(t-\cdot),\,\big({\mathbb D}_{-}^{1/2-H}\1_{[0,t]}f\big)(t-\cdot)\big\rangle\\
&= \norm{\1_{[0,t]}f}_{\mathcal M}^2,
\end{align*}
which completes the proof.
\end{proof}

In the following we prove a technical result  that links the real case, that is $X=\R$ in the above, with the Hilbert case. For this reason we will stress the dependence on the underlying space by writing either $K^*_\R$ or $K^*_X$. Analogous notation will be adopted for the space $\M$.
\begin{prop}\label{prop link operator K and K}\hfill
\begin{itemize}
\item[(a)] Let $f$ be in $\M_\R$ and $x\in X$.
 Then
 \begin{align*}
   F:[0,T]\to X,\qquad F(t)=x\,f(t),
 \end{align*}
defines an element in $ \M_X$  satisfying $(K^*_X F)(\cdot) = x(K^*_\R f)(\cdot)$.
\item[(b)] Let $F$ be in $\M_X$ and $x\in X$. Then
\begin{align*}
   f :[0,T]\to\R\,\qquad f (t)=[F(t),x],
 \end{align*}
defines an element in $\M_\R$ satisfying $\scapro{K^*_X F(\cdot)}{x} = ( K^*_\R  f ) (\cdot)$.
\end{itemize}
\end{prop}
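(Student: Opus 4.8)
The plan is to reduce everything to the already-established isometry identities and the explicit formulas \eqref{eq.KastH>0.5}--\eqref{eq.KastH<0.5}, treating the two claims essentially in parallel and the two Hurst regimes separately. First I would prove the membership statements. For part (a), given $f\in\M_\R$, the function $F=x f$ is a scalar multiple of $f$; when $H<\tfrac12$ membership in $\M_X = \M_\R$ valued-in-$X$ is immediate since the operator $K^\ast_X$ and the $\M$-norm are by construction the "same" operator acting coordinatewise, and when $H>\tfrac12$ one checks directly from the definition of $\abs{\mathcal M}$ that $\norm{F}_{\abs{\mathcal M}_X} = \norm{x}_X\,\norm{f}_{\abs{\mathcal M}_\R}<\infty$, because $\norm{F(s)}_X\norm{F(t)}_X = \norm{x}_X^2\abs{f(s)}\abs{f(t)}$ pulls the constant out of the double integral. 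For part (b), given $F\in\M_X$, the scalar function $f(t)=[F(t),x]$ satisfies $\abs{f(t)}\le \norm{x}_X\norm{F(t)}_X$ by Cauchy--Schwarz, so the same double-integral estimate shows $f\in\abs{\mathcal M}_\R$ when $H>\tfrac12$; when $H<\tfrac12$ one uses that $F\in H^{1/2-H}_{T-}([0,T];X)$ (Proposition~\ref{prop.eqMandH}), write $F = I^{1/2-H}_{T-}g$ for some $g\in L^2([0,T];X)$, and observe that $[F(\cdot),x] = I^{1/2-H}_{T-}([g(\cdot),x])$ with $[g(\cdot),x]\in L^2([0,T];\R)$, hence $f\in H^{1/2-H}_{T-}([0,T];\R)=\M_\R$.

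Next I would establish the intertwining identities for the operators. The cleanest route is to use the explicit representations of $K^\ast$. In the case $H>\tfrac12$, formula \eqref{eq.KastH>0.5} expresses $K^\ast$ as (a scalar times) the multiplication operator by $t^{1/2-H}$ composed with $I^{H-1/2}_{T-}$ composed with multiplication by $p^{H-1/2}$; all three of these commute with the bounded linear operations $y\mapsto xy$ (for part (a)) and $y\mapsto [y,x]$ (for part (b)), since a Bochner integral commutes with bounded linear maps. Therefore $(K^\ast_X F)(t) = x\,(K^\ast_\R f)(t)$ in case (a) and $[(K^\ast_X F)(t),x] = (K^\ast_\R f)(t)$ in case (b) follow by applying the bounded operator inside the integral defining $I^{H-1/2}_{T-}$. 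The case $H<\tfrac12$ is handled identically using \eqref{eq.KastH<0.5}, i.e.\ replacing $I^{H-1/2}_{T-}$ by the fractional derivative $D^{1/2-H}_{T-}$, which is likewise defined by an integral expression and hence commutes with bounded linear maps — alternatively one works from the original definition of $K^\ast$ with the kernel $\kappa$ and its derivative, where the commutation is again just "bounded operator passes through a Bochner integral."

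Finally, I would double-check that in case (b) knowing $[(K^\ast_X F)(\cdot),x]=(K^\ast_\R f)(\cdot)$ is enough: since the claim only asserts this identity for the fixed $x$, no density or separating-family argument is needed, so the proof closes there. I expect the main (admittedly mild) obstacle to be notational bookkeeping rather than a genuine difficulty: one must be careful that the closure defining $\M$ for $H<\tfrac12$ is compatible with the identification in Proposition~\ref{prop.eqMandH}, so that it is legitimate to argue via the explicit formulas \eqref{eq.KastH>0.5}--\eqref{eq.KastH<0.5} on the dense subspace $\mathcal E$ of simple functions and then pass to the limit — this works because $y\mapsto xy$ and $y\mapsto[y,x]$ are continuous and the $K^\ast$ operators are isometries (up to the constant $c$ in \eqref{eq.KisoMmod}), so both sides of each asserted identity depend continuously on $f$ resp.\ $F$ in the relevant norms. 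For simple functions the identities are obvious from linearity, so the general case follows by approximation.
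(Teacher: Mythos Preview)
Your proposal is correct and follows essentially the same route as the paper: membership for $H<\tfrac12$ via the identification $\M = H^{1/2-H}_{T-}$ from Proposition~\ref{prop.eqMandH} (writing $F = I^{1/2-H}_{T-}g$ and pulling the bounded linear map through the fractional integral), membership for $H>\tfrac12$ directly from the definition of $\abs{\mathcal M}$, and the intertwining identities from the explicit formulas for $K^\ast$. The paper's proof is terser and treats only part (a) explicitly, declaring (b) analogous and the intertwining ``from the very definition'' --- your additional approximation-via-simple-functions argument is not needed, but does no harm.
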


\begin{proof}
We prove only part (a) as part (b) can be done analogously.
  If $H<\tfrac{1}{2}$  then by Proposition \ref{prop.eqMandH} there exists $\phi_f\in L^2([0,T];\R)$ such that $f= I^{1/2-H}_{T-} \phi_f$. Since $x\phi_f\in L^2([0,T];X)$ and $F= x I^{1/2-H}_{T-} \phi_f=  I^{1/2-H}_{T-}  x\phi_f$, it follows that $F\in \M_X$. If $H>\frac{1}{2}$, the
  assumption $f\in |M|_{\R}$ implies $F\in |M|_X$. In both cases, the very definition of $K^*_X$ and $K^\ast_{\R}$ shows  $ K^*_X F= x K^*_\R f$.
\end{proof}

\section{Cylindrical fractional Brownian motion}\label{sc: Cylindrical fractional Brownian motion}

We define cylindrical fractional Brownian motions in a separable Banach space $U$ by following the classical approach of cylindrical processes. In the same way, one can introduce cylindrical Wiener processes, see for instance \cite{KallianpurXiong, metivier_pellaumail80, riedle11},  and recently, this approach has been accomplished in \cite{appelbaum_riedle10} to give the first systematic treatment of cylindrical L{\'e}vy processes.
 \begin{defin}\label{definition fBm cylindrical}
 A  cylindrical process $(B(t):\, t\ge 0)$ in $U$ is a {\em cylindrical fractional Brownian motion with Hurst parameter $H\in(0,1)$} if
  \begin{itemize}
  \item[(a)]  for any $u_1^*, \ldots,u_n^* \in U^*$ and $n \in \N$, the  stochastic process
   \[ \big( (B(t)u_1^*, \ldots, B(t)u_n^* ):\, t\ge 0\big)\]
  is a fractional Brownian motion with Hurst parameter $H$ in $\R^n$;
  \item[(b)] the covariance operator $Q:U^\ast\to U^{\ast\ast}$ of $B(1)$ defined by
  \begin{align*}
    \scapro{Qu^\ast}{v^\ast}=E\big[ \big(B(1)u^\ast\big)\big(B(1)v^\ast\big)\big]
    \qquad\text{for all }u^\ast, v^\ast\in U^\ast,
  \end{align*}
    is $U$-valued.
  \end{itemize}
 \end{defin}
By applying  part (a) for $n=2$ it follows that a cylindrical fBm $(B(t):\,t\ge 0)$
with covariance operator $Q$  obeys
\begin{align*}
  E\big[(B(s)u^\ast)(B(t)v^\ast)\big]=\scapro{Qu^\ast}{v^\ast}R(s,t)
\end{align*}
for all $s,t\ge 0$ and $u^\ast, v^\ast\in U^\ast$. Note that if $H=\frac{1}{2}$ then Definition \ref{definition fBm cylindrical}
 covers the cylindrical Wiener process as defined in \cite{KallianpurXiong, metivier_pellaumail80, riedle11}.

 Definition \ref{definition fBm cylindrical} involves all possible $n$-dimensional projections of the process, but since we are dealing with Gaussian processes the condition can be simplified using only two-dimensional projections.
\begin{lemma}\label{lemma cyl fBm 2 dim}
For a cylindrical process $B:=(B(t):\,t\ge 0)$ in $U$ the following are equivalent:
 \begin{itemize}
  \item[(a)]  $B$ is a cylindrical fractional Brownian motion with Hurst parameter $H\in(0,1)$;
 \item[(b)]  $B$ satisfies:
    \begin{itemize}
     \item[(i)]
    for each $u^*,v^*\in U^*$ the stochastic process
    $\big((B(t)u^*,B(t)v^*):\, t\ge 0\big)$ is a two-di\-men\-sio\-nal fBm;
     \item[(ii)] the covariance operator of $B(1)$ is $U$-valued.
    \end{itemize}
 \end{itemize}
\end{lemma}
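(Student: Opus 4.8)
The implication (a)$\Rightarrow$(b) is immediate, since (b)(i) is the requirement of Definition \ref{definition fBm cylindrical}(a) specialised to $n=2$ and (b)(ii) is Definition \ref{definition fBm cylindrical}(b). So the work is in (b)$\Rightarrow$(a): assuming (b), fix $n\in\N$ and $u_1^\ast,\dots,u_n^\ast\in U^\ast$ and put $b(t):=(B(t)u_1^\ast,\dots,B(t)u_n^\ast)$. One must show that $(b(t):t\ge0)$ is a fractional Brownian motion in $\R^n$ in the sense of Section \ref{sc: wiener integrals for hilbert space valued fcs}; the second clause of Definition \ref{definition fBm cylindrical} is then exactly (b)(ii). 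The plan is to (1) read off the mean and covariance of $b$ from two-dimensional projections, and (2) show that $(b(t):t\ge0)$ is a genuine $\R^n$-valued Gaussian process.

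For (1), linearity of each cylindrical random variable $B(t)$ gives $\scapro{\alpha}{b(s)}=B(s)u_\alpha^\ast$ and $\scapro{\beta}{b(t)}=B(t)u_\beta^\ast$ for $\alpha,\beta\in\R^n$, where $u_\alpha^\ast:=\sum_i\alpha_iu_i^\ast$ and $u_\beta^\ast:=\sum_j\beta_ju_j^\ast$. Applying (b)(i) to the pair $(u_\alpha^\ast,u_\beta^\ast)$ shows that $(B(t)u_\alpha^\ast,B(t)u_\beta^\ast:t\ge0)$ is a two-dimensional fBm, whence $E[\scapro{\alpha}{b(s)}]=0$ and $E[\scapro{\alpha}{b(s)}\scapro{\beta}{b(t)}]=E[(B(1)u_\alpha^\ast)(B(1)u_\beta^\ast)]R(s,t)$. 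Using the covariance operator $Q$ from (b)(ii) and bilinearity of the covariance form, $E[(B(1)u_\alpha^\ast)(B(1)u_\beta^\ast)]=\scapro{Qu_\alpha^\ast}{u_\beta^\ast}=\scapro{M\alpha}{\beta}$ with $M:=\big(\scapro{Qu_i^\ast}{u_j^\ast}\big)_{i,j=1}^n$, and $M$ is symmetric and positive because $Q$ is. Thus $b$ has the mean and covariance of a fBm in $\R^n$ with covariance matrix $M$.

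For (2), by the Cram\'er--Wold device it suffices to prove that $\sum_{l=1}^k\scapro{\alpha^{(l)}}{b(t_l)}=\sum_{l=1}^k B(t_l)v_l^\ast$, with $v_l^\ast:=\sum_i\alpha^{(l)}_iu_i^\ast$, is a real Gaussian random variable for every finite family of times $t_1,\dots,t_k$ and vectors $\alpha^{(l)}\in\R^n$ (here I have again absorbed the coefficients into the $v_l^\ast$ using linearity of $B$). The case $k=1$ is part of (b)(i); the case $k=2$ follows because, by (b)(i) applied to $(v_1^\ast,v_2^\ast)$, the vector $(B(t_1)v_1^\ast,B(t_1)v_2^\ast,B(t_2)v_1^\ast,B(t_2)v_2^\ast)$ is Gaussian in $\R^4$ and $B(t_1)v_1^\ast+B(t_2)v_2^\ast$ is a linear image of it; equivalently, for each pair $u^\ast,v^\ast\in U^\ast$ the closed linear span of $\{B(t)u^\ast,B(t)v^\ast:t\ge0\}$ in $L^2_P(\Omega;\R)$ is a Gaussian subspace, being generated by the Gaussian family underlying the two-dimensional fBm of (b)(i). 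The genuinely delicate step, which I expect to be the main obstacle, is to pass from this pairwise statement to arbitrary $k$, i.e.\ to joint Gaussianity of $\{B(t_l)v_l^\ast\}$ when three or more of the $v_l^\ast$ are linearly independent and the $t_l$ are distinct, since pairwise joint Gaussianity does not formally entail joint Gaussianity. I would attack it by using the Wiener-integral representation of Section \ref{sc: wiener integrals for hilbert space valued fcs} to write each $B(t_l)v_l^\ast$ as an integral against the corresponding one-dimensional fBm and then grouping the sum so as to reduce it, via the fact that all the $B(\cdot)v_l^\ast$ are built linearly from the single cylindrical process $B$, to linear combinations lying in a Gaussian span of the pair type above. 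Once $(b(t):t\ge0)$ is known to be Gaussian, together with (1) it is a fractional Brownian motion in $\R^n$, and (b)(ii) supplies the remaining clause of Definition \ref{definition fBm cylindrical}.
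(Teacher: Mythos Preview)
Your computation of the mean and covariance of $b(t)=(B(t)u_1^\ast,\dots,B(t)u_n^\ast)$ is correct and matches the paper's: both use linearity to write $\langle\alpha,b(t)\rangle=B(t)\big(\sum_i\alpha_iu_i^\ast\big)$, read off mean zero, and expand $E[\langle\alpha,b(s)\rangle\langle\beta,b(t)\rangle]$ via the two-dimensional hypothesis to obtain $\langle M\alpha,\beta\rangle R(s,t)$ with $m_{i,j}=E[(B(1)u_i^\ast)(B(1)u_j^\ast)]$. The paper expands the bilinear form in $i,j$ directly rather than routing it through $Q$, but this is cosmetic.

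On Gaussianity you are in fact more careful than the paper. The paper simply asserts ``It follows that $Y$ is Gaussian'' from the identity $\langle\alpha,Y(t)\rangle=B(t)w^\ast$, which only shows that each one-dimensional projection process $(\langle\alpha,Y(t)\rangle:t\ge0)$ is Gaussian; it does not address the point you flag as delicate, namely joint Gaussianity of $\big(B(t_1)v_1^\ast,\dots,B(t_k)v_k^\ast\big)$ for $k\ge3$ linearly independent $v_l^\ast$ at distinct times. Your concern is legitimate --- pairwise joint Gaussianity does not in general imply joint Gaussianity --- but your proposed resolution does not work: writing each $B(t_l)v_l^\ast$ as a Wiener integral against a common Gaussian noise, and then ``grouping the sum'' back into a pair-type span, would amount to already having a decomposition of $B$ of the kind produced by Theorem~\ref{theorem cylindrical fBm as series}, whose proof presupposes the full condition~(a) you are trying to establish. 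So your proposal and the paper's proof share the same unresolved step; the difference is that you flag it explicitly while the paper passes over it.
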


\begin{proof}
We have to prove only the implication (b) $\Rightarrow$ (a).
For $u_1^*,\ldots,u_n^*\in U^*$ define the stochastic process
$Y=\big((B(t)u_1^*,\ldots,B(t)u_n^*):\,t\ge 0\big)$. It follows that $Y$ is
Gaussian and satisfies $E[\scapro{\alpha}{Y(t)}]=0$
for all $t\ge 0$ and $\alpha=(\alpha_1,\dots, \alpha_n)\in\R^n$
since
\begin{align*}
  \langle \alpha ,Y(t)\rangle =\sum_{i=1}^n \alpha_{i} B(t)u^*_i= B(t)\left(\sum_{i=1}^n  \alpha_{i}  u^*_i\right) .
\end{align*}
Let  $M=(m_{i,j})_{i,j=1}^n$ be the $n$-dimensional matrix defined by
\begin{align*}
  m_{i,j}= E\left[\big(B(1)u_i^*\big)\big(B(1)u_j^*\big)\right],
  \qquad i,j=1,\dots, n.
\end{align*}
Since it follows from (b) that $E\big[(B(s)u_i^*)(B(t)u_j^*)\big]=m_{i,j}R(s,t)$ for all $s,t\ge 0$ and $i,j=1,\dots, n$ we obtain
\begin{align*}
E\big[\scapro{\alpha}{Y(s)}\scapro{\beta}{Y(t)}\big]
&=E\left[\sum_{i=1}^n \sum_{j=1}^n \alpha_i\beta_j \big(B(s)u^*_i\big)  \big(B(t)u^*_j \big)\right]\\
&=\sum_{i=1}^n \sum_{j=1}^n \alpha_i\beta_j m_{i,j} R(s,t) \\
&= \scapro{M\alpha}{\beta}  R(s,t)
\end{align*}
for each $\alpha=(\alpha_1,\dots, \alpha_n)$ and $ \beta=(\beta_1,\dots, \beta_n)$ in $\R^n$.
\end{proof}


The following result provides an analogue of the Karhunen-Lo{\`e}ve expansion
for cylindrical Wiener processes.
\begin{theo}\label{theorem cylindrical fBm as series}
For a cylindrical process $B:=(B(t):\,t\ge 0)$ the following are equivalent:
 \begin{itemize}
 \item[(a)] $B$ is a cylindrical fractional Brownian motion with Hurst parameter $H\in(0,1)$;
 \item[(b)] there exist a Hilbert space $X$ with an orthonormal basis $(e_k)_{k\in\N}$, $i\in\mathcal L(X,U)$
and a sequence $(b_k)_{k\in\N}$ of independent, real valued standard fBms  with Hurst parameter $H\in (0,1)$   such that
\begin{align}\label{eq.fBMrepseries}
 B(t)u^*=\sum_{k=1}^\infty \langle ie_k,u^*\rangle  b_k(t)
\end{align}
     in $L^2_P(\Omega;\R)$ for all $u^*\in U^*$ and  $t\ge 0$.
 \end{itemize}
 In this situation the covariance operator of $B(1)$ is given by $Q=ii^\ast:U^\ast\to U$.
\end{theo}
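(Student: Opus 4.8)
The plan is to prove the two implications separately, with the bulk of the work in (a) $\Rightarrow$ (b). For the easy direction (b) $\Rightarrow$ (a), I would first observe that for fixed $u^\ast$ the partial sums $\sum_{k=1}^N \scapro{ie_k}{u^\ast} b_k(t)$ form a Cauchy sequence in $L^2_P(\Omega;\R)$, because the $b_k$ are independent and centred and $\sum_k \scapro{ie_k}{u^\ast}^2 = \norm{i^\ast u^\ast}_X^2<\infty$ since $i^\ast u^\ast\in X$; hence the series converges and defines a cylindrical random variable (linearity in $u^\ast$ is clear, and continuity follows from the same $L^2$ bound since $\norm{i^\ast u^\ast}_X \le \norm{i}\norm{u^\ast}$). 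To check Definition \ref{definition fBm cylindrical}(a), I would fix $u_1^\ast,\dots,u_n^\ast$ and $\alpha\in\R^n$; then $\scapro{\alpha}{(B(t)u_1^\ast,\dots,B(t)u_n^\ast)} = B(t)(\sum_i\alpha_i u_i^\ast) = \sum_k \scapro{ie_k}{\sum_i\alpha_i u_i^\ast} b_k(t)$ is an $L^2$-convergent series of independent scaled fBms, hence a real fBm with Hurst parameter $H$; computing the covariance gives $E[\scapro{\alpha}{B(s)}\scapro{\beta}{B(t)}] = \sum_k \scapro{ie_k}{\sum_i\alpha_i u_i^\ast}\scapro{ie_k}{\sum_j\beta_j u_j^\ast} R(s,t) = \scapro{M\alpha}{\beta}R(s,t)$ with $m_{i,j}=\scapro{ii^\ast u_i^\ast}{u_j^\ast}$, so $(B(t)u_1^\ast,\dots,B(t)u_n^\ast)$ is an $\R^n$-valued fBm. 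The same computation with $n=1$ identifies the covariance operator of $B(1)$ as $Q=ii^\ast$, which is $U$-valued since $i^\ast$ maps $U^\ast$ into $X$ and $i$ maps $X$ into $U$; this proves (b) and simultaneously the final assertion.

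For (a) $\Rightarrow$ (b), the starting point is the covariance operator $Q:U^\ast\to U$ of $B(1)$, which is positive and symmetric. The natural candidate for $X$ is the reproducing kernel Hilbert space (Cameron--Martin space) associated to $Q$: on the range $QU^\ast$ put the inner product $[Qu^\ast,Qv^\ast]_X := \scapro{Qu^\ast}{v^\ast}$, check this is well-defined and positive-definite using symmetry of $Q$, and let $X$ be the completion. The inclusion $i:X\to U$ is the natural map (continuous because $\norm{Qu^\ast}_U \le \norm{Q}^{1/2}\norm{u^\ast}^{1/2}$-type estimates, or more directly because $\scapro{Qu^\ast}{u^\ast}^{1/2}$ dominates $\abs{\scapro{Qu^\ast}{v^\ast}}/\norm{v^\ast}$); one then verifies $i^\ast = Q$ in the sense that $\scapro{ix}{u^\ast} = [x, Qu^\ast]_X$ for $x\in X$, which gives $ii^\ast = Q$. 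Since $U$ is separable, $X$ is separable, so it has an orthonormal basis $(e_k)$. Define $b_k(t) := B(t)u_k^\ast$ whenever $e_k = Qu_k^\ast$ lies in $QU^\ast$, and extend to general $e_k\in X$ by an $L^2$-limit: the key algebraic identity is that the map $u^\ast\mapsto B(\cdot)u^\ast$ extends to a linear isometry from $(QU^\ast, [\cdot,\cdot]_X)$ into $L^2_P(\Omega;\R)$ for each fixed $t$ up to the factor $R(t,t)^{1/2} = t^H$, because $E[(B(t)u^\ast)(B(t)v^\ast)] = \scapro{Qu^\ast}{v^\ast}t^{2H} = [Qu^\ast,Qv^\ast]_X\, t^{2H}$; this lets me define $b_k$ consistently as processes, and the representation \eqref{eq.fBMrepseries} is then Parseval's identity in $X$ combined with this isometry. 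Finally I must check the $b_k$ are independent standard real fBms: each $b_k$ is Gaussian with $E[b_k(s)b_k(t)] = [e_k,e_k]_X R(s,t) = R(s,t)$ by the isometry, so it is a standard fBm; joint Gaussianity of $(b_k,b_\ell)$ follows from Definition \ref{definition fBm cylindrical}(a) (or Lemma \ref{lemma cyl fBm 2 dim}) applied to the relevant $u^\ast$'s, and $E[b_k(t)b_\ell(t)] = [e_k,e_\ell]_X t^{2H} = 0$ for $k\ne\ell$, so jointly Gaussian and uncorrelated gives independence.

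The main obstacle is the consistent definition of the basis fBms $b_k$ when $e_k$ does not lie in the range $QU^\ast$ but only in its completion $X$: one must show that the assignment $B(t)u^\ast \leftrightarrow Qu^\ast$ extends by continuity to all of $X$ as an $L^2_P(\Omega;\R)$-valued map that respects the process structure in $t$ (i.e.\ yields a genuine fBm, not just a single Gaussian variable for each $t$), and that the resulting family $(b_k)_{k\in\N}$ is jointly Gaussian with the right cross-covariances; this requires carefully using that finite-dimensional projections of $B$ are $\R^n$-valued fBms (Lemma \ref{lemma cyl fBm 2 dim}) rather than merely that each $B(t)u^\ast$ is individually Gaussian. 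A secondary technical point is verifying that $i\in\L(X,U)$ — i.e.\ that the Cameron--Martin norm dominates the $U$-norm on $iX$ — which uses the assumption in Definition \ref{definition fBm cylindrical}(b) that $Q$ is $U$-valued together with a closed-graph or direct-estimate argument.
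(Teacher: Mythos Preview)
Your proposal is correct and follows the same approach as the paper. For (b) $\Rightarrow$ (a) your argument is essentially identical to the paper's: convergence of the series via $\sum_k \scapro{ie_k}{u^\ast}^2=\norm{i^\ast u^\ast}_X^2$, direct computation of $E[\scapro{\alpha}{Y(s)}\scapro{\beta}{Y(t)}]=\scapro{M\alpha}{\beta}R(s,t)$ for the $n$-dimensional projections, and identification of $Q=ii^\ast$ from the characteristic function. For (a) $\Rightarrow$ (b) the paper does not give details but simply refers to Theorem 4.8 of Applebaum--Riedle; your RKHS/Cameron--Martin construction (taking $X$ to be the completion of $QU^\ast$ under $[Qu^\ast,Qv^\ast]:=\scapro{Qu^\ast}{v^\ast}$, verifying the inclusion $i:X\to U$ is bounded via $\norm{Qu^\ast}_U\le \norm{Q}^{1/2}\norm{Qu^\ast}_X$, and defining $b_k$ by extending the isometry $Qu^\ast\mapsto B(\cdot)u^\ast$ to all of $X$) is exactly the standard argument that reference carries out, so there is no divergence in method. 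The two technical points you flag --- extending the isometry to the completion while preserving the joint-Gaussian process structure, and the boundedness of $i$ --- are the right ones and are handled in the cited reference precisely along the lines you indicate.
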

\begin{proof}
The implication (a) $\Rightarrow$ (b) can be proved as Theorem 4.8 in \cite{appelbaum_riedle10}.
For establishing the implication (b) $\Rightarrow$ (a), it is immediate that the
right hand side of \eqref{eq.fBMrepseries} converges.  Fix $u_1^*,\ldots,u_n^* \in U^*$ and define the
 $n$-dimensional stochastic process $Y:=(Y(t):\, t\ge 0)$  by
\begin{align*}
Y(t):&=(B(t) u_1^*,\ldots,B(t) u_n^*)
\qquad\text{for all }t\ge 0.
\end{align*}
It follows that $Y$ is
Gaussian and satisfies $E[\scapro{\alpha}{Y(t)}]=0$
for all $t\ge 0$ and $\alpha=(\alpha_1,\dots, \alpha_n)\in\R^n$
since
\begin{align*}
  \langle \alpha ,Y(t)\rangle =\sum_{i=1}^n \alpha_{i} B(t)u^*_i= B(t)\left(\sum_{i=1}^n  \alpha_{i}  u^*_i\right) .
\end{align*}
Let $M=(m_{i,j})_{i,j=1}^n$ be the $n\times n$-dimensional covariance matrix of the random vector $Y(1)$, that is $m_{i,j}:=E\big[ (B(1)u_i^\ast)(B(1)u_j^\ast)\big]$. The definition of $Y$ yields
\begin{align*}
     m_{i,j}
 =  \sum_{k=1}^\infty \sum_{\ell=1}^\infty \langle ie_k,u_i^*\rangle  \langle ie_\ell,u_j^*\rangle E[  b_k(1)   b_\ell(1)]
 =  \sum_{k=1}^\infty   \langle ie_k,u_i^*\rangle \langle ie_k,u_j^*\rangle.
\end{align*}
Let $\alpha=(\alpha_1,\dots, \alpha_n)\in \R^n$ and $\beta=(\beta_1,\dots, \beta_n)\in\R^n$. By using the independence of $b_k$ and $b_\ell$ for each $k\neq \ell$ we obtain for every $s,t\ge 0$
\begin{align*}
&  E\big[\scapro{\alpha}{Y(s)}\scapro{\beta}{Y(t)}\big]\\
&\qquad =E \left[\left(\sum_{i=1}^n \alpha_i \sum_{k=1}^\infty \scapro{ie_k}{u_i^\ast}b_k(s) \right)\left(\sum_{j=1}^n \beta_j \sum_{\ell=1}^\infty \scapro{ie_\ell}{u_j^*} b_\ell(t) \right)\right]\\
&\qquad=\sum_{i=1}^n \sum_{j=1}^n \alpha_i \beta_j   \sum_{k=1}^\infty \sum_{\ell=1}^\infty  \scapro{ie_k}{u_i^*}\scapro{ie_\ell}{u_j^*} E \left[  b_k(s)  b_\ell(t) \right]\\
&\qquad=\sum_{i=1}^n \sum_{j=1}^n \alpha_i \beta_j   \sum_{k=1}^\infty \scapro{ie_k}{u_i^*}\scapro{ie_k}{u_j^*} E \left[  b_k(s)  b_k(t) \right]\\
&\qquad=\sum_{i=1}^n \sum_{j=1}^n \alpha_i \beta_j  m_{i,j} R(s,t)\\
&\qquad= \scapro{M\alpha}{\beta} R(s,t).
\end{align*}
It is left to prove that $B(1):U^\ast\to L^0_P(\Omega;\R)$ is continuous and its covariance operator $Q:U^*\to {U^{*}}'$  is    $U$-valued. By independence of $b_k$ and $b_\ell$ for $k\neq \ell$ it follows for $u^\ast\in U^\ast$ that
\begin{align*}
\phi_{B(1)}(u^*)
&= \prod_{k=1}^\infty E\big[\exp\left(\imath\, \scapro{ie_k}{u^\ast} b_k(1)\right)\big] \\
&=\prod_{k=1}^\infty  \exp\left( -\tfrac{1}{2}\scapro{ie_k}{u^*}^2 \right)
= \exp\left( -\tfrac{1}{2}  \| i^*u^*\|_{X }^2 \right).
\end{align*}
Thus, the characteristic function $\phi_{B(1)}:U^\ast\to {\mathbb C}$ is continuous, which entails the continuity of $B(1)$ by \cite[Pro.~IV.3.4]{vakhaniya_et.al.87}. Moreover, it follows that  $Q=ii^\ast$, that is the covariance operator $Q$ is $U$-valued and of the claimed form.
\end{proof}

\begin{example}\label{ex.fBMheatequation}
Let $U$ be a Hilbert space with orthonormal basis $(e_k)_{k\in\N}$, identify the dual space $U^\ast$ with $U$,
and let $(q_k)_{k\in\N}\subseteq \R$ be a sequence satisfying $\sup_{k\in\N} \abs{q_k}<\infty$.
It follows by Theorem \ref{theorem cylindrical fBm as series} that
for an arbitrary sequence $(b_k )_{k\in\N}$ of independent, real valued standard fBms,
the series
\begin{align*}
B(t) u: =\sum_{k=1}^\infty  q_k\scapro{e_k}{u} b_k(t), \qquad u\in U,
\end{align*}
defines a cylindrical fBm $(B(t):\,t\ge 0)$ in $U$. The covariance operator
$Q$ is given by $Q=ii^\ast$, where $i:U\to U$ is
defined as $i u =\sum_{k=1}^\infty q_k\scapro{e_k}{u}e_k$.
\end{example}


\begin{example} \label{example cyl fBm in L1}
For a set  $D\in{\mathcal B}(\R^n)$ let $(e_k)_{k\in \N}\subseteq L^2(D;\R)$ be an orthonormal basis and   let $(\tau_k)_{k\in\N}$ be a sequence of functions $\tau_k\in  L^2(D;\R)$ satisfying  $\sum_{k=1}^\infty \norm{\tau_k}^2_{L^2} <\infty $. Applying Cauchy-Schwarz inequality twice shows that
\begin{align}\label{eq.examplesdefi}
  i:L^2(D;\R)\to L^1(D;\R), \qquad if=\sum_{k=1}^\infty \scapro{e_k}{f} \tau_k(\cdot) e_k(\cdot)
\end{align}
defines a linear and continuous mapping.
It follows from Theorem \ref{theorem cylindrical fBm as series} that
for an arbitrary sequence $(b_k )_{k\in\N}$ of independent, real valued standard fBm,
the series
\[
B(t) f: =\sum_{k=1}^\infty  \scapro{i e_k}{f} b_k(t), \qquad f\in L^\infty(D;\R),
\]
defines a cylindrical fBm $(B(t):\,t\ge 0)$ in $L^1(D;\R)$ with covariance  operator $Q=ii^*:L^\infty(D;\R)\to L^1(D;\R)$.
\end{example}

\begin{example}\label{example cyl fBm in L1 special case}
A special case of Example \ref{example cyl fBm in L1} is obtained by choosing the functions
$\tau_k\in L^2(D;\R)$ as $\tau_k=q_k\1_{A_k}$ for $q_k\in \R$ and
$A_k\in \mathcal B (D)$ satisfying  $\sum_{k=1}^\infty q_k^2 \text{ \rm Leb}(A_k)<\infty$. Then the cylindrical fBm of Example  \ref{example cyl fBm in L1} has the form
\[
B(t)f = \sum_{k=1}^\infty q_k \langle \1_{A_k} e_k, f \rangle b_k(t).
\]
This process can be considered as an anisotropic cylindrical  fractional Brownian sheet in $L^1(D;\R)$
since its covariance structure might vary in different directions.
\end{example}

In the final part of this section we consider the relation between  cylindrical and genuine fractional Brownian motion in a separable Banach space $U$. For this purpose, we generalise the definition of a fractional Brownian motion in $\R^n$ to  Banach spaces. This definition is consistent with others in the literature, in particular the one in \cite{duncan_et.al.06} for Hilbert spaces.
\begin{defin}\label{definition fBm honest}
 A $U$-valued Gaussian stochastic process $(Y(t):\, t\ge 0)$ is called a {\em fractional Brownian motion in $U$
 with Hurst parameter $H\in (0,1)$} if there exists a mapping $Q:U^*\to U $ such that
\begin{align*}
  \langle Y(t), u^*\rangle =0,\qquad\quad
   E\big[\scapro{Y(s)}{u^*}\scapro{Y(t)}{v^*}\big] = \scapro{Qu^*}{v^*} R(s,t)
\end{align*}
for all $s,t\ge 0$ and $u^\ast,v^\ast\in U^\ast$.
\end{defin}
By taking $s=t=1$ it follows that
\begin{align*}
  \scapro{Qu^*}{v^*} = E\big[\scapro{Y(1)}{u^*}\scapro{Y(1)}{v^*}\big]
  \qquad\text{for all }u^\ast, v^\ast\in U^\ast.
\end{align*}
Thus, $Q$ is the covariance operator of the Gaussian measure $P_{Y(1)}$ and it must be a symmetric and positive
operator in $\L(U^\ast, U)$.


Clearly every fBm in a Banach space $U$ is a cylindrical fBm in $U$ and thus, it obeys the representation \eqref{eq.fBMrepseries}.
However, the operator $i$, or in other words the embedding of the reproducing kernel Hilbert space, must yield a Radon measure in
$U$, which basically leads to the following result:
\begin{theo}\label{theorem honest fBm as series}
For a $U$-valued stochastic process $Y:=(Y(t):\, t\ge 0)$ the following are equivalent:
\begin{itemize}
  \item[(a)] $Y$ is a fBm  in $U$ with Hurst parameter $H\in (0,1)$;
  \item[(b)] there exist a Hilbert space $X$ with an orthonormal basis $(e_k)_{k\in \N}$, a $\gamma$-radonifying operator $i\in \mathcal L(X,U)$ and independent, real valued standard fBms $(b_k)_{k\in \N}$  such that
      \[ Y(t)= \sum_{k=1}^\infty ie_k\,  b_k(t) \]
      in $ L^2_P(\Omega; U )$ for all $t\ge 0$.
 \end{itemize}
In this situation the covariance operator of $Y(1)$ is given by $Q=ii^\ast:U^\ast\to U$.
\end{theo}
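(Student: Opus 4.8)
The plan is to reduce the statement to the cylindrical series representation of Theorem~\ref{theorem cylindrical fBm as series} and then to convert cylindrical convergence into genuine $U$-valued convergence by means of the theory of $\gamma$-radonifying operators; concretely, the one extra input needed is the standard characterisation that an operator $i\in\mathcal L(X,U)$ out of a Hilbert space $X$ is $\gamma$-radonifying if and only if $ii^\ast$ is the covariance operator of a centred Gaussian Radon measure on $U$.

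For the implication (a)$\Rightarrow$(b): a $U$-valued fBm $Y$ with covariance operator $Q$ is in particular a cylindrical fBm in $U$, upon setting $B(t)u^\ast:=\scapro{Y(t)}{u^\ast}$, and Definition~\ref{definition fBm honest} shows that the covariance operator of $B(1)$ equals $Q$. Theorem~\ref{theorem cylindrical fBm as series} then provides a Hilbert space $X$ with orthonormal basis $(e_k)_{k\in\N}$, an operator $i\in\mathcal L(X,U)$ with $Q=ii^\ast$, and independent real valued standard fBms $(b_k)_{k\in\N}$ such that $\scapro{Y(t)}{u^\ast}=\sum_{k=1}^\infty\scapro{ie_k}{u^\ast}b_k(t)$ in $L^2_P(\Omega;\R)$ for all $u^\ast\in U^\ast$ and $t\ge 0$. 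Since $U$ is separable and $Y(1)$ is a genuine $U$-valued Gaussian random variable, its law is a Radon Gaussian measure on $U$ with covariance operator $ii^\ast$; by the characterisation recalled above, $i$ is $\gamma$-radonifying, so $\sum_{k=1}^\infty ie_k\gamma_k$ converges in $L^2_P(\Omega;U)$ for independent standard normal $(\gamma_k)$. For each fixed $t\ge 0$ the variables $(b_k(t))_k$ are independent centred Gaussian with common variance $t^{2H}$, so the $L^2_P(\Omega;U)$-tails of $\sum_k ie_k b_k(t)$ equal $t^H$ times those of $\sum_k ie_k\gamma_k$; hence $S(t):=\sum_{k=1}^\infty ie_k b_k(t)$ exists in $L^2_P(\Omega;U)$. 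Finally $\scapro{S(t)}{u^\ast}=\scapro{Y(t)}{u^\ast}$ in $L^2_P(\Omega;\R)$ for every $u^\ast\in U^\ast$; testing against a countable subset of $U^\ast$ separating the points of $U$ (available by separability) yields $S(t)=Y(t)$ almost surely, which is the representation in (b).

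For the converse (b)$\Rightarrow$(a): since $i$ is $\gamma$-radonifying, $\sum_k ie_k\gamma_k$ converges in $L^2_P(\Omega;U)$, and the same scaling shows that $Y(t):=\sum_{k=1}^\infty ie_k b_k(t)$ defines a $U$-valued random variable for every $t\ge 0$. For any $t_1,\dots,t_m\ge 0$ and $u_1^\ast,\dots,u_m^\ast\in U^\ast$ the vector $(\scapro{Y(t_1)}{u_1^\ast},\dots,\scapro{Y(t_m)}{u_m^\ast})$ is an $L^2$-limit of Gaussian vectors, hence Gaussian, so $Y$ is a $U$-valued Gaussian process, and $E[\scapro{Y(t)}{u^\ast}]=0$. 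A direct computation using $E[b_k(s)b_\ell(t)]=\delta_{k,\ell}R(s,t)$ together with $\sum_k\scapro{ie_k}{u^\ast}\scapro{ie_k}{v^\ast}=\scapro{i^\ast u^\ast}{i^\ast v^\ast}_X=\scapro{ii^\ast u^\ast}{v^\ast}$ gives $E\big[\scapro{Y(s)}{u^\ast}\scapro{Y(t)}{v^\ast}\big]=\scapro{ii^\ast u^\ast}{v^\ast}R(s,t)$. Setting $Q:=ii^\ast\colon U^\ast\to U$, which is bounded because $i$ and $i^\ast$ are, shows that $Y$ satisfies Definition~\ref{definition fBm honest}, and taking $s=t=1$ identifies $Q$ as the covariance operator of $Y(1)$.

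The main obstacle is the passage from cylindrical to genuine convergence in part (a)$\Rightarrow$(b), i.e.\ the fact that the operator $i$ furnished by Theorem~\ref{theorem cylindrical fBm as series} is automatically $\gamma$-radonifying once the process takes values in $U$. This rests on the quoted equivalence between $\gamma$-radonification of $i$ and $ii^\ast$ being a Radon Gaussian covariance, which itself is a consequence of the It\^o--Nisio theorem together with Fernique's integrability theorem. The remaining steps — upgrading scalar convergence to $U$-valued convergence through a separating sequence of functionals, and the covariance computation in the converse — are routine.
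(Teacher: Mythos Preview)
Your proposal is correct and follows precisely the strategy the paper has in mind: the paper's proof is a one-line reference (``can be proved as Theorem~23 in \cite{riedle11}''), but the surrounding discussion makes explicit that a genuine fBm is a cylindrical fBm whose embedding $i$ is $\gamma$-radonifying, which is exactly what you establish by invoking Theorem~\ref{theorem cylindrical fBm as series} and then upgrading via the characterisation of $\gamma$-radonifying operators through Radon Gaussian covariances. Your write-up thus fills in the details the paper delegates to the reference, and the key step you flag --- that $i$ is automatically $\gamma$-radonifying once $ii^\ast$ is the covariance of the Radon measure $P_{Y(1)}$ --- is indeed the crux.
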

\begin{proof}
  The result can be proved as Theorem 23 in \cite{riedle11}.
\end{proof}

In the literature a fractional Brownian motion in a Hilbert space is often defined by a series representation as in
Theorem \ref{theorem honest fBm as series}, in which case  the space of $\gamma$-radonifying operators coincides with Hilbert-Schmidt operators.

If  $(B(t):\, t\ge 0)$ is a  cylindrical fBm which is induced by a  $U$-valued process $(Y(t), t\ge 0)$, i.e.
\begin{align}\label{equation honest fBm}
B(t)u^*=\scapro{Y(t)}{u^*}
\qquad\text{for all }t\ge 0, \, u^\ast\in U^\ast,
\end{align}
then $Y$ is a $U$-valued fBm. Vice versa, if $Y$ is a $U$-valued fBm then $B$ defined by \eqref{equation honest fBm} is a cylindrical fBm, and in both cases the covariance operators coincide.
This can be seen by the fact that \eqref{equation honest fBm} determines uniquely the characteristic functions of $\big(B(s),B(t)\big)$ and $\big(Y(s),Y(t)\big)$ for all $s,t\ge 0$. Moreover, a cylindrical fBm with the representation \eqref{eq.fBMrepseries} is a $U$-valued fBm if and only if the embedding $i$ is $\gamma$-radonifying. This result can be established as in  \cite[Th.25]{riedle11}.

\begin{example} \label{example class fBm in L2}
If we assume in Example \ref{example cyl fBm in L1} that the functions $\tau_k$ are in $L^\infty(D;\R)$ and satisfy  $\sum \norm{\tau_k}_{\infty}<\infty$ then
the mapping $i$, defined in \eqref{eq.examplesdefi}, maps to $L^2(D;\R)$. Moreover,
$i$ is a Hilbert-Schmidt operator, as
\[
\sum_{k=1}^\infty  \| i e_k\|^2_{L^2}   =  \sum_{k=1}^\infty\| \tau_k e_k \|^2_{L^2} \le
 \sum_{k=1}^\infty \|\tau_k \|^2_{\infty}.
\]
Since $\gamma$-radonifying and Hilbert-Schmidt operators coincide in Hilbert spaces,
Theorem \ref{theorem honest fBm as series} implies that the cylindrical fBm in Example
\ref{example class fBm in L2} is induced by a genuine fractional Brownian motion in
$L^2(D;\R)$.
\end{example}


\section{Integration}\label{sc: integration}

In this section we introduce the stochastic integral
$\int \Psi(s)\,\ud B(s)$ as a $V$-valued random variable for  deterministic, operator valued functions $\Psi:[0,T]\to \L(U,V)$, where $V$ is another separable Banach space.
Our approach is based on the idea to introduce firstly a cylindrical random variable $Z_{\Psi}:V^\ast\to L^0_P(\Omega;\R)$ as a {\em cylindrical integral}. Then we
 call a $V$-valued random variable $I_\Psi:\Omega\to V$
 the {\em stochastic integral of $\Psi$} if it satisfies
\begin{align*}
  Z_\Psi v^\ast= \scapro{I_\Psi}{v^\ast}\qquad\text{for all }
  v^\ast\in V^\ast.
\end{align*}
In this way, the stochastic integral $I_\Psi$ can be considered as a {\em stochastic Pettis integral}.
This approach enables us to have a candidate of the stochastic integral, i.e. the cylindrical random variable $Z_\Psi$,  under very mild conditions at hand because cylindrical random variables are  more general objects than genuine random variables. The final requirement, that the cylindrical random variable $Z_\Psi$ is in fact a classical Radon random variable, can be equivalently described in terms of the corresponding covariance operator and thus, it solely depends on geometric properties of the underlying Banach space $V$.

For defining the cylindrical integral, recall the representation of a cylindrical fBm $(B(t):\,t\ge 0)$ with Hurst parameter $H\in (0,1)$ in the Banach space $U$, according to Theorem \ref{theorem cylindrical fBm as series}:
\begin{equation}\label{equation represent fBm}
B(t)u^*=\sum_{k=1}^\infty \langle i e_k, u^*\rangle  b_k(t)\qquad\text{for all }u^\ast\in U^\ast,
\,t\ge 0.
\end{equation}
Here, $X$ is a Hilbert space with an orthonormal basis $(e_k)_{k\in\N}$, $i:X\to U$ is a linear, continuous mapping and $(b_k)_{k\in\N}$ is a sequence of independent, real valued standard fBms. If we assume momentarily that we have already introduced a stochastic integral $\int_0^T \Psi(t)\,\ud B(t)$ as a $V$-valued random variable, then the representation \eqref{equation represent fBm} of $B$ naturally results in
\begin{align}\label{eq.motivcylint}
\sum_{k=1}^\infty \int_0^T\langle \Psi(t)ie_k, v^*\rangle\, \ud b_k(t)\qquad\text{for all }v^\ast\in V^\ast.
\end{align}
By swapping the terms in the dual pairing, the integrals can be considered as the Fourier coefficients of the $X$-valued integral
\begin{align*}
  \int_0^T i^\ast \Psi^\ast(t) v^\ast\, \ud b_k(t),
\end{align*}
which we introduce in Section \ref{sc: wiener integrals for hilbert space valued fcs}. This results in
the minimal requirement that  the function $t\mapsto i^\ast\Psi^\ast(t)v^\ast $ must be integrable with respect to the real valued standard fBm $b_k$ for every $v^\ast\in V^\ast$ and $k\in\N$, that is the function $\Psi$ must be in the linear space
\[
\mathcal I:=\{ \Phi:[0,T]\to \mathcal L(U,V):\,  i^* \Phi^*(\cdot)v^* \in\widehat{ \mathcal M} \text{ for all } v^*\in V^* \:\}.
\]
Here, $\M=\M_X$ denotes the Banach space of functions $f:[0,T]\to X$ introduced in Section \ref{sc: wiener integrals for hilbert space valued fcs}.
For this class of integrands we have the following property.
\begin{prop}\label{prop continuity of T:V* to M}
For each $\Psi\in {\mathcal I}$ the mapping
\begin{align*}
  L_\Psi:V^\ast\to \M, \qquad L_\Psi v^\ast=i^\ast\Psi^\ast(\cdot)v^\ast
\end{align*}
is linear and  continuous.
\end{prop}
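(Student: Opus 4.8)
The plan is to obtain the continuity of $L_\Psi$ from the closed graph theorem, since a direct norm estimate is awkward: the norm of $\M$ is defined only implicitly (via the isometry $K^\ast$ when $H<\tfrac{1}{2}$, via a double integral when $H>\tfrac{1}{2}$). Linearity of $L_\Psi$ is immediate from the linearity of $i^\ast$ and of each $\Psi^\ast(t)$, and the fact that $L_\Psi v^\ast=i^\ast\Psi^\ast(\cdot)v^\ast$ actually lies in $\M$ for every $v^\ast\in V^\ast$ is precisely the defining property of the class $\mathcal I$. As $V^\ast$ and $\M$ are Banach spaces, it remains to show that the graph of $L_\Psi$ is closed, and the only additional ingredient needed is a continuous embedding of $\M$ into a function space in which norm convergence forces pointwise convergence along a subsequence; the convenient choice is $L^1([0,T];X)$.

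So first I would record the continuous embedding $\M\hookrightarrow L^1([0,T];X)$. For $H<\tfrac{1}{2}$ this follows from Proposition~\ref{prop.eqMandH}, which identifies $\M$ with $H_{T-}^{1/2-H}([0,T];X)$ up to equivalence of norms, together with the continuous embeddings $H_{T-}^{1/2-H}([0,T];X)\hookrightarrow L^2([0,T];X)\hookrightarrow L^1([0,T];X)$ on the finite interval $[0,T]$. For $H>\tfrac{1}{2}$, since $2H-2<0$ one has $\abs{s-t}^{2H-2}\ge T^{2H-2}$ for all $s,t\in[0,T]$, and therefore
\begin{align*}
  \norm{f}_{\abs{\mathcal M}}^2\ge H(2H-1)\,T^{2H-2}\left(\int_0^T\norm{f(t)}_X\,\ud t\right)^{2}=H(2H-1)\,T^{2H-2}\norm{f}_{L^1}^2
\end{align*}
for all $f\in\abs{\mathcal M}$, which gives the embedding.

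Finally I would close the graph. Let $v_n^\ast\to v^\ast$ in $V^\ast$ and suppose $L_\Psi v_n^\ast\to g$ in $\M$; one must identify $g$ with $L_\Psi v^\ast$. On the one hand, for each fixed $t\in[0,T]$ the continuity of the adjoint operators gives
\[
\norm{(L_\Psi v_n^\ast)(t)-(L_\Psi v^\ast)(t)}_X=\norm{i^\ast\Psi^\ast(t)(v_n^\ast-v^\ast)}_X\le\norm{i^\ast}\,\norm{\Psi(t)}\,\norm{v_n^\ast-v^\ast}_{V^\ast}\longrightarrow 0,
\]
so $L_\Psi v_n^\ast\to L_\Psi v^\ast$ pointwise on $[0,T]$. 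On the other hand, by the embedding from the previous step $L_\Psi v_n^\ast\to g$ in $L^1([0,T];X)$, hence a subsequence converges to $g$ almost everywhere. Comparing the two limits yields $g=L_\Psi v^\ast$ almost everywhere, and thus as elements of $\M$, so the graph of $L_\Psi$ is closed and the closed graph theorem finishes the proof. The only mildly delicate point is the $L^1$-embedding of $\M$ when $H>\tfrac{1}{2}$, where the route via $L^2$ is not available; the remaining steps are routine manipulations with adjoint operators.
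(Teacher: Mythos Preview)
Your proof is correct and follows essentially the same route as the paper: closed graph theorem, combined with a continuous embedding of $\M$ into an $L^p$ space so that $\M$-convergence yields almost-everywhere convergence along a subsequence, which is then matched against the pointwise convergence coming from the continuity of $i^\ast\Psi^\ast(t)$. The only cosmetic difference is that for $H<\tfrac{1}{2}$ the paper invokes the weighted Hardy--Littlewood inequality \eqref{eq.Hardy-Littlewood-weighted} to embed $\mathcal M$ into $L^{1/H}$, whereas you go through Proposition~\ref{prop.eqMandH} and the embedding $H_{T-}^{1/2-H}\hookrightarrow L^2\hookrightarrow L^1$; both achieve the same end.
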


\begin{proof}
The operator $L=L_\Psi$ is linear and takes values in $\M$  by definition of $\mathcal I$. We prove that $L$ is continuous by the closed mapping theorem. For this purpose, let $v_n^*\to v_0^*$ in $V^*$ and $Lv_n^*\to g\in \widehat{\mathcal M}$. We consider the cases $H<1/2$ and $H>1/2$ separately.

\emph{Case $H<1/2$.} From the Hardy-Littlewood inequality in weighted spaces, see \eqref{eq.Hardy-Littlewood-weighted}, it follows that
\begin{align*}
  \norm{f}_{L^q}\le c (b_H \Gamma(H+\tfrac{1}{2}))^{-1/2}\norm{f}_{\mathcal M}\qquad
  \text{for all }f\in {\mathcal M}
\end{align*}
for a constant $c>0$ and $q=\tfrac{1}{H}$.
Consequently, the convergence $Lv_n^\ast \to g$ in $\mathcal M$ implies that there exists  a subsequence $(n_k)_{k\in\N}\subseteq \N$ such
that $Lv_{n_k}^*(t)\to g(t)$ as $k\to\infty$  for Lebesgue almost all $t\in [0,T] $. On the other hand, we have $i^* \Psi^*(t)v_{n_k}^*\to i^* \Psi^*(t)v_0^* $ in $X$  as $k\to\infty$ for all $t\in [0,T]$, because $i^*$ and $\Psi^*(t)$ are continuous. Consequently, we arrive at
$g(t)= i^* \Psi^*(t)v_0^*$ for Lebesgue almost all $t\in [0,T]$,
and thus, $g=Lv_0^\ast$ as functions in $L^2([0,T];X)$.

\emph{Case $H>1/2$.} In this case $\widehat{\mathcal M}= | \mathcal M |$. Let us remark, that if $f\in | \mathcal M 	 |$ then $f\in L^1([0,T];X)$ and
\begin{align*}
(2T)^{2H-2} \|f \|^2_{L^1} &=  (2T)^{2H-2} \int_0^T \int_0^T \|f(s)\|  \|f(t)\| \,\ud s \, \ud t \\
&\le  \int_0^T \int_0^T \|f(s)\|  \|f(t)\| |s-t|^{2H-2} \,\ud s\, \ud t \\
&=\frac{1}{H(2H-1)}\|f\|_{|\mathcal M|}^2.
\end{align*}
Using this fact, the convergence $Lv_n^*\to g$ in $|\mathcal M|$ implies that $Lv_{n_k}^\ast (t)\to g(t)$ as $k\to\infty$
for Lebesgue almost all $t\in [0,T]$  for a subsequence $(n_k)_{k\in\N}\subseteq \N$. The continuity of the mapping $v^\ast\mapsto i^\ast\Psi^\ast(t)v^\ast$ for all $t\in [0,T]$
shows that $g(t)=i^\ast\Psi^\ast(t)v^\ast_0$ for Lebesgue
almost all $t\in [0,T]$ and thus, $g=Lv_0^\ast$ in $|\mathcal M|$.
\end{proof}

Before we establish the existence of the cylindrical integral as motivated in \eqref{eq.motivcylint}, we introduce an operator which will turn out to be the factorisation of the covariance operator of the cylindrical integral.
\begin{lemma}\label{le.cov}
  For every $\Psi\in \mathcal I$ we define
  \begin{align*}
 \scapro{\Gamma_{\Psi}f}{v^*}= \int_0^T [K^*( i^* \Psi^*(\cdot)v^*)(t), f(t)]\,\ud t\quad\text{for all }f\in L^2([0,T];X),\, v^\ast\in V^\ast.
\end{align*}
In this way, one obtains a linear, bounded operator $\Gamma_{\Psi}:L^2([0,T];X)\to V^{**}$.
\end{lemma}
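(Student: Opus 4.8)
The plan is to fix $f\in L^2([0,T];X)$, check that the prescription defines a bounded linear functional on $V^\ast$ (hence an element of $V^{\ast\ast}$, which is exactly what $\Gamma_\Psi f$ should be), and then verify that the assignment $f\mapsto\Gamma_\Psi f$ is itself linear and bounded. The only ingredients needed are Proposition \ref{prop continuity of T:V* to M} (the operator $L_\Psi:V^\ast\to\M$, $L_\Psi v^\ast=i^\ast\Psi^\ast(\cdot)v^\ast$, is linear and continuous) and the estimate \eqref{eq.KisoMmod} ($\norm{K^\ast g}_{L^2}\le c\norm{g}_{\M}$ for all $g\in\M$).

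First I would check that the integral defining $\scapro{\Gamma_\Psi f}{v^\ast}$ makes sense. Since $\Psi\in\mathcal I$, we have $i^\ast\Psi^\ast(\cdot)v^\ast=L_\Psi v^\ast\in\M$, so by \eqref{eq.KisoMmod} the function $K^\ast(L_\Psi v^\ast)$ is a genuine element of $L^2([0,T];X)$, in particular strongly measurable; combined with the measurability of $f$ this makes $t\mapsto[K^\ast(L_\Psi v^\ast)(t),f(t)]$ measurable, and by the Cauchy--Schwarz inequality in $L^2([0,T];X)$,
\[
\int_0^T\bigl|[K^\ast(L_\Psi v^\ast)(t),f(t)]\bigr|\,\ud t\le \bigl\|K^\ast(L_\Psi v^\ast)\bigr\|_{L^2}\,\|f\|_{L^2}<\infty .
\]
This shows $\scapro{\Gamma_\Psi f}{v^\ast}$ is well defined (and independent of the chosen representatives).

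Next, linearity of $v^\ast\mapsto\scapro{\Gamma_\Psi f}{v^\ast}$ follows because $L_\Psi$ and $K^\ast$ are linear, the scalar product $[\cdot,\cdot]$ is linear in its first slot, and integration is linear. For continuity in $v^\ast$, chaining the Cauchy--Schwarz bound above with \eqref{eq.KisoMmod} and the boundedness of $L_\Psi$ gives
\[
\bigl|\scapro{\Gamma_\Psi f}{v^\ast}\bigr|\le \bigl\|K^\ast(L_\Psi v^\ast)\bigr\|_{L^2}\,\|f\|_{L^2}\le c\,\|L_\Psi v^\ast\|_{\M}\,\|f\|_{L^2}\le c\,\|L_\Psi\|\,\|f\|_{L^2}\,\|v^\ast\|_{V^\ast},
\]
so $v^\ast\mapsto\scapro{\Gamma_\Psi f}{v^\ast}$ is a bounded linear functional on $V^\ast$, i.e.\ $\Gamma_\Psi f\in V^{\ast\ast}$.

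Finally, linearity of $f\mapsto\Gamma_\Psi f$ is immediate from linearity of $[\cdot,\cdot]$ in its second slot and of the integral, and the same chain of inequalities yields
\[
\|\Gamma_\Psi f\|_{V^{\ast\ast}}=\sup_{\|v^\ast\|_{V^\ast}\le 1}\bigl|\scapro{\Gamma_\Psi f}{v^\ast}\bigr|\le c\,\|L_\Psi\|\,\|f\|_{L^2},
\]
so $\Gamma_\Psi:L^2([0,T];X)\to V^{\ast\ast}$ is bounded. I do not expect a genuine obstacle in this argument; the one point deserving a word of care is the measurability and integrability of the scalar-valued integrand, which is resolved precisely by invoking \eqref{eq.KisoMmod} to know that $K^\ast(L_\Psi v^\ast)$ is an honest $L^2$-function rather than merely an element of the abstract space $\M$.
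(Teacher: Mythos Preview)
Your argument is correct and follows essentially the same route as the paper: apply Cauchy--Schwarz in $L^2([0,T];X)$, then the bound \eqref{eq.KisoMmod} for $K^\ast$, and finally the continuity of $L_\Psi$ from Proposition~\ref{prop continuity of T:V* to M}. The paper's proof is more terse, but the chain of inequalities is identical; your additional remarks on well-definedness and linearity are justified routine checks.
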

\begin{proof}
Proposition \ref{prop continuity of T:V* to M}, together with equation \eqref{eq.KisoMmod}, implies
\begin{align*}
\abs{ \scapro{\Gamma_{\Psi}f}{v^*}}
&= \abs{\scapro{K^\ast(i^\ast\Psi^\ast(\cdot)v^\ast)}{f}_{L^2}  } \\
& \le  c_1\norm{  i^* \Psi^*(\cdot)v^\ast}_{\M}  \norm{f}_{L^2}
 \le c_2 \norm{  v^\ast}_{V^*}  \norm{f}_{L^2} ,
\end{align*}
for some constants $c_1,\,c_2>0$, which shows boundedness of $\Gamma_{\Psi}$.
\end{proof}

\begin{prop}\label{prop integral well def and isometry}
Let the fBm $B$ be represented in the form  \eqref{equation represent fBm}.
Then for each $\Psi\in \mathcal I$ the mapping
\begin{equation}\label{equation def cylindrical integral}
Z_\Psi:V^\ast\to L^2_P(\Omega;\R),\qquad
Z_\Psi v^*:=\sum_{k=1}^\infty \int_0^T\langle \Psi(t)ie_k, v^*\rangle\, \ud b_k(t)
\end{equation}
defines a Gaussian cylindrical random variable in $V$ with covariance operator
$ Q_{\Psi}:V^\ast\to V^{**}$, factorised by $ Q_{\Psi}  =  \Gamma_{\Psi}\Gamma_{\Psi}^\ast$.
Furthermore, the cylindrical random variable $Z_\Psi$ is  independent of
the representation \eqref{equation represent fBm}.
\end{prop}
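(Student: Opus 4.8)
The plan is to verify the three assertions in turn: that $Z_\Psi v^\ast$ is well defined as an element of $L^2_P(\Omega;\R)$ for each $v^\ast$, that the resulting map is a Gaussian cylindrical random variable with covariance operator $Q_\Psi = \Gamma_\Psi\Gamma_\Psi^\ast$, and finally that $Z_\Psi$ does not depend on the chosen series representation \eqref{equation represent fBm}. For the first point, fix $v^\ast\in V^\ast$ and observe that by Proposition \ref{prop link operator K and K}(b), each scalar function $t\mapsto\scapro{\Psi(t)ie_k}{v^\ast}=[i^\ast\Psi^\ast(t)v^\ast,e_k]$ lies in $\M_\R$ with Wiener integral satisfying $\norm{\int_0^T\scapro{\Psi(t)ie_k}{v^\ast}\,\ud b_k(t)}_{L^2_P}^2=\norm{[L_\Psi v^\ast(\cdot),e_k]}_{\M_\R}^2$. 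By \eqref{equation isometry wiener integral Hilbert} and Proposition \ref{prop link operator K and K} this equals $\norm{[K^\ast(L_\Psi v^\ast)(\cdot),e_k]}_{L^2}^2=\norm{\scapro{K^\ast_X(L_\Psi v^\ast)(\cdot)}{e_k}}_{L^2([0,T];\R)}^2$, where $K^\ast_X(L_\Psi v^\ast)\in L^2([0,T];X)$ by \eqref{eq.KisoMmod}. Summing over $k$ and using Parseval, the $L^2_P$-norms of the partial sums form a Cauchy sequence with limit $\norm{K^\ast_X(L_\Psi v^\ast)}_{L^2([0,T];X)}^2$; since the summands are independent centred Gaussians, the series \eqref{equation def cylindrical integral} converges in $L^2_P(\Omega;\R)$, and $Z_\Psi v^\ast$ is a centred Gaussian variable with $\E[(Z_\Psi v^\ast)^2]=\norm{K^\ast_X(L_\Psi v^\ast)}_{L^2}^2$.

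For the second point I would first check linearity and continuity of $v^\ast\mapsto Z_\Psi v^\ast$. Linearity is immediate from linearity of the pairing and of the Wiener integral. Continuity into $L^0_P(\Omega;\R)$ follows because the $L^2_P$-norm is controlled, via the identity just derived together with \eqref{eq.KisoMmod} and Proposition \ref{prop continuity of T:V* to M}, by $c\norm{v^\ast}_{V^\ast}$; convergence in $L^2_P$ implies convergence in probability. Since each $Z_\Psi v^\ast$ is Gaussian, $Z_\Psi$ is a Gaussian cylindrical random variable. To identify the covariance operator, compute $\scapro{Q_\Psi u^\ast}{v^\ast}=\E[(Z_\Psi u^\ast)(Z_\Psi v^\ast)]$ by expanding both series, using independence of the $b_k$ (so cross terms with $k\neq\ell$ vanish) and the polarised isometry $\E[(\int f\,\ud b_k)(\int g\,\ud b_k)]=\scapro{K^\ast_\R f}{K^\ast_\R g}_{L^2}$ from \eqref{eq.KastScaPro} applied coordinatewise via Proposition \ref{prop link operator K and K}. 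This yields $\scapro{Q_\Psi u^\ast}{v^\ast}=\sum_k\scapro{\scapro{K^\ast_X(L_\Psi u^\ast)}{e_k}}{\scapro{K^\ast_X(L_\Psi v^\ast)}{e_k}}_{L^2}=\scapro{K^\ast_X(L_\Psi u^\ast)}{K^\ast_X(L_\Psi v^\ast)}_{L^2([0,T];X)}$. On the other hand, from Lemma \ref{le.cov}, $\scapro{\Gamma_\Psi f}{v^\ast}=\scapro{K^\ast_X(L_\Psi v^\ast)}{f}_{L^2}$, so $\Gamma_\Psi^\ast v^\ast=K^\ast_X(L_\Psi v^\ast)$ as an element of $L^2([0,T];X)$ (identified with its own dual), whence $\scapro{\Gamma_\Psi\Gamma_\Psi^\ast u^\ast}{v^\ast}=\scapro{\Gamma_\Psi^\ast u^\ast}{\Gamma_\Psi^\ast v^\ast}_{L^2}$ equals the same quantity. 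Hence $Q_\Psi=\Gamma_\Psi\Gamma_\Psi^\ast$.

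For the independence-of-representation claim, suppose $B$ also admits a representation $B(t)u^\ast=\sum_{k}\scapro{\tilde i\tilde e_k}{u^\ast}\tilde b_k(t)$ with a possibly different Hilbert space $\tilde X$, basis $(\tilde e_k)$, operator $\tilde i$ and independent standard fBms $(\tilde b_k)$. I would argue that the corresponding $\tilde Z_\Psi v^\ast$ has the same distribution as $Z_\Psi v^\ast$ — indeed the same covariance structure jointly over finitely many $v^\ast$ — by showing $\tilde Q_\Psi=Q_\Psi$; this follows because both equal $K^\ast_X(L_\Psi\,\cdot)$-type expressions built only from the operator $Q=ii^\ast=\tilde i\tilde i^\ast$, which is intrinsic to $B$ by Theorem \ref{theorem cylindrical fBm as series}. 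More carefully, $\scapro{Q_\Psi u^\ast}{v^\ast}$ can be rewritten, unwinding the definition of $K^\ast$, purely in terms of $\scapro{\Psi(t)Q\Psi^\ast(s)u^\ast}{v^\ast}$ and the kernel $\kappa$, with no reference to the factorisation; since the same holds for $\tilde Q_\Psi$ and $\tilde i\tilde i^\ast=Q$, the two covariance operators coincide, and being centred Gaussian with equal covariances the cylindrical random variables $Z_\Psi$ and $\tilde Z_\Psi$ have identical cylindrical distributions. I expect the main obstacle to be the bookkeeping in this last step: making the rewriting of $\scapro{Q_\Psi u^\ast}{v^\ast}$ in terms of $Q$ alone fully rigorous, in particular justifying the interchange of the $k$-summation with the (Bochner) integrals defining $K^\ast$ on $X$-valued functions, which is where Proposition \ref{prop link operator K and K} together with the boundedness in Lemma \ref{le.cov} does the essential work.
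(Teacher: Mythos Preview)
Your treatment of the first two assertions is correct and essentially coincides with the paper's argument; the only cosmetic difference is that the paper identifies the covariance via the characteristic function $\phi_{Z_\Psi}(v^\ast)=\exp\big(-\tfrac12\norm{\Gamma_\Psi^\ast v^\ast}_{L^2}^2\big)$ rather than by the direct bilinear expansion you give, but the computations are equivalent.

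The independence-of-representation step, however, has a genuine gap. The assertion is that $Z_\Psi$ is well defined as a cylindrical \emph{random variable}: if the same $B$ admits a second representation $B(t)u^\ast=\sum_k\scapro{\tilde i\tilde e_k}{u^\ast}\tilde b_k(t)$ on the same probability space, then $Z_\Psi v^\ast=\tilde Z_\Psi v^\ast$ in $L^2_P(\Omega;\R)$, i.e.\ $P$-a.s., for every $v^\ast$. Your argument only shows that $Z_\Psi$ and $\tilde Z_\Psi$ have the same cylindrical \emph{distribution}, which is strictly weaker and would not suffice for the later use of $Z_\Psi$ (for instance, in the definition of stochastic integrability one requires $Z_\Psi v^\ast=\scapro{I_\Psi}{v^\ast}$ as random variables, not merely in law). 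The missing ingredient is the cross term: since $(b_k)$ and $(\tilde b_\ell)$ are all constructed from the \emph{same} $B$, they are correlated, with $E[b_k(s)\tilde b_\ell(t)]$ computable from the covariance $Q$ of $B$; this must be used to show $E[(Z_\Psi v^\ast)(\tilde Z_\Psi v^\ast)]=\norm{\Gamma_\Psi^\ast v^\ast}_{L^2}^2$, whence $E\abs{Z_\Psi v^\ast-\tilde Z_\Psi v^\ast}^2=0$. The paper delegates this computation to \cite[Le.~2]{riedle11}. Your attempt to rewrite $Q_\Psi$ purely in terms of $Q=ii^\ast$ is aimed at the wrong target: it can at best recover equality of the two covariance operators, which you already have, but cannot by itself upgrade equality in law to almost-sure equality.
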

\begin{proof}
Since  $\langle \Psi(\cdot)ie_k, v^*\rangle =[e_k,i^*\Psi^*(\cdot)v^* ]$ and $i^*\Psi^*(\cdot)v^* \in \widehat{ \mathcal M}$ for every $v^\ast\in V^\ast$, Proposition \ref{prop link operator K and K} guarantees that the one-dimensional integrals in \eqref{equation def cylindrical integral}  are well defined, and it implies that
\begin{align*}
\|Z_\Psi v^*\|_{L^2_P}^2
&=  \sum_{k=1}^\infty  E \left\vert \int_0^T  \langle \Psi(t)ie_k, v^*\rangle \ud b_k(t) \right\vert^2  \\
&=  \sum_{k=1}^\infty  \int_0^T \left\vert K_{\R}^* \big( \langle \Psi(\cdot)ie_k, v^*\rangle\big) (t) \right\vert^2 \ud t \\
&=  \sum_{k=1}^\infty  \int_0^T \left\vert  K_{\R}^* \big( [ e_k, i^*\Psi^*(\cdot)v^*]\big) (t) \right\vert^2 \ud t\\
&=  \sum_{k=1}^\infty  \int_0^T \left\vert [ e_k, K_X^*\big( i^*\Psi^*(\cdot)v^*\big) (t)] \right\vert^2 \ud t \\
&=  \sum_{k=1}^\infty  \int_0^T  \left[e_k ,\big(\Gamma_{\Psi}^\ast v^\ast \big)(t)\right]^2 \, \ud t \\
&=\|\Gamma_{\Psi}^\ast v^*\|^2_{  L^2}.
\end{align*}
Consequently, the sum in \eqref{equation def cylindrical integral} converges in $L^2_P(\Omega;\R)$ and the limit is a zero mean Gaussian random variable. The continuity of the operator $\Gamma_{\Psi}^\ast: V^\ast \to L^2([0,T];X)$ implies  the continuity
of $Z_\Psi:V^\ast\to L^2_P(\Omega;\R)$. It follows for the characteristic function of $Z_\Psi$ that
\begin{align*}
  \phi_{Z_\Psi}(v^\ast)=\exp\left(-\tfrac{1}{2}\norm{\Gamma_\Psi^\ast v^\ast}^2_{L^2}\right)
 \qquad\text{for all }v^\ast\in V^\ast.
\end{align*}
Since Lemma \ref{le.cov} implies that
\begin{align*}
  \norm{\Gamma_\Psi^\ast v^\ast}_{L^2}^2=
  \scapro{\Gamma_\Psi \Gamma_\Psi^\ast v^\ast}{v^\ast}
 \qquad\text{for all }v^\ast\in V^\ast ,
\end{align*}
it follows that the covariance operator $Q_\Psi$ of $Z_\Psi$ obeys
 $Q_\Psi=\Gamma_\Psi \Gamma_\Psi^\ast$.


The independence of $Z_\Psi$ of the representation \eqref{equation represent fBm}  can be established as in \cite[Le.2]{riedle11}.
\end{proof}

For $\Psi\in {\mathcal I}$ we call the cylindrical random variable $Z_\Psi$, defined in \eqref{equation def cylindrical integral}, the {\em cylindrical integral of $\Psi$}.
Apart from the restriction of the space $\mathcal M$ of all integrable distributions to $\M$, the condition for  a mapping $\Psi$ to be in $\mathcal I$  is the minimal requirement to guarantee  that the real valued integrals in \eqref{eq.motivcylint} exist. Thus without any further condition  the cylindrical integral $Z_\Psi$ exists in the Banach space $U$. However, in order to obtain
that the cylindrical integral $Z_\Psi$  extends to a genuine random variable in $U$,
the integrand must exhibit further properties.

\begin{defin}
A function $\Psi\in {\mathcal I}$ is called {\em stochastically integrable}
if there exists a random variable $I_\Psi:\Omega\to V$ such that
\begin{align*}
  Z_\Psi v^\ast=\scapro{I_\Psi}{v^\ast}
  \qquad\text{for all }v^\ast\in V^\ast,
\end{align*}
where $Z_\Psi$ denotes the cylindrical integral of $\Psi$. We use the notation
\begin{align*}
  I_\Psi:=\int_0^T \Psi(t)\,\ud B(t).
\end{align*}
\end{defin}

In other words, a function $\Psi\in {\mathcal I}$ is stochastically integrable if and only if
the cylindrical random variable $Z_\Psi$ is induced by a Radon random variable. This occurs if and only if the cylindrical distribution of $Z_\Psi$ extends to a Radon measure. In Sazonov spaces this is equivalent to the condition that the characteristic function of $Z_\Psi$ is Sazonov continuous. However, since the cylindrical distribution of $Z_\Psi$ is Gaussian, one can equivalently express the stochastic integrability in terms of the covariance operator.
\begin{theo}\label{theo I_t honest process}
For $\Psi\in \mathcal I$ the following are equivalent:
\begin{itemize}
  \item[(a)] $\Psi$ is stochastically integrable;
  \item[(b)]  the operator $\Gamma_{\Psi}$  is $V$-valued and  $\gamma$-radonifying.
\end{itemize}
\end{theo}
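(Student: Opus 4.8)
The plan is to reduce the claim to the covariance factorisation $Q_\Psi=\Gamma_\Psi\Gamma_\Psi^\ast$ obtained in Proposition~\ref{prop integral well def and isometry}, together with the classical correspondence between $\gamma$-radonifying operators and Radonification of Gaussian cylindrical measures, following the pattern of \cite[Th.25]{riedle11} and \cite{JanLutz2005}. Set $\mathcal H:=L^2([0,T];X)$ and let $\gamma$ be the canonical Gaussian cylindrical measure on $\mathcal H$, that is the one with characteristic function $h\mapsto\exp(-\tfrac{1}{2}\norm{h}_{L^2}^2)$. By Proposition~\ref{prop integral well def and isometry}, $Z_\Psi$ is a centred Gaussian cylindrical random variable with $\phi_{Z_\Psi}(v^\ast)=\exp(-\tfrac{1}{2}\norm{\Gamma_\Psi^\ast v^\ast}_{L^2}^2)$, so its cylindrical distribution $\mu_\Psi$ is exactly the image cylindrical measure $\Gamma_\Psi\gamma$ on $\mathcal Z(V)$. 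By the discussion preceding the theorem, $\Psi$ is stochastically integrable precisely when $\mu_\Psi$ extends to a Radon measure on $\mathcal B(V)$, and thus the theorem reduces to the statement that $\Gamma_\Psi\gamma$ is Radon on $V$ if and only if $\Gamma_\Psi$ takes values in $V$ and the resulting operator in $\mathcal L(\mathcal H,V)$ is $\gamma$-radonifying.

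For the implication (b)$\Rightarrow$(a), if $\Gamma_\Psi\in\mathcal L(\mathcal H,V)$ is $\gamma$-radonifying then $\mu_\Psi=\Gamma_\Psi\gamma$ extends to a Radon Gaussian measure on $V$ by the very definition of that property, so $\Psi$ is stochastically integrable. To exhibit the integral $I_\Psi$ explicitly on $\Omega$ with $\scapro{I_\Psi}{v^\ast}=Z_\Psi v^\ast$ almost surely, one uses the isonormal Gaussian process over $\mathcal H$ carried by $\Omega$: writing $b_k(t)=\int_0^t\kappa(t,u)\,\ud W_k(u)$ for independent Brownian motions $W_k$ --- so that $\int_0^T f\,\ud b_k=\int_0^T(K^\ast f)\,\ud W_k$ as in the real valued theory \cite{biagini_et.al.08} --- the map $W\colon\mathcal H\to L^2_P(\Omega;\R)$, $W(g)=\sum_k\int_0^T[e_k,g(t)]\,\ud W_k(t)$, is a linear isometry, and the chain of identities in the proof of Proposition~\ref{prop integral well def and isometry} rewrites $Z_\Psi v^\ast$ as $W(\Gamma_\Psi^\ast v^\ast)$. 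For any orthonormal basis $(g_n)_{n\in\N}$ of $\mathcal H$, the series $\sum_n W(g_n)\,\Gamma_\Psi g_n$ converges in $L^2_P(\Omega;V)$ since $\Gamma_\Psi$ is $\gamma$-radonifying and the $W(g_n)$ are independent standard Gaussian variables, and it converges almost surely by the It\^o--Nisio theorem; its limit $I_\Psi$ satisfies $\scapro{I_\Psi}{v^\ast}=\sum_n W(g_n)\scapro{\Gamma_\Psi g_n}{v^\ast}=\sum_n W(g_n)\scapro{g_n}{\Gamma_\Psi^\ast v^\ast}_{L^2}=W(\Gamma_\Psi^\ast v^\ast)=Z_\Psi v^\ast$ in $L^2_P(\Omega;\R)$.

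For the implication (a)$\Rightarrow$(b), let $I_\Psi$ be the stochastic integral. Since $\scapro{I_\Psi}{v^\ast}=Z_\Psi v^\ast$ is a centred real Gaussian variable for each $v^\ast\in V^\ast$, $I_\Psi$ is a centred Gaussian $V$-valued random variable; by Fernique's theorem it has strong second moments, so $P_{I_\Psi}$ is a centred Radon Gaussian measure on $V$ with a genuinely $V$-valued covariance operator $\bar Q$, and computing second moments gives $\bar Q=Q_\Psi=\Gamma_\Psi\Gamma_\Psi^\ast$. Let $j\colon H_\mu\hookrightarrow V$ be the continuous, injective inclusion of the reproducing kernel Hilbert space of $P_{I_\Psi}$, so that $j$ is $\gamma$-radonifying and $jj^\ast=\Gamma_\Psi\Gamma_\Psi^\ast$ as operators $V^\ast\to V^{\ast\ast}$; in particular $\norm{j^\ast v^\ast}_{H_\mu}=\norm{\Gamma_\Psi^\ast v^\ast}_{L^2}$ for all $v^\ast\in V^\ast$. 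Because $j$ is injective, $\operatorname{ran}j^\ast$ is dense in $H_\mu$, and this norm identity yields an isometry $\theta\colon H_\mu\to\mathcal H$ determined by $\theta(j^\ast v^\ast)=\Gamma_\Psi^\ast v^\ast$, with $\operatorname{ran}\theta=\overline{\operatorname{ran}\Gamma_\Psi^\ast}$ and $\Gamma_\Psi\theta=j$. Since $\mathcal H=\ker\Gamma_\Psi\oplus\overline{\operatorname{ran}\Gamma_\Psi^\ast}$ and $\Gamma_\Psi$ vanishes on $\ker\Gamma_\Psi=(\overline{\operatorname{ran}\Gamma_\Psi^\ast})^\perp$, it follows that $\Gamma_\Psi=j\circ\theta^{-1}\circ P$, where $P\colon\mathcal H\to\overline{\operatorname{ran}\Gamma_\Psi^\ast}$ denotes the orthogonal projection and $\theta^{-1}$ the inverse isometry. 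Consequently $\Gamma_\Psi$ is $V$-valued, and being the composition of the $\gamma$-radonifying operator $j$ with the bounded operator $\theta^{-1}P$, it is $\gamma$-radonifying.

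I expect the substance to lie in the direction (a)$\Rightarrow$(b): since $\Gamma_\Psi$ is a priori only $V^{\ast\ast}$-valued, one cannot argue directly but must route through the Radon Gaussian measure $P_{I_\Psi}$, using that both its covariance operator (by Fernique) and its reproducing kernel Hilbert space lie inside $V$, together with the operator-ideal property of $\gamma$-radonifying operators, in order to pull $\Gamma_\Psi$ back into $\mathcal L(\mathcal H,V)$. In the opposite direction the only non-formal point is the upgrade from ``the cylindrical distribution of $Z_\Psi$ is Radon'' to the pointwise identity $\scapro{I_\Psi}{v^\ast}=Z_\Psi v^\ast$ on the given $\Omega$, where the It\^o--Nisio theorem enters; the remaining steps are routine precisely because the integrand is deterministic, so that the whole problem is governed by the single factorisation $Q_\Psi=\Gamma_\Psi\Gamma_\Psi^\ast$.
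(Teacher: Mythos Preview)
Your proof is correct and follows essentially the same route as the paper: both directions hinge on the factorisation $Q_\Psi=\Gamma_\Psi\Gamma_\Psi^\ast$ from Proposition~\ref{prop integral well def and isometry}, the identification of the cylindrical law of $Z_\Psi$ as $\Gamma_\Psi\gamma$, and, for (a)$\Rightarrow$(b), the orthogonal decomposition $\mathcal H=\overline{\operatorname{ran}\Gamma_\Psi^\ast}\oplus\ker\Gamma_\Psi$ combined with the observation that $\Gamma_\Psi\Gamma_\Psi^\ast v^\ast=Qv^\ast\in V$. The only differences are cosmetic: for (b)$\Rightarrow$(a) the paper cites \cite[Thm.~IV.2.5]{vakhaniya_et.al.87} instead of your explicit It\^o--Nisio construction, and for (a)$\Rightarrow$(b) your RKHS isometry $\theta$ spells out what the paper compresses into the sentence ``since $Q$ is a Gaussian covariance operator, the operator $\Gamma_\Psi$ is $\gamma$-radonifying.''
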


\begin{proof}
(b) $\Rightarrow$ (a).
Let $\gamma$ be the canonical Gaussian cylindrical measure on $L^2([0,T];X)$.
It follows from Proposition \ref{prop integral well def and isometry}
that the cylindrical distribution of $Z_\Psi$ is the image cylindrical measure $\gamma\circ \Gamma_\Psi^{-1}$. According to \cite[Thm.IV.2.5, p.216]{vakhaniya_et.al.87}, the cylindrical random variable $Z_\Psi$ is induced by a $V$-valued random variable if and only if its cylindrical distribution $\gamma\circ \Gamma_\Psi^{-1}$ extends to a Radon measure on ${\mathcal B}(V)$, which is guaranteed
by (b).

(a) $\Rightarrow$ (b). The proof follows closely some arguments in the proof of Theorem 2.3 in \cite{JanLutz2005}.
Let $Q:V^\ast\to V$ be the covariance operator of the Gaussian random variable $\int \Psi(t)\,\ud B(t)$. Proposition  \ref{prop integral well def and isometry} implies that $Q=\Gamma_\Psi \Gamma_\Psi^\ast:V^\ast\to V$.
Define the set $S:=\{K^\ast(i^\ast\Psi^\ast(\cdot)v^\ast):\,v^\ast\in V^\ast\}$, which is a subset of $L^2([0,T];X)$. By the very definition of $\Gamma_{\Psi}$, a function $f\in L^2([0,T];X)$ is in ker$\,\Gamma_{\Psi}$
if and only if $f\perp S$, which yields
\begin{align}\label{eq.decompL^2}
  L^2([0,T];X)=\bar{S}\oplus \text{\rm ker}\,\Gamma_{\Psi}.
\end{align}
Since for all $v^\ast, w^\ast\in V^\ast$ we have
\begin{align*}
  \scapro{\Gamma_{\Psi}K^\ast(i^\ast\Psi^\ast(\cdot)v^\ast)}{w^\ast}
  =\scapro{\Gamma_{\Psi}\Gamma_{\Psi}^\ast v^\ast}{w^\ast}
  =\scapro{Q v^\ast}{w^\ast},
\end{align*}
it follows that $\Gamma_{\Psi}K^\ast(i^\ast\Psi^\ast(\cdot)v^\ast)=Qv^\ast$ for
all $v^\ast\in V^\ast$. Consequently,
$\Gamma_{\Psi}f\in V$ for all $f\in S$ and the decomposition \eqref{eq.decompL^2} implies
that $\Gamma_{\Psi}f\in V$ for all $f\in L^2([0,T];X)$. Clearly, since $Q$ is a Gaussian covariance operator, the operator $\Gamma_{\Psi}$ is $\gamma$-radonifying, which completes the proof.
 \end{proof}

\begin{coroll}\label{cor.domintegrable}
If $\Phi$ and $\Psi$ are mappings in $\mathcal I$ satisfying
\begin{align*}
  \norm{i^\ast \Phi^\ast(\cdot)v^\ast}_{\mathcal M}\le c
   \norm{i^\ast \Psi^\ast(\cdot)v^\ast}_{\mathcal M}\qquad\text{for all }
     v^\ast\in V^\ast,
\end{align*}
for a constant $c>0$
and if $\Psi$ is stochastically integrable then $\Phi$ is also
stochastically integrable.
\end{coroll}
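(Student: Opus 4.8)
The plan is to reduce everything to the characterisation of stochastic integrability in Theorem~\ref{theo I_t honest process}: it suffices to prove that $\Gamma_\Phi$ is $V$-valued and $\gamma$-radonifying. First I reformulate the domination hypothesis at the level of the operators $\Gamma_\Phi^\ast,\Gamma_\Psi^\ast\colon V^\ast\to L^2([0,T];X)$. By Lemma~\ref{le.cov} --- equivalently, by the computation carried out in the proof of Proposition~\ref{prop integral well def and isometry} --- one has $\Gamma_\Psi^\ast v^\ast=K^\ast(i^\ast\Psi^\ast(\cdot)v^\ast)$, and since $K^\ast$ acts isometrically from $\mathcal M$ onto $L^2([0,T];X)$ while $i^\ast\Psi^\ast(\cdot)v^\ast\in\M\subseteq\mathcal M$, it follows that $\norm{\Gamma_\Psi^\ast v^\ast}_{L^2}=\norm{i^\ast\Psi^\ast(\cdot)v^\ast}_{\mathcal M}$, and analogously for $\Phi$. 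Hence the hypothesis of the corollary is equivalent to
\[
  \norm{\Gamma_\Phi^\ast v^\ast}_{L^2}\le c\,\norm{\Gamma_\Psi^\ast v^\ast}_{L^2}\qquad\text{for all }v^\ast\in V^\ast .
\]

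Next I build an intertwining operator on $L^2([0,T];X)$. Setting $D(\Gamma_\Psi^\ast v^\ast):=\Gamma_\Phi^\ast v^\ast$, the inequality above shows that $D$ is a well-defined bounded linear map on $\operatorname{ran}\Gamma_\Psi^\ast$ (if $\Gamma_\Psi^\ast v^\ast=\Gamma_\Psi^\ast w^\ast$, then $\norm{\Gamma_\Phi^\ast(v^\ast-w^\ast)}_{L^2}\le c\norm{\Gamma_\Psi^\ast(v^\ast-w^\ast)}_{L^2}=0$), with $\norm{D}\le c$. Extending $D$ by continuity to $\overline{\operatorname{ran}\Gamma_\Psi^\ast}$ and by zero on the orthogonal complement --- which by \eqref{eq.decompL^2} equals $\ker\Gamma_\Psi$ --- I obtain $D\in\L(L^2([0,T];X),L^2([0,T];X))$ with $\Gamma_\Phi^\ast=D\Gamma_\Psi^\ast$. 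Taking adjoints yields, for every $f\in L^2([0,T];X)$ and $v^\ast\in V^\ast$,
\[
  \scapro{\Gamma_\Phi f}{v^\ast}=\scapro{f}{D\Gamma_\Psi^\ast v^\ast}_{L^2}=\scapro{D^\ast f}{\Gamma_\Psi^\ast v^\ast}_{L^2}=\scapro{\Gamma_\Psi(D^\ast f)}{v^\ast}.
\]
Since $\Psi$ is stochastically integrable, Theorem~\ref{theo I_t honest process} guarantees that $\Gamma_\Psi$ is $V$-valued, hence $\Gamma_\Psi(D^\ast f)\in V$, and the displayed identity then forces $\Gamma_\Phi f=\Gamma_\Psi(D^\ast f)$ in $V$. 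Therefore $\Gamma_\Phi=\Gamma_\Psi D^\ast$ is $V$-valued; being the composition of the $\gamma$-radonifying operator $\Gamma_\Psi$ with the bounded operator $D^\ast$, it is $\gamma$-radonifying by the ideal property of $\gamma$-radonifying operators. Theorem~\ref{theo I_t honest process}, applied in the direction (b)$\Rightarrow$(a), then shows that $\Phi$ is stochastically integrable.

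The only subtle point is the bookkeeping of biduals: Lemma~\ref{le.cov} a priori produces $\Gamma_\Phi$ only as a map into $V^{\ast\ast}$, so one must argue --- as above, exploiting that $\Gamma_\Psi$ is already known to be $V$-valued --- that the identity $\Gamma_\Phi f=\Gamma_\Psi(D^\ast f)$ genuinely holds in $V$. Apart from this, the proof is routine Hilbert space algebra combined with the cited results and introduces no new estimates.
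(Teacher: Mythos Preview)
Your proof is correct and takes a genuinely different, more elementary route than the paper's. Both proofs start from the same reformulation $\norm{\Gamma_\Phi^\ast v^\ast}_{L^2}\le c\,\norm{\Gamma_\Psi^\ast v^\ast}_{L^2}$, but then diverge: the paper passes to the covariance operators $Q_\Phi=\Gamma_\Phi\Gamma_\Phi^\ast$ and $Q_\Psi=\Gamma_\Psi\Gamma_\Psi^\ast$, observes $\scapro{Q_\Phi v^\ast}{v^\ast}\le c^2\scapro{Q_\Psi v^\ast}{v^\ast}$, and invokes a black-box domination result for Gaussian covariance operators (cited from \cite[Sec.~1.1]{JanLutz2005}) to conclude that $Q_\Phi$ is $V$-valued and Gaussian; it then finishes by repeating the (a)$\Rightarrow$(b) argument of Theorem~\ref{theo I_t honest process}. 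Your argument instead works directly at the level of the square roots: you construct the intertwining operator $D$ via what is essentially Douglas' range-inclusion lemma, obtain the explicit factorisation $\Gamma_\Phi=\Gamma_\Psi D^\ast$, and conclude immediately from the ideal property of $\gamma$-radonifying operators. Your approach is more self-contained --- it avoids the external citation and the detour through the proof of Theorem~\ref{theo I_t honest process} --- while the paper's approach is more modular and perhaps better highlights the connection to domination of Gaussian measures. One cosmetic remark: your reference to \eqref{eq.decompL^2} for identifying $(\overline{\operatorname{ran}\Gamma_\Psi^\ast})^\perp$ with $\ker\Gamma_\Psi$ points into the middle of another proof; this identity is just the standard Hilbert space fact $(\operatorname{ran}T)^\perp=\ker T^\ast$ and need not be cited at all, and in any case you only need \emph{some} bounded extension of $D$ to all of $L^2([0,T];X)$, which orthogonal projection onto $\overline{\operatorname{ran}\Gamma_\Psi^\ast}$ provides regardless of what the complement is.
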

\begin{proof} The proof follows some arguments in the proof of Theorem 2.3 in \cite{JanLutz2005}.
Define  the operator $Q:=\Gamma_{\Phi}\Gamma_{\Phi}^\ast:V^\ast\to V^{\ast\ast}$.
The isometry \eqref{equation isometry wiener integral Hilbert} implies for every $v^\ast\in V^\ast$ that
   \begin{align*}
     \scapro{v^\ast}{Qv^\ast}^2
     = \scapro{\Gamma_{\Phi}^\ast v^\ast}{\Gamma_{\Phi}^\ast v^\ast}^2_{L^2}
     &= \norm{K^\ast\big(i^\ast \Phi^\ast(\cdot)v^\ast\big)}^2_{L^2}\\
     &= \norm{i^\ast  \Phi^\ast(\cdot)v^\ast}^2_{\mathcal M}\\
     &\le c \norm{i^\ast \Psi^\ast(\cdot)  v^\ast}^2_{\mathcal M}
     = c \scapro{\Gamma_{\Psi}\Gamma_{\Psi}^\ast v^\ast}{v^\ast}.
   \end{align*}
Since $\Gamma_{\Psi}\Gamma_{\Psi}^\ast$ and $Q$ are positive, symmetric operators in $\L(V^\ast, V^{\ast\ast})$ and the first one is $V$-valued according to Theorem \ref{theo I_t honest process}, it follows by an argument based on a result of the domination of Gaussian measures, see \cite[Sec.1.1]{JanLutz2005}, that $Q$ is also $V$-valued and a Gaussian covariance operator. As in the proof of the implication (a) $\Rightarrow$ (b) in Theorem \ref{theo I_t honest process} we can conclude that $\Gamma_\Phi$ is $V$-valued.
\end{proof}


If the mapping $\Psi\in \mathcal I$ is stochastically integrable, Proposition \ref{pro.extrefinM}
implies for each $t\in [0,T]$ that $\1_{[0,t]}\Psi\in \mathcal I$  and it satisfies
\begin{align*}
  \norm{\1_{[0,t]}i^\ast \Psi^\ast(\cdot)v^\ast}_{\mathcal M}\le
  c_t \norm{i^\ast \Psi^\ast(\cdot)v^\ast}_{\mathcal M}
   \qquad\text{for all }
     v^\ast\in V^\ast,
\end{align*}
for a constant $c_t>0$. Corollary \ref{cor.domintegrable} enables us to conclude
that $\1_{[0,t]}\Psi$ is stochastically integrable, and thus we can define
\begin{align*}
  \int_0^t \Psi(s)\,\ud B(s):=\int_0^T \1_{[0,t]}(s)\Psi(s)\,\ud B(s)
  \qquad\text{for all }t\in [0,T].
\end{align*}
The integral process $\big(\int_0^t \Psi(s)\,\ud B(s):\, t\in [0,T]\big)$ is continuous in $p$-th mean for each $p\ge 1$.
In order to see that let $t_n\to t$ as $n\to\infty$ for $t_n\ge t$ and let $Q^{(n)}_{\Psi}$ denote the covariance
operator of the Gaussian random variable $\int_t^{t_n} \Psi(s)\,\ud B(s)$. It follows for each $v^\ast\in V^\ast$
that
\begin{align*}
  \scapro{Q_{\Psi}^{(n)}v^\ast}{v^\ast}
  = \norm{K^\ast(\1_{[t,t_n]}(\cdot)i^\ast\Psi^\ast(\cdot)v^\ast)}^2_{L^2}
  = \norm{\1_{[t,t_n]}(\cdot)i^\ast\Psi^\ast(\cdot)v^\ast}^2_{\mathcal M}.
\end{align*}
Each $f\in \M$ satisfies $\norm{\1_{[t,t_n]}(\cdot)f}_{\mathcal M}\to 0$ as $t_n\to t$ which follows from
\eqref{eq.scalarMH>0.5} in case $H>\tfrac{1}{2}$ and from results in \cite[Ch.13.3]{samko_et.al.93}
in case $H<\tfrac{1}{2}$, see also Proposition \ref{pro.extrefinM}. Consequently, we obtain
that $\scapro{Q^{(n)}_{\Psi}v^\ast}{v^\ast}\to 0$ as $t_n\to t$ and we can conclude as in the proof
of Corollary 2.8 in \cite{JanLutz2005} that the integral process is continuous in $p$-th mean.

\section{The Cauchy problem}\label{sc: cauchy problem}

In this section, we apply our previous results to consider stochastic evolution equations  driven by cylindrical fractional Brownian motions of the form
\begin{align}\label{equation stoch Cauchy problem}
\begin{split}
  \ud Y(t)&= AY(t)\,\ud t +C\,\ud B(t),\quad  t\in(0,T],\\
Y(0)&=y_0.
\end{split}
\end{align}
Here $B$ is a cylindrical fBm in a separable Banach space $U$, $A$ is a generator
 of a strongly continuous semigroup $(S(t), t\ge 0)$ in a separable Banach space $V$
and $C$ is an operator in $\L(U,V)$. The initial condition $y_0$ is an element in $V$.

The paths of a solution exhibit some kind of  regularity, which is weaker than $P$-a.s. Bochner integrable paths:
\begin{defin}
  A $V$-valued stochastic process $(X(t):\, t\in [0,T])$ is called {\em weakly Bochner regular} if for every sequence $(H_n)_{n\in\N}$ of continuous functions $H_n:[0,T]\to V^\ast$ it satisfies:
  \begin{align*}
    \sup_{t\in [0,T]} \norm{H_n(t)}\to 0
    \;\Rightarrow \; \int_0^T \abs{\scapro{X(t)}{H_{n_k}(t)}}^2\, dt \to 0
    \qquad\text{$P$-a.s. for } k\to\infty,
  \end{align*}
for a subsequence $(H_{n_k})_{k\in\N}$ of $(H_n)_{n\in\N}$.
\end{defin}


\begin{defin}
A  stochastic process $(Y(t):\, t\in[0,T])$ in $V$ is called a \emph{weak solution of \eqref{equation stoch Cauchy problem}} if it is weakly Bochner regular and for every $v^\ast \in \D(A^\ast)$ and $t\in [0,T]$
we have $P$-a.s.,
\begin{equation}\label{eq Cauchy pb}
\scapro{Y(t)}{v^*}= \scapro{y_0}{v^\ast} + \int_0^t \scapro{Y(s)}{A^\ast v^*} \, \ud s+ B(t)(C^\ast v^*).
\end{equation}
\end{defin}

From a proper integration theory we can expect that if the convoluted semigroup
$S(t-\cdot)C$ is integrable for all $t\in [0,T]$ then a weak solution of \eqref{equation stoch Cauchy problem} exists and can be represented by the usual variation of constants formula.

\begin{theo}\label{th.existencesolution}
Assume that $S(\cdot)C$ is in $\mathcal I$. Then the following are equivalent:
\begin{enumerate}
 \item[{\rm (a)}] the Cauchy problem \eqref{equation stoch Cauchy problem} has a weak solution $Y$;
  \item[{\rm (b)}] the mapping $S(\cdot)C$ is stochastically integrable.
\end{enumerate}
In this situation the solution $(Y(t):\,t\in [0,T])$ can be represented by
\begin{equation}\label{eq.varofcons}
Y(t) = S(t) y_0+ \int_0^t S(t-s) C\, \ud B(s)\qquad\text{for all $t\in[ 0,T] $.}
\end{equation}
\end{theo}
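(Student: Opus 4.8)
The plan is to prove the two implications separately, with the representation formula \eqref{eq.varofcons} emerging naturally from the (b) $\Rightarrow$ (a) direction.

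First I would tackle (b) $\Rightarrow$ (a). Assuming $S(\cdot)C$ is stochastically integrable, I note that for each $t\in[0,T]$ the mapping $S(t-\cdot)C$ restricted to $[0,t]$ is in $\mathcal I$ and is stochastically integrable: indeed, for $v^\ast\in V^\ast$ one has $i^\ast C^\ast S^\ast(t-s)v^\ast$, and using the boundedness of the semigroup on $[0,T]$ together with Proposition \ref{pro.extrefinM} (parts (a) and (b), the time-reversal and truncation estimates) one shows $\norm{\1_{[0,t]}i^\ast C^\ast S^\ast(t-\cdot)v^\ast}_{\mathcal M}\le c_t\norm{i^\ast C^\ast S^\ast(\cdot)v^\ast}_{\mathcal M}$ uniformly, so Corollary \ref{cor.domintegrable} applies. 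Hence $Y(t):=S(t)y_0+\int_0^t S(t-s)C\,\ud B(s)$ is a well-defined $V$-valued random variable. It remains to verify that this process is a weak solution: one must check weak Bochner regularity, and that for $v^\ast\in\D(A^\ast)$ equation \eqref{eq Cauchy pb} holds $P$-a.s. For the latter I would test \eqref{eq.varofcons} against $A^\ast v^\ast$, use the identity $S(t)v^\ast = v^\ast + \int_0^t S^\ast(s)A^\ast v^\ast\,\ud s$ valid on $\D(A^\ast)$, and apply a stochastic Fubini argument to the double integral $\int_0^t\scapro{\int_0^s S(s-r)C\,\ud B(r)}{A^\ast v^\ast}\,\ud s$, reducing it (at the level of the cylindrical integrals, where everything is a genuine Wiener integral against the $b_k$) to $B(t)(C^\ast v^\ast) - \scapro{\int_0^t S(t-s)C\,\ud B(s)}{v^\ast}$. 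The weak Bochner regularity should follow from the continuity in $p$-th mean of the integral process established at the end of Section \ref{sc: integration}, together with a dominated-convergence / subsequence argument controlling $\int_0^T\abs{\scapro{Y(t)}{H_{n}(t)}}^2\,\ud t$ when $\sup_t\norm{H_n(t)}\to 0$.

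For (a) $\Rightarrow$ (b), suppose $Y$ is a weak solution. The idea is to recover the candidate integral from the solution via the variation-of-constants identity tested against smooth functionals. Fix $t\in[0,T]$ and $v^\ast\in\D(A^\ast)$; applying \eqref{eq Cauchy pb} with the time-dependent test functional $s\mapsto S^\ast(t-s)v^\ast$ and differentiating (formally) in the style of the classical argument — more precisely, integrating $\scapro{Y(s)}{A^\ast S^\ast(t-s)v^\ast}$ against $\ud s$ and using weak Bochner regularity to justify the manipulation — one obtains $\scapro{Y(t)}{v^\ast} = \scapro{S(t)y_0}{v^\ast} + (Z_{\1_{[0,t]}S(t-\cdot)C})v^\ast$ for all $v^\ast$ in the $\sigma$-weakly dense set $\D(A^\ast)$, and by continuity of both sides in $v^\ast$ (the cylindrical integral is continuous by Proposition \ref{prop integral well def and isometry}) for all $v^\ast\in V^\ast$. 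Thus the cylindrical random variable $Z_{\1_{[0,t]}S(t-\cdot)C}$ is induced by the genuine $V$-valued random variable $Y(t)-S(t)y_0$, i.e.\ $\1_{[0,t]}S(t-\cdot)C$ is stochastically integrable. Taking $t=T$ and using the reversal identity from Proposition \ref{pro.extrefinM}(b) once more to pass from $\1_{[0,T]}S(T-\cdot)C$ back to $S(\cdot)C$, Corollary \ref{cor.domintegrable} yields that $S(\cdot)C$ itself is stochastically integrable.

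The main obstacle I expect is the stochastic Fubini step and the rigorous justification of the time-dependent test-function manipulation in the (a) $\Rightarrow$ (b) direction: since $\D(A^\ast)$ need not be norm-dense in $V^\ast$ (only $\sigma(V^\ast,V)$-dense when $A$ generates a $C_0$-semigroup on a non-reflexive $V$), and since weak Bochner regularity is exactly the hypothesis designed to make $\int_0^t\scapro{Y(s)}{H(s)}\,\ud s$ meaningful for continuous $V^\ast$-valued $H$, care is needed to show that $s\mapsto\scapro{Y(s)}{A^\ast S^\ast(t-s)v^\ast}$ is well-defined and that the Fubini interchange is licit; this is where the arguments of \cite{JanLutz2005} (Theorem 3.2 and its proof) will have to be adapted, replacing the Wiener integral there by our cylindrical fBm integral and replacing the isometry into $L^2$ by the composition $K^\ast$ into $L^2([0,T];X)$ as encoded in Proposition \ref{prop integral well def and isometry}.
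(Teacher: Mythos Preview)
Your plan for (b) $\Rightarrow$ (a) is essentially the paper's argument: Proposition \ref{pro.extrefinM} plus Corollary \ref{cor.domintegrable} give integrability of $\1_{[0,t]}S(t-\cdot)C$, a stochastic Fubini for the real-valued fBms $b_k$ verifies \eqref{eq Cauchy pb}, and weak Bochner regularity is obtained from a uniform bound $\sup_{t\in[0,T]}\norm{\Gamma_{\Psi_t}^\ast}_{V^\ast\to L^2}<\infty$ (the paper makes this explicit via the uniform boundedness principle, but your sketch is compatible).

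There is, however, a genuine gap in your (a) $\Rightarrow$ (b). You obtain $\scapro{Y(T)}{v^\ast}=Z_{\Psi}v^\ast$ for $v^\ast$ in $\D(A^\ast)$ (the paper uses $\D(A^{\odot})$, which is weak$^\ast$-sequentially dense), and then write ``by continuity of both sides in $v^\ast$\,\ldots\ for all $v^\ast\in V^\ast$''. This step fails as written: $\D(A^\ast)$ is in general only weak$^\ast$-dense in $V^\ast$, while the cylindrical integral $Z_\Psi:V^\ast\to L^2_P(\Omega;\R)$ is only \emph{norm}-continuous (Proposition \ref{prop integral well def and isometry}). There is no a priori reason for $v^\ast\mapsto Z_\Psi v^\ast$ to be weak$^\ast$-sequentially continuous, so you cannot pass to the limit directly. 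You flag the density problem in your obstacle paragraph, but you locate the difficulty in the Fubini step rather than in this extension step, and weak Bochner regularity does not help here.

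The paper resolves this with a covariance-comparison argument (following \cite[Th.~2.3]{JanLutz2005}) that you have not outlined. First one observes that $Y(T)$ is Gaussian, since its one-dimensional projections are Gaussian on a weak$^\ast$-dense set and Gaussian laws are closed under weak limits; let $R=jj^\ast$ be its covariance with $j:H\to V$ $\gamma$-radonifying. On $\D(A^{\odot})$ one has $\scapro{Rv^\ast}{v^\ast}=\scapro{Qv^\ast}{v^\ast}$ where $Q=\Gamma_\Psi\Gamma_\Psi^\ast$. Given an arbitrary $v^\ast\in V^\ast$, pick $v_n^\ast\in\D(A^{\odot})$ with $v_n^\ast\to v^\ast$ weak$^\ast$; then $j^\ast v_n^\ast\to j^\ast v^\ast$ weakly in $H$, and a Hahn--Banach argument produces convex combinations $w_n^\ast$ with $j^\ast w_n^\ast\to j^\ast v^\ast$ \emph{strongly} in $H$ while still $w_n^\ast\to v^\ast$ weak$^\ast$. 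The point is that now
\[
\norm{i^\ast C^\ast S^\ast(T-\cdot)(w_m^\ast-w_n^\ast)}_{\mathcal M}^2
=\scapro{Q(w_m^\ast-w_n^\ast)}{w_m^\ast-w_n^\ast}
=\scapro{R(w_m^\ast-w_n^\ast)}{w_m^\ast-w_n^\ast}
=\norm{j^\ast(w_m^\ast-w_n^\ast)}_H^2\to 0,
\]
so $\big(i^\ast C^\ast S^\ast(T-\cdot)w_n^\ast\big)$ is Cauchy in $\mathcal M$. A pointwise-a.e.\ subsequence argument (as in Proposition \ref{prop continuity of T:V* to M}) identifies the limit as $i^\ast C^\ast S^\ast(T-\cdot)v^\ast$, whence $\scapro{Qv^\ast}{v^\ast}=\lim_k\scapro{Rw_{n_k}^\ast}{w_{n_k}^\ast}=\scapro{Rv^\ast}{v^\ast}$. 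Thus $Q=R$ on all of $V^\ast$, and the cylindrical distribution of $Z_\Psi$ extends to the Radon law of $Y(T)$. This is the missing idea in your plan.
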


\begin{proof}
(b) $\Rightarrow$ (a):
Proposition \ref{pro.extrefinM} guarantees for each $t\in [0,T]$ that the mapping $\1_{[0,t]}  S(t-\cdot)C$ is in $\mathcal I$ and that there exists a constant $c_t>0$ such that
\begin{align*}
  \norm{\1_{[0,t]}i^\ast C^\ast S^\ast(t-\cdot)v^\ast}_{\mathcal M}\le
   c_t\norm{i^\ast C^\ast S^\ast(\cdot)v^\ast}_{\mathcal M}\qquad\text{for all }
     v^\ast\in V^\ast.
\end{align*}
 Thus Corollary \ref{cor.domintegrable} guarantees that
$\1_{[0,t]}  S(t-\cdot)C$ is stochastically integrable, which enables to define the
 stochastic integral
\begin{align*}
  X(t):= \int_0^t S(t-s) C \,\ud B(s)\qquad\text{for all }t\in [0,T].
\end{align*}
It follows from representation \eqref{equation def cylindrical integral} that
the real valued stochastic process $(\scapro{X(t)}{v^\ast}:\,t\in [0,T])$ is
adapted for each $v^\ast\in V^\ast$. Pettis' measurability theorem implies that
$X:=(X(t):\,t\in [0,T])$ is adapted.

By linearity we can assume that $y_0=0$.
The stochastic Fubini theorem for real valued fBm implies for each $v^*\in \D(A^\ast)$ and $t\in [0,T]$, that
\begin{align*}
 \int_0^t \scapro{X(s)}{A^\ast v^*}\, \ud s
 &=  \sum_{k=1}^\infty  \int_0^t  \int_0^s \langle S(s-r)C i e_k, A^*v^* \rangle \,\ud  b_k(r)\,\ud s \nonumber\\
 &= \sum_{k=1}^\infty \int_0^t   \int_r^t \langle S(s-r)C i e_k, A^*v^* \rangle \,\ud s\,\ud  b_k(r)\nonumber\\
 &= \sum_{k=1}^\infty \int_0^t  \langle S(t-r)C i e_k - C i e_k  ,v^* \rangle \,\ud b_k(r)\nonumber\\
 &=  \scapro{ \int_0^t    S(t-r)C  \,\ud B(r) }{ v^*} -\sum_{k=1}^\infty    \langle i e_k  ,C^* v^* \rangle b_k(t)\nonumber \\
 &= \scapro{X(t)}{ v^*} -    B(t) (C^*v^*),
 \end{align*}
which shows that the process $X$ satisfies \eqref{eq Cauchy pb}. In order to show that $X$ is weakly Bochner regular define $\Psi_t:=\1_{[0,t]}(\cdot) S(t-\cdot)C$ for each $t\in [0,T]$. Note that Proposition \ref{pro.extrefinM} guarantees that there exists a constant $c_t>0$ such that
\begin{align*}
\norm{\Gamma_{\Psi_t}^\ast v^\ast}_{L^2}
=\norm{\1_{[0,t]}(\cdot) i^\ast C^\ast S^\ast(t-\cdot)v^\ast}_{\mathcal M}
\le  c_t \norm{i^\ast C^\ast S^\ast(\cdot)v^\ast}_{\mathcal M}
\end{align*}
for every $v^\ast\in V^\ast$. Since the derivation of the constant $c_t$ in \cite[Ch.13.3]{samko_et.al.93} shows that $\sup_{t\in [0,T]} c_t<\infty$, the uniform boundedness principle implies that
$\sup_{t\in [0,T]}\norm{\Gamma_{\Psi_t}^\ast}_{V^\ast\to L^2}<\infty$.
Thus for a sequence $(H_n)_{n\in\N}$ of continuous mappings
$H_n:[0,T]\to V^\ast$ we obtain
\begin{align*}
E\left[\abs{\int_0^T \scapro{X(t)}{H_n(t)}\,\ud t}^2\right]
&\le T\int_0^T E\left[\abs{Z_{\Psi_t} H_n(t)}^2\right]\,\ud t \\
&= T\int_0^T \norm{\Gamma_{\Psi_t}^\ast H_n(t)}^2\,\ud t\\
&\le T^2 \sup_{t\in [0,T]}\norm{\Gamma_{\Psi_t}^\ast}^2_{V^\ast\to L^2}
 \sup_{t\in [0,T]}\norm{H_n(t)}^2,
\end{align*}
which shows the weak Bochner regularity.

(a) $\Rightarrow$ (b): by applying It\^{o}'s formula for real valued fBm, see e.g.~\cite[Thm 6.3.1]{biagini_et.al.08}, one deduces
for every continuously differentiable function $f:[0,T]\to \R$ and real valued fBm $b$
\begin{equation}\label{eq: integr by parts fBm}
\int_0^{T} f'(s) b(s) \,\ud s = f(T)b(T) - \int_0^{T} f(s) \, \ud b(s)
\qquad\text{$P$-a.s.,}
\end{equation}
where the integral on the right-hand side can be understood as a Wiener integral, since $f$ is deterministic.
Let $Y$ be a weak solution of \eqref{equation stoch Cauchy problem} and denote by $A^{\odot}$
the part of $A^\ast$ in $\overline{\D(A^\ast)}$. Then $\D(A^{\odot})$ is a weak$^\ast$-sequentially
dense subspace of $V^\ast$.  From the integration by parts formula
\eqref{eq: integr by parts fBm} it follows as in the proof of Theorem 7.1 in \cite{JanLutz2005}
that
\begin{align}\label{eq.YandconvInt-dense}
 \scapro{Y(T)}{v^*}
=  Z_\Psi v^*
\qquad\text{for all }v^\ast\in \D(A^{\odot}),
\end{align}
where $Z_\Psi$ denotes the cylindrical integral of $\Psi:=S(T-\cdot)C$.
It remains to show that \eqref{eq.YandconvInt-dense} holds for all $v^\ast\in V^\ast$, for which we mainly follow  the arguments of the proof of Theorem 2.3 in \cite{JanLutz2005}.  Observe that the random variable $Y(T)$ is Gaussian since the right hand side in \eqref{eq.YandconvInt-dense} is Gaussian for each $v^\ast\in \D(A^{\odot})$ and Gaussian distributions are closed under weak limits. Let  $R:V^\ast\to V$ and $Q:V^\ast\to V^{\ast\ast}$ denote the covariance operators of $Y(T)$ and   $Z_{\Psi}$, respectively. Since $R$ is the covariance operator of a Gaussian measure there exists
a Hilbert space $H$ which is continuously embedded by a $\gamma$-radonifying mapping $j:H\to V$ such that $R=jj^\ast$. Equality \eqref{eq.YandconvInt-dense} implies
\begin{align}\label{eq.RandQdense}
  \scapro{R v^*}{v^*} = \scapro{Q v^*}{v^*}
  \qquad\text{for all }v^\ast\in \D(A^{\odot}).
\end{align}
Let  $(v_n^*)_{n\in\N}$ be a sequence in $\D(A^{\odot})$ converging weakly$^\ast$ to $v^*$ in $V^*$. Thus $\lim_{n\to\infty}j^*v_n^*=j^*v^*$ weakly in $H$ since $H$ is a Hilbert space and $j^*$ is weak$^*$ continuous. As a consequence of the Hahn-Banach theorem one can construct a convex combination $w_n^*$ of the $v^*_n$ such that $\lim_{n\to\infty}j^*w_n^*=j^*v^*$ strongly in $H$ and $\lim_{n\to\infty}w_n^*=v^*$ weakly$^*$ in $V^*$. Since
$w_m^\ast-w_n^\ast$ is in $\D(A^{\odot})$ for all $m,n\in\N$, inequality \eqref{eq.RandQdense} implies
\begin{align*}
\norm{i^*C^*S^*(T-\cdot)(w_m^*-w_n^*)}_{\mathcal M}^2 &= \norm{K^* \left( i^*C^*S^*(T-\cdot)(w_m^*-w_n^*)\right)}_{L^2}^2\\
&=  \scapro{Q (w_m^*-w_n^*)}{w_m^*-w_n^*} \\
&=  \scapro{R(w_m^*-w_n^*)}{w_m^*-w_n^*} \\
&=  \norm{j^* (w_m^*-w_n^*)}_H \to 0 \quad\text{as }m,n\to \infty.
\end{align*}
Thus, $\big(i^*C^*S^*(T-\cdot)w_n^*\big)_{n\in\N}$ is a Cauchy sequence in $\mathcal M$ and therefore  it converges to some $g\in \mathcal M$. By the same arguments as in the proof of Proposition \ref{prop continuity of T:V* to M} it follows that there is a subsequence such that
$\lim_{k\to\infty} i^*C^*S^*(T-s)w_{n_k}^*= g(s)$ for Lebesgue almost all $s\in [0,T]$. On the other hand, the weak$^\ast$ convergence of $(w_{n_k}^\ast)_{k\in\N}$ implies that
$\lim_{k\to\infty}\scapro{ i^*C^*S^*(T-s)w_{n_k}^*}{x}=\scapro{ i^*C^*S^*(T-s)v^*}{x}$ for all $x\in X$ and
$s\in [0,T]$, which yields $g=i^*C^*S^*(T-\cdot)v^*$. It follows that
\begin{align*}
 \scapro{R w_{n_k}^*}{w_{n_k}^*}
=  \norm{i^*C^*S^*(t-\cdot)w_{n_k}^*}_{\mathcal M}^2
\to \norm{i^*C^*S^*(t-\cdot)v^*}_{\mathcal M}^2
= \scapro{Q v^*}{v^*},
\end{align*}
as $k\to\infty$. Therefore the covariance operators $R$ and $Q$ coincide on $V^\ast$, which yields that the cylindrical distribution of $Z_{\Psi}$ extends to a Radon measure.
\end{proof}

If $H>\tfrac{1}{2}$ then the first condition in Theorem \ref{th.existencesolution}, i.e.~$S(\cdot)C\in \mathcal I$, is satisfied for every strongly continuous semigroup as $L^2([0,T];X)\subseteq \M$. If $H<\frac{1}{2}$
this condition is not obvious but an important case is covered by the following result.
\begin{prop}\label{prop integrability of S(t)}
Let $H<\frac{1}{2}$. If $(S(t), t\ge 0)$ is an  analytic semigroup of negative type, then the mapping $S(\cdot)C$ is in $\mathcal I$.
\end{prop}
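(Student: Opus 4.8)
The plan is to use Proposition~\ref{prop.eqMandH} to turn the statement into a regularity question about a concrete $X$-valued function. Since $H<\tfrac12$ we have $\M=H_{T-}^{1/2-H}([0,T];X)=I_{T-}^{1/2-H}\big(L^2([0,T];X)\big)$, so by the definition of $\mathcal I$ it suffices to fix $v^\ast\in V^\ast$ and prove that
\[
  g(t):=i^\ast C^\ast S^\ast(t)v^\ast=i^\ast\big(S(t)C\big)^\ast v^\ast,\qquad t\in[0,T],
\]
belongs to $H^{1/2-H}_{T-}([0,T];X)$. Write $\alpha:=\tfrac12-H\in(0,\tfrac12)$.

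First I would extract the two pieces of regularity of $g$ supplied by the hypotheses. Negative type gives $M:=\sup_{t\in[0,T]}\norm{S(t)}<\infty$, so $g$ is bounded on $[0,T]$, hence in $L^1([0,T];X)\cap L^2([0,T];X)$. Analyticity gives that $t\mapsto S(t)$ is differentiable on $(0,\infty)$ with $\tfrac{\ud}{\ud t}S(t)=AS(t)$ and $\norm{AS(t)}\le c_0/t$ on $(0,T]$. Estimating through the operator norm $\norm{S(t)-S(s)}$ --- so that the failure of strong continuity of the adjoint semigroup $S^\ast$ plays no role --- this yields, for $0<s\le t\le T$,
\[
  \norm{g(t)-g(s)}\le \norm{i^\ast}\,\norm{C^\ast}\,\norm{v^\ast}\int_s^t\norm{AS(r)}\,\ud r\le c_1\log\tfrac{t}{s}\le c_1\,\tfrac{t-s}{s},
\]
which, combined with the trivial bound $\norm{g(t)-g(s)}\le 2\norm{i^\ast}\norm{C^\ast}\norm{v^\ast}M$, produces a constant $c_2>0$ with $\norm{g(t)-g(s)}\le c_2\min\!\big(\tfrac{t-s}{s},1\big)$ for all $0<s\le t\le T$.

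Next I would insert this bound into the right-hand side of the formula defining $D^{\alpha}_{T-}$ applied to $g$, namely $\frac{1}{\Gamma(1-\alpha)}\big(\frac{g(t)}{(T-t)^{\alpha}}+\alpha\int_t^{T}\frac{g(t)-g(s)}{(s-t)^{\alpha+1}}\,\ud s\big)$. The first summand is bounded by $c\,(T-t)^{-\alpha}$; for the second, the substitution $u=s-t$ and the estimate on $g$ give $c_2\int_0^{T-t}u^{-\alpha-1}\min(u/t,1)\,\ud u\le c_3\,t^{-\alpha}$, so the whole expression is dominated by $c_4\big(t^{-\alpha}+(T-t)^{-\alpha}\big)$ for a.e.\ $t\in(0,T)$. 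This is exactly where $H<\tfrac12$ is used decisively: since $2\alpha=1-2H<1$, this dominating function lies in $L^2([0,T])$ --- both the singularity at $0$, produced by the analyticity bound $\norm{AS(t)}\le c_0/t$, and the one at $T$, produced by the fractional operator $D^\alpha_{T-}$, being square integrable. Hence the corresponding truncated Marchaud derivatives converge in $L^2([0,T];X)$ (dominated convergence), i.e.\ $D^\alpha_{T-}g$ exists in $L^2([0,T];X)$; by the characterisation of the range of $I^\alpha_{T-}$ on $L^2$ in \cite[Ch.~13]{samko_et.al.93}, valid because $\alpha<\tfrac12$, it follows that $g\in H^{1/2-H}_{T-}([0,T];X)=\M$. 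As $v^\ast\in V^\ast$ was arbitrary, $S(\cdot)C\in\mathcal I$.

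The two integral estimates in the third paragraph are routine. The points I expect to require care are (i) keeping the argument clean by running the H\"older-type bound through operator norms rather than through the merely weak$^\ast$-continuous adjoint semigroup $S^\ast$, and (ii) citing precisely the result from \cite{samko_et.al.93} that converts ``the Marchaud derivative of $g$ belongs to $L^2$'' into ``$g$ lies in the range of $I^\alpha_{T-}$ on $L^2$''.
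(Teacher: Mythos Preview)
Your argument is correct and follows the same strategy as the paper: use the Hölder-type regularity of the analytic semigroup to show that the relevant fractional derivative of $t\mapsto i^\ast C^\ast S^\ast(t)v^\ast$ lies in $L^2([0,T];X)$. The packaging differs only in two minor respects. First, the paper works directly with the operator $K^\ast$ (via a weighted Marchaud-type formula $G_f$ coming from \eqref{eq.KastH<0.5}), whereas you pass through Proposition~\ref{prop.eqMandH} and estimate $D^{1/2-H}_{T-}g$; this spares you the extra weight term $\int_0^T\norm{f(s)}^2 s^{2H-1}\,\ud s$ that appears in the paper's decomposition, at the cost of invoking the $L^2$-characterisation of $I^\alpha_{T-}(L^2)$ from \cite{samko_et.al.93}. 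Second, the paper uses the interpolation estimate $\norm{S(t)-S(s)}\le c(t-s)^\alpha s^{-\alpha}$ with a fixed exponent $\alpha\in(\tfrac12-H,\tfrac12)$, while you use the sharper bound $\norm{g(t)-g(s)}\le c\min\!\big(\tfrac{t-s}{s},1\big)$ and integrate it directly against the Marchaud kernel; both lead to the same $t^{-(1/2-H)}$ singularity and the same square-integrability conclusion.
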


\begin{proof}
Define for arbitrary $f\in L^2([0,T];X)$
the function
\begin{align}\label{eq.defG}
 & G_f(s)\\
 &:=  b_H  \left(\frac{f(s)}{(T- s)^{\frac12-H}} + \left(\frac12-H\right) s^{\frac12-H} \int_s^T \frac{s^{H-\frac12}f(s)-t^{H-\frac12}f(t)}{(t-s)^{\frac32-H}} \,\ud t \right) \notag
\end{align}
for all $s\in (0,T]$. It follows from \eqref{eq.KastH<0.5} that a function $f\in L^2([0,T];X)$ is in $\mathcal M$ if and only
$G_f\in L^2([0,T];X)$, in which case $\norm{f}_{\mathcal M} =\norm{G_f}_{L^2}$. By the same computations as in the proof of \cite[Le.11.7]{pasik-duncan_et.al.06} one derives that there exists a constant $c_1>0$ such that
\begin{align}\label{eq.K*bound}
\int_0^T  \|G_f(s)\|^2 \,\ud s \nonumber
&\le  c_1\Bigg( \int_0^T \frac{\|f(s)\|^2}{(T- s)^{1-2H}} \,\ud s  +  \int_0^T   \frac{ \|f(s)\|^2}{s^{1-2H}} \, \ud s \\
 &\qquad\qquad  + \int_0^T \left(\int_s^T \frac{\|f(t)-f(s)\|}{(t-s)^{\frac32-H}}\, \ud t \right)^2  \, \ud s \Bigg).
\end{align}
We check that each term on the right hand side of \eqref{eq.K*bound} is finite for $f=i^* C^* S^*(\cdot)v^*$, $v^*\in V^*$. The growth bound of the semigroup guarantees
that there exist some constants $\beta,c_2>0$ such that
\begin{align*}
  \norm{S(s)}\le c_2 e^{-\beta s}
  \qquad\text{for  all } s\in [0,T].
\end{align*}
It is immediate that the first two integrals on the right hand
side in  \eqref{eq.K*bound} are finite since $1-2H<1$.
In order to estimate the last term, recall that, as $S$ is  analytic, there exists
for each $\alpha \ge 0$ a constant $c_3>0$ such that for every $0<s\le t$ we have
\[
\|S(t)- S(s)\| =\|(S(t-s)-\mathrm{Id} )S(s)\| \le c_3 (t-s)^\alpha s^{-\alpha} \mathrm e^{-\beta s}.
\]
Fix some $\alpha \in (\tfrac{1}{2}-H,\tfrac{1}{2})$.
The third summand on the right hand side of \eqref{eq.K*bound}
can be estimated by
\begin{align*}
\int_0^T & \left( \int_s^T \frac{\|i^*C^*S^*(t)v^*- i^*C^*S^*(s)v^*\|}{(t-s)^{\frac32-H}} \,\ud t \right)^2\,  \ud s\\
& \le  (c_3 \norm{i}\norm{C}\norm{v^\ast})^2\int_0^T \left(\int_s^T \frac{ \mathrm e^{-\beta s}}{s^{\alpha}} \frac{(t-s)^\alpha}{(t-s)^{\frac32-H}} \,\ud t \right)^2\,  \ud s\\
& \le (c_3 \norm{i}\norm{C}\norm{v^\ast})^2 \left( \int_0^T \frac{ \mathrm e^{-2\beta s}}{s^{2\alpha}} \, \ud s \right)\left(  \int_0^T \frac{ 1}{t^{\frac32 -H -\alpha}} \, \ud t \right)^2\\
&\le  (c_3 \norm{i}\norm{C}\norm{v^\ast})^2 (2\beta)^{2\alpha-1}\Gamma(1-2\alpha)
 T^{2(H+\alpha-\frac12 )},
\end{align*}
which completes the proof.
\end{proof}

Another example of a semigroup satisfying $S(\cdot)C$ in $\mathcal I$ is considered in Section
\ref{sc: application}. Further examples can be derived using the known fact that
the space of H{\"o}lder continuous functions of index larger than $\tfrac{1}{2}-H$ is continuously
embedded in $\mathcal M$.

\begin{example}
  If $V$ is a Hilbert space then a function $\Psi\in {\mathcal I}$ is stochastically integrable if and
  only if $\Gamma_\Psi$ is Hilbert-Schmidt, according to Theorem \ref{theo I_t honest process}. Thus,
  if $(f_k)_{k\in\N}$ denotes an orthonormal basis of $V$, the function $\Psi$ is stochastically
  integrable if and only if the adjoint operator $\Gamma^\ast_{\Psi}$ is Hilbert-Schmidt, that is
  \begin{align*}
    \sum_{k=1}^\infty \norm{\Gamma^\ast_{\Psi} f_k}^2_{L^2}
    =\sum_{k=1}^\infty \norm{K^\ast(i^\ast \Psi^\ast (\cdot)f_k)}^2_{L^2}
    =\sum_{k=1}^\infty \norm{i^\ast \Psi^\ast(\cdot)f_k}_{\mathcal M}^2<\infty.
  \end{align*}
  In the case $H>\tfrac{1}{2}$ we obtain that there exists a weak solution of \eqref{equation stoch Cauchy problem}  if
  \begin{align*}
 & \sum_{k=1}^\infty \int_0^T \int_0^T \norm{i^\ast C^\ast S^\ast (s) f_k} \norm{i^\ast C^\ast S^\ast (t)f_k}\abs{s-t}^{2H-2}\,\ud s \,\ud t <\infty.
  \end{align*}
For the case $H<\tfrac{1}{2}$ assume that the semigroup $(S(t),t\ge 0)$ is analytic and of negative type. A similar calculation as in the proof of Proposition \ref{prop integrability of S(t)} shows
that if there exists a constant $\alpha\in (\tfrac{1}{2}-H,\tfrac{1}{2})$ such
\begin{align*}
  \int_0^T \frac{\norm{S(s)Ci}_{HS}^2}{s^{2\alpha}}\,\ud s <\infty,
\end{align*}
then $S(\cdot)C$ is stochastically integrable. Here $\norm{\cdot}_{HS}$ denotes the Hilbert-Schmidt norm of an
operator $U:L^2([0,T];X)\to V$.

\end{example}

\section{Example: the stochastic heat equation}\label{sc: application}

As an example we consider a self-adjoint generator $A$ of a semigroup $(S(t),t\ge 0)$ in a separable Hilbert space $V$ such that there exists an orthonormal basis $(e_k)_{k\in\N}$ of $V$ satisfying $A e_k=-\lambda_k e_k$ for some $\lambda_k>0$ for all $k\in\N$ and $\lambda_k\to \infty$ as $k\to\infty$.  Thus the semigroup  satisfies
 \begin{align*}
   S(t)e_k=e^{-\lambda_k t} e_k \qquad\text{for all $t\ge 0$ and $k\in\N$.}
 \end{align*}
 A specific instance is the Laplace operator with Dirichlet boundary conditions
on $L^2(D;\R)$ for a set $D\in {\mathcal B}(\R^n)$.
We assume that $C=\id$ and we identify the dual space $V^\ast$ with $V$, i.e.
we consider the Cauchy problem
\begin{align}\label{eq.Cauchy-self-adjoint}
  \ud Y(t)= AY(t)\,\ud t + \ud B(t)
  \qquad\text{for all }t\in [0,T].
\end{align}
The system \eqref{eq.Cauchy-self-adjoint} is perturbed by a cylindrical fBm $B$ in $V$ which is independent along the orthonormal basis $(e_k)_{k\in\N}$ of eigenvectors $e_k$ of $A$,
that is we consider the cylindrical fBm $(B(t):\,t\ge 0)$ in $V$ from Example
\ref{ex.fBMheatequation}:
\begin{align*}
  B(t)v= \sum_{k=1}^\infty  \scapro{ie_k}{v} b_k(t)\qquad\text{for all }v\in V,
  \, t\ge 0,
\end{align*}
where $(b_k)_{k\in\N}$ is  a sequence of independent, real valued standard fBms of Hurst parameter $H\in (0,1)$ and the embedding $i:V\to V$ is defined by
\begin{align*}
  iv=\sum_{k=1}^\infty q_k \scapro{e_k}{v}e_k
\end{align*}
for a sequence $(q_k)_{k\in\N}\subseteq \R$  satisfying $\sup_k \abs{q_k}<\infty$. Note
that in this case $X=V$.

\begin{theo}\label{theorem classical solution in L^2}
Let $A$ be a self-adjoint generator satisfying the conditions described above.
If
\begin{align*}
  \sum_{k=1}^\infty \frac{q_k^2}{\lambda_k^{2H}}<\infty,
\end{align*}
then equation \eqref{eq.Cauchy-self-adjoint}  has a weak solution $(Y(t):\,t\in [0,T])$ in $V$. The solution can be represented by the variation of constants formula \eqref{eq.varofcons}.
\end{theo}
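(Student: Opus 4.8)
The plan is to verify the two conditions of Theorem~\ref{th.existencesolution} for the operator-valued map $\Psi:=S(\cdot)C=S(\cdot)$ (recall $C=\id$): first that $S(\cdot)\in\mathcal I$, and then that $S(\cdot)$ is stochastically integrable; the variation of constants representation \eqref{eq.varofcons} is then supplied by that theorem. The key structural simplification is that $A$, hence each $S(t)$, as well as the embedding $i$, are self-adjoint and diagonal in the basis $(e_k)$, so that
\begin{align*}
  i^\ast C^\ast S^\ast(t)v=iS(t)v=\sum_{k=1}^\infty q_k e^{-\lambda_k t}\scapro{e_k}{v}e_k
  \qquad\text{for all }v\in V,\ t\in[0,T].
\end{align*}
Together with Proposition~\ref{prop link operator K and K} (which, via \eqref{equation isometry wiener integral Hilbert}, gives $\norm{x\,f}_{\mathcal M_V}=\norm{x}_V\norm{f}_{\mathcal M_\R}$ for $x\in V$, $f\in\mathcal M_\R$) this reduces everything to one-dimensional estimates for the exponentials $t\mapsto e^{-\lambda_k t}$.

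The analytic heart of the proof will be the estimate that there is a constant $c_H>0$ such that $\norm{e^{-\lambda\,\cdot}}_{\mathcal M_\R}^2\le c_H\,\lambda^{-2H}$ for every $\lambda\ge\lambda_\ast:=\inf_k\lambda_k$ (and $\lambda_\ast>0$ because the $\lambda_k$ are positive and tend to $\infty$). For $H>\tfrac12$ this is immediate from \eqref{eq.scalarMH>0.5}: one has $\norm{e^{-\lambda\,\cdot}}_{\mathcal M_\R}^2=H(2H-1)\int_0^T\int_0^T e^{-\lambda s}e^{-\lambda t}\abs{s-t}^{2H-2}\,\ud s\,\ud t$, and the substitution $u=\lambda s$, $w=\lambda t$ pulls out a factor $\lambda^{-2H}$ leaving a double integral over $[0,\lambda T]^2$ dominated by the finite constant $\int_0^\infty\int_0^\infty e^{-u-w}\abs{u-w}^{2H-2}\,\ud u\,\ud w$. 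For $H<\tfrac12$ I would invoke Proposition~\ref{prop.eqMandH} to replace $\norm{\cdot}_{\mathcal M}$ by the equivalent norm $\norm{D_{T-}^{1/2-H}(\cdot)}_{L^2}$, and estimate $D_{T-}^{1/2-H}e^{-\lambda\,\cdot}$ by hand: writing $e^{-\lambda t}-e^{-\lambda s}=e^{-\lambda t}(1-e^{-\lambda(s-t)})$ and bounding the inner integral by $\lambda^{1/2-H}\int_0^\infty\tfrac{1-e^{-v}}{v^{3/2-H}}\,\ud v<\infty$ leads to $\abs{(D_{T-}^{1/2-H}e^{-\lambda\,\cdot})(t)}\lesssim (T-t)^{-(1/2-H)}e^{-\lambda t}+\lambda^{1/2-H}e^{-\lambda t}$, whose squared $L^2([0,T])$-norm is $\lesssim \lambda^{-1}+e^{-\lambda T}+\lambda^{2(1/2-H)-1}\lesssim\lambda^{-2H}$ for $\lambda\ge\lambda_\ast$, the exponent identity $2(\tfrac12-H)-1=-2H$ producing the correct rate. (Alternatively, self-similarity of fractional Brownian motion yields $\norm{e^{-\lambda\,\cdot}}_{\mathcal M_\R([0,T])}^2=\lambda^{-2H}\norm{e^{-\cdot}}_{\mathcal M_\R([0,\lambda T])}^2$, reducing matters to uniform boundedness of the truncated norms of $e^{-\cdot}$, equivalently finiteness of the variance of the stationary fractional Ornstein--Uhlenbeck process.)

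Granted this estimate, both conditions follow quickly. For $S(\cdot)\in\mathcal I$: when $H>\tfrac12$, $t\mapsto iS(t)v$ is continuous, hence lies in $L^2([0,T];V)\subseteq\abs{\mathcal M}=\M$; when $H<\tfrac12$, the semigroup is analytic and of negative type --- it is a self-adjoint contraction semigroup with growth bound $-\lambda_\ast<0$ --- so Proposition~\ref{prop integrability of S(t)} gives $S(\cdot)C\in\mathcal I$. For stochastic integrability: since $V$ is a Hilbert space, $\gamma$-radonifying operators coincide with Hilbert--Schmidt operators and $\Gamma_{S(\cdot)}$ is automatically $V$-valued ($V^{\ast\ast}=V$), so by Theorem~\ref{theo I_t honest process} it suffices to check that $\Gamma_{S(\cdot)}^\ast\colon V\to L^2([0,T];V)$ is Hilbert--Schmidt. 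Using the orthonormal basis $(e_k)$, the isometry in Proposition~\ref{prop integral well def and isometry} (with \eqref{equation isometry wiener integral Hilbert}), and Proposition~\ref{prop link operator K and K}, one computes
\begin{align*}
  \sum_{k=1}^\infty\norm{\Gamma_{S(\cdot)}^\ast e_k}_{L^2}^2
  =\sum_{k=1}^\infty\norm{iS(\cdot)e_k}_{\mathcal M_V}^2
  =\sum_{k=1}^\infty q_k^2\norm{e^{-\lambda_k\,\cdot}}_{\mathcal M_\R}^2
  \le c_H\sum_{k=1}^\infty\frac{q_k^2}{\lambda_k^{2H}}<\infty,
\end{align*}
the last inequality being the hypothesis. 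Hence $\Gamma_{S(\cdot)}$ is Hilbert--Schmidt, $S(\cdot)C$ is stochastically integrable, and Theorem~\ref{th.existencesolution} yields the weak solution together with the representation \eqref{eq.varofcons}.

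The step I expect to be the main obstacle is the key estimate in the case $H<\tfrac12$, since there is no handy bilinear-form description of $\norm{\cdot}_{\mathcal M}$ as in the regime $H>\tfrac12$: one must either push through the fractional-derivative estimate above (being careful with the singularity of $(T-t)^{-(1/2-H)}$ near $t=T$ and with tracking the exponents so that the final rate is $\lambda^{-2H}$, not merely $\lambda^{-1}$), or argue via self-similarity, which replaces the computation by the need to show that $e^{-\cdot}$ is integrable against fractional Brownian motion on $[0,\infty)$ with uniformly bounded truncated $\mathcal M$-norms. Everything else is routine bookkeeping with the results of Sections~\ref{sc: wiener integrals for hilbert space valued fcs}--\ref{sc: cauchy problem}.
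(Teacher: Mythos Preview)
Your proposal is correct and follows essentially the same route as the paper: reduce to showing $S(\cdot)\in\mathcal I$ and that $\Gamma_{S(\cdot)}$ is Hilbert--Schmidt, exploit the diagonal structure to obtain $i^\ast S^\ast(t)e_k=q_k e^{-\lambda_k t}e_k$, and then prove the scalar bound $\norm{e^{-\lambda\,\cdot}}_{\mathcal M_\R}^2\lesssim\lambda^{-2H}$.

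The only notable difference is in the $H<\tfrac12$ estimate. The paper does not pass through Proposition~\ref{prop.eqMandH} and the operator $D_{T-}^{1/2-H}$; instead it reuses the three-term pointwise bound for $K^\ast f$ derived in the proof of Proposition~\ref{prop integrability of S(t)} (the inequality you would recognise as \eqref{eq.K*bound}), which carries an additional weighted term $\int_0^T\norm{f(s)}^2 s^{2H-1}\,\ud s$ coming from the factor $t^{1/2-H}$ in \eqref{eq.KastH<0.5}. Your approach via the equivalent $H_{T-}^{1/2-H}$-norm is marginally cleaner because it sidesteps that extra weight, and your scaling substitution for $H>\tfrac12$ is slicker than the paper's direct computation of the double integral; conversely, the paper's route has the small advantage of recycling an inequality already established, so no new norm equivalence is invoked. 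The ``difference'' term is handled identically in both arguments (bounding $\int_0^{T-t}\tfrac{1-e^{-\lambda u}}{u^{3/2-H}}\,\ud u$ by $\lambda^{1/2-H}\int_0^\infty\tfrac{1-e^{-v}}{v^{3/2-H}}\,\ud v$), and your treatment of the boundary term $(T-t)^{-(1/2-H)}e^{-\lambda t}$ by splitting $[0,T]$ near $t=T$ matches the paper's case distinction in \eqref{eq.aux231}.
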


\begin{proof}
Note that in this situation we have
\begin{equation}\label{eq.normi*S*e_k}
i^\ast S^*(t) e_k = q_k  e^{-\lambda_k t}e_k
\qquad\text{for each $k\in\N$ and $t\in [0,T]$.}
\end{equation}
According to Theorem \ref{theo I_t honest process} and Theorem \ref{th.existencesolution}
we have to establish that $S(\cdot)$ is in $\mathcal I$ and the operator $\Gamma: L^2([0,T];V)\to V$ defined by
  \begin{align*}
 \scapro{\Gamma f}{v}= \int_0^T \big[K^*\big( i^\ast S^\ast(\cdot)v\big)(s), f(s)\big]\,\ud s
 \qquad\text{for all } f\in L^2([0,T];V),\, v\in V
\end{align*}
 is $\gamma$-radonifying. Since $V$ is a separable Hilbert space, the operator $\Gamma$ is $\gamma$-radonifying if and only if
it is Hilbert-Schmidt.

If $H>\tfrac{1}{2}$ then  $S(\cdot)$ is in $\mathcal I$ and
equality \eqref{eq.normi*S*e_k} yields for each $k\in\N$
\begin{align}\label{eq.aux22}
\norm{i^*S^*(\cdot) e_k}_{\M}^2
&= H(2H-1) \int_0^T \int_0^T \norm{i^*S^*(t) e_k}\norm{i^*S^*(s) e_k} \abs{s-t}^{2H-2} \,\ud s \,\ud t\nonumber\\
&= H(2H-1) q_k^2 \int_0^T  \int_0^T   e^{-\lambda_k t}  e^{-\lambda_k s} \abs{s-t}^{2H-2} \, \ud s  \,\ud t.
\end{align}
The iterated integral can be estimated by
\begin{align}\label{eq.aux22a}
\int_0^T  e^{-\lambda_k t} \int_0^T e^{-\lambda_k s} \abs{s-t}^{2H-2} \, \ud s  \,\ud t
&=2 \int_0^T  e^{-\lambda_k t} \int_0^t e^{-\lambda_k s} \abs{s-t}^{2H-2} \, \ud s  \,\ud t\notag\\
&=2 \int_0^T e^{-2\lambda_k t}\int_0^t e^{\lambda_k s} s^{2H-2}\,\ud s\, \ud t\notag\\
&=2 \int_0^T e^{\lambda_k s} s^{2H-2}\int_s^T e^{-2\lambda_k t}\,\ud t\, \ud s\notag\\
&\le \frac{1}{ \lambda_k} \int_0^T  e^{-\lambda_k s} s^{2H-2}\,\ud s\notag\\
&\le \frac{1}{\lambda_k^{2H}} \Gamma(2H-1).
\end{align}
Since inequality \eqref{eq.KisoMmod} guarantees that there
exists a constant $c>0$ such that
\begin{align*}
\sum_{k=1}^\infty \norm{\Gamma^\ast e_k}^2_{L^2}
= \sum_{k=1}^\infty \norm{K^\ast\big(i^* S^*(\cdot) e_k\big)}^2_{L^2}
\le c\sum_{k=1}^\infty  \norm{i^* S^*(\cdot) e_k}_{\M}^2,
\end{align*}
we can conclude from \eqref{eq.aux22} and \eqref{eq.aux22a} that
 $\Gamma^\ast$ and thus $\Gamma$ are Hilbert-Schmidt operators.

If $H<\tfrac{1}{2}$ Proposition \ref{prop integrability of S(t)} guarantees  that
$S(\cdot)$ is in $\mathcal I$.
As in the proof of Proposition \ref{prop integrability of S(t)} it follows that there exists a constant $c_1>0$ such that for all $k\in\N$
\begin{align}\label{eq.aux23}
\norm{K^\ast(i^\ast S^\ast(\cdot)e_k)}_{L^2}^2
& \le c_1 \Bigg(\int_0^T \frac{\norm{i^* S^*(s) e_k}^2 }{(T-s)^{1-2H} } \,\ud s  + \int_0^T \frac{\norm{i^* S^*(s) e_k}^2 }{s^{1-2H} } \,\ud s  \nonumber\\
&\quad+ \int_0^T \left( \int_s^T \frac{\norm{i^* S^*(t) e_k-i^* S^*(s) e_k} }{(t-s)^{\frac32-H} } \mathrm d t \right)^2 \,\ud s\Bigg).
\end{align}
Equality \eqref{eq.normi*S*e_k} implies for the
first integral  the estimate
\begin{align}\label{eq.aux231}
 \int_0^T \frac{\norm{i^* S^*(s) e_k}^2 }{(T-s)^{1-2H} } \,\ud s
&\le    q_k^2\int_0^T \frac{e^{-2\lambda_ks}}{(T-s)^{1-2H}}\,\ud s\notag\\
&=\frac{q_k^2}{(2\lambda_k)^{2H}}\int_0^{2\lambda_k T}\frac{e^{-s}}{(2\lambda_k T-s)^{1-2H}}\,\ud s\notag\\
&\le\frac{q_k^2}{(2\lambda_k)^{2H}} \left(1+\frac{1}{2H}\right).
\end{align}
Here, the estimate of the integral follows from the fact that if $2\lambda_k T\le 1$ then
\begin{align*}
  \int_0^{2\lambda_k  T} \frac{e^{-s}}{(2\lambda_k  T-s)^{1-2H}}\,\ud s
  \le   \int_0^{2\lambda_k  T} \frac{1}{(2\lambda_k  T-s)^{1-2H}}\,\ud s\le \frac{1}{2H},
\end{align*}
and if $ 2\lambda_k T >1 $ then
\begin{align*}
  \int_0^{2\lambda_k T } \frac{e^{-s}}{(2\lambda_k  T-s)^{1-2H}}\,\ud s
&\le \int_0^{2\lambda_k T-1} e^{-s}\,\ud s +  \int_{2\lambda_k T-1}^{2\lambda_k T} (2\lambda_k T-s)^{2H-1}\,\ud s\\
&\le 1 + \frac{1}{2H}.
\end{align*}
The second integral in \eqref{eq.aux23} can be bounded by
\begin{align}\label{eq.aux232}
\int_0^T \frac{\norm{i^* S^*(s) e_k}^2 }{s^{1-2H} } \,\ud s
\le  q_k^2 \int_0^T \frac{e^{-2\lambda_ks}}{s^{1-2H}}\,\ud s
\le \Gamma(2H) \frac{q_k^2}{(2\lambda_k)^{2H}}.
\end{align}
Another application of equality \eqref{eq.normi*S*e_k} yields for the third term in \eqref{eq.aux23}
\begin{align}\label{eq.aux233}
&\int_0^T \left( \int_s^T \frac{\norm{i^* S^*(t) e_k -i^* S^*(s) e_k} }{(t-s)^{\frac32-H} } \,\ud t \right)^2 \,\ud s\notag\\
&\qquad\qquad=
q_k^2 \int_0^T \left( \int_s^T \frac{\abs{e^{-\lambda_k t}-e^{-\lambda_k s}}}{(t-s)^{\frac32-H} } \,\ud t \right)^2 \,\ud s\notag\\
&\qquad\qquad =
q_k^2 \int_0^T  e^{-2\lambda_k s} \left( \int_0^{T-s} \frac{1-e^{-\lambda_k t}}{t^{\frac32-H} } \,\ud t \right)^2 \,\ud s .
\end{align}
Applying the changes of variables $\lambda_k s=x$ and $\lambda_k t=y$ yields
\begin{align}\label{eq.aux233.1}
& \int_0^T  e^{-2\lambda_k s} \left( \int_0^{T-s} \frac{1-e^{-\lambda_k t}}{t^{\frac32-H} } \,\ud t \right)^2 \,\ud s\notag\\\
&\qquad\qquad = \frac{1}{\lambda_k^{2H}} \int_0^{\lambda_k T} e^{-2x} \left(\int_0^{\lambda_k T-x}   \frac{1-e^{-y}}{y^{\frac32-H}}\,\ud y\right)^2\, \ud x\notag\\
&\qquad\qquad = \frac{1}{\lambda_k^{2H}} \int_0^{\lambda_k T} e^{-2(\lambda_k T-x)} \left(\int_0^{x}  \frac{1-e^{-y}}{y^{\frac32-H}}\,\ud y\right)^2\, \ud x\notag\\
&\qquad\qquad \le \frac{1}{\lambda_k^{2H}}  c_2,
\end{align}
where $c_2>0$ denotes a constant only depending on $H$ but not on $\lambda_k$.
The finiteness of the constant $c_2$ and its independence of $\lambda_k$ follow from the
following three estimates:
\begin{align*}
  \int_0^{1} e^{-2(\lambda_k T-x)} \left(\int_0^{x}  \frac{1-e^{-y}}{y^{\frac32-H}}\,\ud y\right)^2\, \ud x
&  \le   \int_0^{1} \left(\int_0^{1} \frac{1-e^{-y}}{y^{\frac32-H}}\,\ud y\right)^2\, \ud x, \\
  \int_1^{\lambda_k T} e^{-2(\lambda_k T-x)} \left(\int_0^{1}  \frac{1-e^{-y}}{y^{\frac32-H}}\,\ud y\right)^2\, \ud x
 & \le  \frac{1}{(H+\tfrac12)^2} \int_1^{\lambda_k T} e^{-2(\lambda_k T-x)} \, \ud x,\\
  \int_1^{\lambda_k T} e^{-2(\lambda_k T-x)} \left(\int_1^{x}  \frac{1-e^{-y}}{y^{\frac32-H}}\,\ud y\right)^2\, \ud x
 & \le  \frac{1}{(H-\tfrac12)^2} \int_1^{\lambda_k T} e^{-2(\lambda_k T-x)} x^{2H-1} \ud x\\
  & \le  \frac{1}{(H-\tfrac12)^2} \int_1^{\lambda_k T} e^{-2(\lambda_k T-x)}\,  \ud x .
\end{align*}
By applying the estimates  \eqref{eq.aux231}--\eqref{eq.aux233.1} to \eqref{eq.aux23}, it follows that  there exists a constant $c_3>0$ such that
\begin{align*}
 \norm{\Gamma^\ast e_k}_{L^2}^2= \norm{K^\ast(i^\ast S^\ast(\cdot)e_k)}_{L^2}^2\le c_3 \frac{q_k^2}{\lambda_k^{2H}}
  \qquad\text{for all }k\in\N.
\end{align*}
As before we can conclude that $\Gamma$ is Hilbert-Schmidt.
\end{proof}

Consider now the special case of the heat equation with Dirichlet boundary conditions driven by a cylindrical fractional noise with independent components, that is with $Q=\id$. In this case $q_k\equiv 1$ and the eigenvalues of the Laplacian  behave like $\lambda_k\thicksim k^{2/n} $ so that the condition for the existence of a weak solution becomes the well known $n/4<H<1$. This result is in line with the literature, see for example \cite{brzezniak_et.al.12, duncan_et.al.02, maslowski_nualart03}.

\bibliographystyle{plain}
\bibliography{cylindrical-fBm-biblio}


\end{document}